\newtheorem{ithm}{Theorem}
\numberwithin{equation}{section}
\newtheorem{thm}{Theorem}[section]
\newtheorem{conj}[thm]{Conjecture}
\newtheorem{defn}[thm]{Definition}
\newtheorem{lem}[thm]{Lemma}
\newtheorem{prop}[thm]{Proposition}
\newtheorem{rmk}[thm]{Remark}
\newtheorem{cor}[thm]{Corollary}
\newtheorem{question}[thm]{Question}
\theoremstyle{remark}
\Crefname{thm}{Theorem}{Theorems}
\newtheorem{example}[thm]{Example}
\newcommand{\Hh}{\mathcal{H}}
\newcommand{\Sl}{\mathcal{L}}
\newcommand{\Sd}{\mathcal{S}}
\newcommand{\Sym}{\mathfrak{S}}
\newcommand{\res}{\operatorname{res}}
\newcommand{\un}{\mathcal{U}}
\newcommand{\st}{\mathcal{S}}
\newcommand{\std}{\mathcal{T}}
\renewcommand*\FXLayoutInline[3]{%
  {\@fxuseface{inline}
  \ignorespaces\noindent \ovalbox{\hspace{.01\textwidth} \begin{minipage}{.95\textwidth}
  	#3 \fxnotename{#1}: #2
  \end{minipage}\hspace{.01\textwidth}}}
  \newline}
\newcommand{\homh}{\operatorname{Hom}_{\Hh_n}}
\newcommand{\End}{\operatorname{End}}
\newcommand{\Ext}{\operatorname{Ext}}
\newcommand{\Endh}{\operatorname{End}_{\mathcal{H}_n}}
\newcommand{\Ind}[2]{\operatorname{Ind}_{#1}^{#2}}
\newcommand{\Res}[2]{\operatorname{Res}_{#2}^{#1}}
\newcommand{\hres}[2]{{}^{\mathcal{H}\!}\operatorname{Res}_{#2}^{#1}}
\newcommand{\hind}[2]{{}^{\mathcal{H}\!}\operatorname{Ind}_{#1}^{#2}}
\newcommand{\la}{\lambda}
\newcommand{\al}{\alpha}
\newcommand{\e}{e_n}
\newcommand{\eh}[1]{e^{\mathcal{H}}_{#1}}
\newcommand{\Ch}[1]{C_{\Hh_n}^{#1}}
\newcommand{\Csl}[1]{C_{\Sl}^{#1}}
\newcommand{\Csd}[1]{C_{\Sd}^{#1}}
\newcommand{\Lh}[1]{\overline{N}_{#1}}
\newcommand{\Lsd}[1]{\overline{N}_{\Sd}^{#1}}
\newcommand{\Lnm}{\Lambda_{n,m}} 
\newcommand{\Lpnm}{\Lambda_{n,m}^+}
\newcommand{\Sh}[1]{D^{#1}_{\Hh}}
\newcommand{\Ssd}[1]{D^{#1}_{\Sd}}
\newcommand{\Ssl}[1]{D^{#1}_{\Sl}}
\newcommand{\teeny}[1]{\mbox{\fontsize{4pt}{6pt}\selectfont $#1$}}
\newcommand{\oml}[1]{\Phi_{#1}} 
\newcommand{\Hom}{\operatorname{Hom}}
\newcommand{\F}[1]{\Sigma_{#1}}
\newcommand{\mmod}{\operatorname{-mod}}
\newcommand{\Simp}{\mathsf{Simp}}
\newcommand{\emm}[2]{\mathsf{e}(#1,#2)}
\newcommand{\K}{\Bbbk}
\newcommand{\Z}{\mathbb{Z}}
\newcommand{\LNX}[1]{LNX$_{{#1}}$\xspace}
\newcommand{\LnmB}{\Lambda_{n,m}^{B}}
\newcommand{\bweyl}[1]{W_{#1}}
\newcommand{\Anm}{B_n(m)}
\newcommand{\cO}{\mathcal{O}}
\newcommand{\C}{\mathbb{C}}
\newcommand{\KZ}{\mathsf{KZ}}
\newcommand{\pol}[1]{h_{#1}} 
\newcommand{\cell}[1]{W^{#1}} 
\title[Representation theory of Schur algebras in type B]{On the representation theory of\\ Schur algebras in type B}
\date{}
\author{Dinushi Munasinghe} 
\address[D.M.]{Department of Mathematics, University of Toronto 
Toronto, ON, Canada}
\email{dinushi.munasinghe@mail.utoronto.ca}
\author{Ben Webster}
\address[B.W.]{Department of Pure Mathematics, University of Waterloo \& Perimeter Institute for Theoretical Physics,
Waterloo, ON, Canada}
\email{ben.webster@uwaterloo.ca}
\begin{document}
\begin{abstract}
    We study the representation theory of the type B Schur algebra $\Sl^n(m)$ with unequal parameters introduced by Lai and Luo.  For generic values of $q,Q$, this algebra is semi-simple and Morita equivalent to the Hecke algebra, but for special values, its category of modules is more complicated.  We study this representation theory by comparison with the cyclotomic $q$-Schur algebra of Dipper, James and Mathas, and use this to construct a cellular algebra structure on $\Sl^n(m)$.

    This allows us to index the simple $\Sl^n(m)$-modules as a subset of the set of bipartitions of $n$.  For $m$ large, this will be all bipartitions of $n$ if and only if $\Sl^n(m)$ is quasi-hereditary, in which case, $\Sl^n(m)$ is Morita equivalent to the cyclotomic $q$-Schur algebra.  We prove a modified version of a conjecture of Lai, Nakano and Xiang giving the values of $(Q,q)$ where this holds: if $m$ is large and odd, $Q\neq -q^k$ for all $k$ satisfying $\frac{4-n}{2}\leq k<n$; if $m$ is large and even, $Q\neq -q^k$ for all $k$ satisfying $-n<k<n.$  We also prove two strengthenings of this result: an indexing of the simple modules when $q$ is not a root of unity, and a characterization of the quasi-hereditary blocks of $\Sl^n(m)$.
\end{abstract}
\maketitle
\ytableausetup{centertableaux}
\section{Introduction}

A fundamental object in representation theory is the Schur algebra, which arises as the algebra of $\Sym_n$-invariant endomorphisms of $V^{\otimes n}$ via Schur--Weyl duality. In 1986, Jimbo generalized this duality to the quantum case \cite{Jimbo}, considering the commutant of the type A Hecke algebra $\mathcal{H}_n^A(q)$. This lead to the construction of the $q$-Schur algebra, introduced by Dipper and James in 1989 \cite{DJ}.  For any base field $\Bbbk$ and parameter $q\in \Bbbk^{\times}$, this algebra is a cellular, quasi-hereditary cover of $\mathcal{H}_n^A(q)$, mirroring the relationship of the classical Schur algebra to the group algebra of the symmetric group in characteristic $p$,
and can be constructed equivalently as homomorphisms over permutation modules or as $\mathcal{H}_n^A(q)$-invariant endomorphisms on the tensor space. 

However, when generalizing to type B, these two perspectives lead to different notions of a type B Schur algebra.  One approach is to try to preserve a version of Schur--Weyl duality and compatibility with induction from parabolic subgroups.  This approach is natural if one studies singular Soergel bimodules, as in the work of Williamson \cite{Williamson}. This was expanded to the unequal parameter case by Lai and Luo \cite{lai2019schur} and Lai, Nakano, and Xiang \cite{LNX}, building on the work of Green \cite{green97} and Bao, Kujawa, Li, and Wang \cite{BKLW}.
In this perspective, one constructs a type B Schur algebra $\Sl^n(m)$ which is the commutant of an action of the type B Hecke algebra $\mathcal{H}_n^B(Q,q)$ for parameters $q,Q \in \Bbbk^{\times}$ acting on $(\Bbbk^m)^{\otimes n}$.   Like the $q$-Schur algebra in type A, the dependence of this algebra on $m$ is rather small---it can be complicated for low values of $m$, but if $m\geq 2n$, the Morita equivalence class of $\Sl^n(m)$ only depends on the parity of $m$.  Since some results only apply in this case, we say $m$ is {\bf large} if $m\geq 2n$, and {\bf large odd} or {\bf large even} depending on the parity of $m$.  

Another approach, which has proven very influential in representation theory, is to preserve being a quasi-hereditary cover.  It is also known how to do this---one can consider a type B Hecke algebra as one example of a cyclotomic Hecke algebra.  By work of Dipper, James, and Mathas \cite{DJM}, 
the category of modules over the {\bf cyclotomic $q$-Schur algebra} $\Sd^n(\Pi_n)$ provides a highest weight cover of the module category $\mathcal{H}_n^B(Q,q)\mmod$ when $\Pi_n$ is the set of all bipartitions of $n$.   As with the dependence of $\Sl^n(m)$ on $m$, the cyclotomic $q$-Schur algebra depends on a set $\Lambda$ of bicompositions, though up to Morita equivalence, only the set of bipartitions in $\Lambda$ matters.  The most frequent choice we will make is $\Lambda =\Lnm$, the set of bicompositions where the first component has $n$ parts and the second $\lfloor \nicefrac{m}{2}\rfloor$.  If $m$ is large, then all bipartitions lie in $\Lnm$ and $\Sd^n(\Pi_n)$ and $\Sd^n(\Lnm)$ are Morita equivalent.  

\begin{rmk}
    We will be working with the unequal parameters case. All type B algebras will depend on two parameters, $q$ and $Q$, where $\{q, -1\}$  and $\{Q, -1\}$ are the roots of the quadratic relation for reflection in long and short roots, respectively. As we will not change these parameters often, we will suppress this notation and write $\Hh_n$ for the type B Hecke algebra $\Hh_n^B(Q,q)$ and similarly suppress $q,Q$ in the notation of $\Sd^n$ and $\Sl^n(m)$. 
\end{rmk}

Our aim in this paper is to compare the representation theory of the algebras $\Sl^n(m)$ and $\Sd^n$.  The category of modules over $\Sl^n$ has been investigated in \cite{LNX} and over $\Sd$ in \cite{DJM}, showing that for generic values of $q,Q$, both algebras are Morita equivalent to $\mathcal{H}_n$, and in fact to the group algebra of the Weyl group $W_n=W_{B_n}$.  However, for some special values of $q$ and $Q$, these algebras will not be Morita equivalent, and it is an interesting question how their categories of modules are different.  

The key connection between the representation theory of Hecke and Schur algebras is the Schur functor $\Omega_n\colon\Sd^n(\Pi_n)\mmod \to \mathcal{H}_n\mmod$, which we can realize as the functor $M\mapsto M\eh{n}$ for an idempotent satisfying $\eh{n}\Sd^n(\Pi_n)\eh{n}=\mathcal{H}_n$.  That is, this functor realizes $\mathcal{H}_n\mmod$ as a quotient of $\Sd^n(\Pi_n)\mmod$ by the subcategory of modules killed by $\eh{n}$. 

We will show that there is a similar relation between $\Sd^n(\Lnm)$ and $\Sl^n(m)$, and that the quotient functor to $\mathcal{H}_n\mmod$ factors through the category $\Sl^n(m)\mmod$ for $m$ large.  That is:
\begin{ithm}\label{th:A}
We have an isomorphism $\Sl^n(m)\cong \e\Sd^n(\Lnm)\e$ for an idempotent $\e$. 
Thus, we have a quotient functor $\Sd^n(\Lnm)\mmod \to \Sl^n(m) \mmod$. For $m$ large, $\Omega_n$ factors through this quotient:
   \[\Sd^n(\Lnm)\mmod \to \Sl^n(m) \mmod \to\mathcal{H}_n\mmod. \]
\end{ithm}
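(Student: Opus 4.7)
The plan is to realise both algebras as $\Hh_n$-endomorphism algebras and identify the tensor space defining $\Sl^n(m)$ as an $\Hh_n$-direct summand of the module defining $\Sd^n(\Lnm)$. By the Schur--Weyl construction of Lai and Luo, $\Sl^n(m) = \End_{\Hh_n}\bigl((\K^m)^{\otimes n}\bigr)$, while by Dipper, James, and Mathas, $\Sd^n(\Lnm) = \End_{\Hh_n}(T)$ for $T = \bigoplus_{\mu \in \Lnm} M^\mu$. If I can exhibit an $\Hh_n$-module isomorphism $(\K^m)^{\otimes n} \cong \e T$ for an idempotent $\e \in \End_{\Hh_n}(T)$, then the standard identity $\End_{\Hh_n}(\e T) = \e \End_{\Hh_n}(T) \e$ immediately yields $\Sl^n(m) \cong \e\Sd^n(\Lnm)\e$, and the associated quotient functor $\Sd^n(\Lnm)\mmod \to \Sl^n(m)\mmod$ is the standard corner functor $N \mapsto \e N$.

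The crucial step is therefore the $\Hh_n$-module decomposition of $(\K^m)^{\otimes n}$ into DJM permutation modules $M^\mu$ with $\mu \in \Lnm$. I would fix the standard basis of $\K^m$ indexed by $\{-M,\ldots,-1,1,\ldots,M\}$, with an extra vector $v_0$ when $m = 2M+1$ is odd, so basis tensors of $(\K^m)^{\otimes n}$ are indexed by signed functions $[n] \to \{\pm 1, \ldots, \pm M\} \cup \{0\}$. Because $T_0$ mixes basis vectors of opposite sign in the first tensor factor, the naive bicomposition-weight decomposition is not $\Hh_n$-stable; I would instead organise the basis tensors into $W_n$-orbits (indexed by the composition of $n$ recording unsigned multiplicities of $1, \ldots, M$, and $0$ when $m$ is odd) and then refine each orbit module into permutation modules $M^{(\mu^+,\mu^-)}$ using common eigenspaces of the Jucys--Murphy elements. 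Since every bicomposition so obtained has at most $M$ parts in each component, it lies in $\Lnm$ after padding by zeros, yielding the idempotent $\e$.

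Finally, for the Schur functor factorisation, $\eh{n}$ projects onto the summand $M^\omega$ of $T$ for $\omega = ((1^n), \emptyset)$; since $M^\omega$ realises the regular $\Hh_n$-module, $\eh{n}\Sd^n(\Lnm)\eh{n} = \Hh_n$ and $\Omega_n \cong \eh{n}\cdot(-)$. For $m$ large we have $M \geq n$, and the tensors $v_{i_1}\otimes\cdots\otimes v_{i_n}$ with distinct positive indices span a copy of $M^\omega$ inside $(\K^m)^{\otimes n}$, so the idempotent $\e$ can be chosen so that $\e\eh{n} = \eh{n}\e = \eh{n}$. For any $\Sd^n(\Lnm)$-module $N$ this gives $\eh{n}N = \eh{n}(\e N)$, producing the factorisation $\Omega_n \cong (\eh{n}\cdot(-)) \circ (\e\cdot(-))$. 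The main obstacle is the first decomposition: the type-A weight-space argument does not apply directly because $T_0$ mixes bicomposition types, and one must identify the $\Hh_n$-modules induced from $W_n$-orbits with the specific DJM permutation modules $M^\mu$, carefully handling the extra direction $v_0$ arising when $m$ is odd.
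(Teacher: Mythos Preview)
Your overall strategy matches the paper's: realise $V^{\otimes n}$ as $\e M^{\Lnm}$ for an idempotent $\e$ given by projection onto a sub-sum of the $M^\lambda$'s, and deduce the corner-algebra description and the factorisation of $\Omega_n$. The difference lies in how the identification of $V^{\otimes n}$ with permutation modules is carried out, and here your proposal has a genuine gap.

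You correctly observe that the $W_n$-orbit decomposition of $V^{\otimes n}$ is $\Hh_n$-stable, but the proposed further ``refinement into $M^{(\mu^+,\mu^-)}$ via Jucys--Murphy eigenspaces'' is not what happens. Each $W_n$-orbit module is already a \emph{single} DJM permutation module: it is the cyclic module $x_\alpha^B\Hh_n$, and the paper shows (Lemma~\ref{lem:module-same}) that $x_\alpha^B\Hh_n = m_{\lambda(\alpha)}\Hh_n$ where $\lambda(\alpha) = \bigl((\lfloor\alpha_0/2\rfloor),(\alpha_1,\dots,\alpha_r)\bigr)$. The crucial point is that the first component of $\lambda(\alpha)$ has at most \emph{one} part (empty when $m$ is even), not a general composition recording ``positive multiplicities''. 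There is no sign-splitting: the orbit module does not decompose as $\bigoplus M^{(\mu^+,\mu^-)}$, and JM eigenspaces will not produce such a decomposition (already for $n=1$, the regular module $\Hh_1$ is not $M^{((1),\emptyset)}\oplus M^{(\emptyset,(1))}$). The argument you need is instead the observation that $m_{\lambda(\alpha)}$ spans the unique line in $\Hh_\alpha$ killed by $T_i-q_i$ for all $s_i\in G_\alpha$, the same characterisation that pins down $x_\alpha^B$ (Lemma~\ref{lem:unique-line}).

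There is also a labelling slip in your treatment of $\eh{n}$: the bicomposition giving $M^\omega\cong\Hh_n$ is $\omega=(\emptyset,(1^n))$, not $((1^n),\emptyset)$, since $m_{(\emptyset,(1^n))}=1$ whereas $m_{((1^n),\emptyset)}=\prod_i(L_i+1)$. With the correct $\omega$ your argument for the factorisation goes through: $(\emptyset,(1^n))\in\LnmB$ for $m$ large, so $\eh{n}\e=\eh{n}$ and $\Omega_n$ factors as claimed.
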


This result has some very important consequences for the category $\Sl^n(m)\mmod$.  In particular, the algebra $\Sl^n(m)$ is cellular and so the simple modules of $\Sl^n(m)$ are canonically indexed by a subset of bipartitions of $n$, depending on the parameters $Q,q$; we say that a bipartition is {\bf \LNX{m}} if it lies in this set.  In other words, a bipartition $\lambda$ of $n$ is \LNX{m} if $e_n\Ssd{\lambda} \neq 0,$ where $\Ssd {\lambda}$ is the simple of $\Sd^n(\Lnm)$ indexed by this bipartition.

We say that a bipartition of $n$ is \LNX{o} if it is \LNX{m} for large odd $m$ and \LNX{e} if it is \LNX{m} for large even $m$.  
Kleshchev bipartitions are automatically \LNX{o} and \LNX{e}, but for many choices of $q,Q,n$, there will be other bipartitions that satisfy one or both of these conditions.  For small values of $m$, the situation is even more complicated and even Kleshchev bipartitions might not be \LNX{m}.  

Like the Kleshchev bipartitions, the \LNX{o} and \LNX{e} bipartitions form a sub-crystal of all bipartitions, as we show using induction and restriction functors on the algebras $\Sl^n(m)$ and $\Sl^{n+1}(m+2)$.   Based on a few explicit calculations, 
this allows us to show, with some modifications to the original statement, a conjecture of Lai, Nakano, and Xiang \cite[Conj. 6.1.3]{LNX}:  
\begin{ithm}\label{th:B}
Assume $m$ is large.  The following are equivalent: 
\begin{enumerate}
    \item The algebra $\Sl^n(m)$ is Morita equivalent to $\Sd^n(\Lnm)$.
    \item All bipartitions of $n$ are \LNX{m}.  
    \item The algebra $\Sl^n(m)$ is quasi-hereditary
    \item If $m$ is large odd, $Q\neq -q^k$ for all $k$ satisfying $\frac{4-n}{2}\leq k<n$; if $m$ is large even, $Q\neq -q^k$ for all $k$ satisfying $-n<k<n.$
\end{enumerate}
    If $q$ is not a root of unity of order $e \leq n$, then a bipartition  $(\lambda^{(1)},\lambda^{(2)})$ is \LNX{o} if and only if it is in the crystal graph component of $((k),\emptyset)$ for some $k$, and \LNX{e} if and only if it is Kleshchev.   
\end{ithm}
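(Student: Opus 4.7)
The plan is to prove the equivalence of (1)--(4) in three stages: first the formal chain (1)$\Leftrightarrow$(2)$\Leftrightarrow$(3) using Theorem A and cellular theory, then the arithmetic characterization (2)$\Leftrightarrow$(4) using crystal-theoretic tools, and finally the crystal description under the generic hypothesis on $q$.

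For the formal chain, Theorem A realizes $\Sl^n(m)$ as an idempotent truncation of $\Sd^n(\Lnm)$ via the Schur-like quotient functor $F = \e(-)\colon \Sd^n(\Lnm)\mmod \to \Sl^n(m)\mmod$. This functor is an equivalence of categories exactly when it kills no simple, which is by definition the statement that every bipartition of $n$ is \LNX{m}; this yields (1)$\Leftrightarrow$(2). Since $\Sd^n(\Lnm)$ is quasi-hereditary by \cite{DJM}, (1)$\Rightarrow$(3) is immediate from Morita invariance. For (3)$\Rightarrow$(2), I would use the cellular structure on $\Sl^n(m)$ from Theorem A: cells are indexed by a subposet of bipartitions of $n$ and simples by the \LNX{m} vertices within it, so quasi-heredity forces every bipartition to be \LNX{m}.

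The combinatorial heart is (2)$\Leftrightarrow$(4), and the main tool is the crystal structure on \LNX{m} bipartitions coming from the induction/restriction functors between $\Sl^n(m)$ and $\Sl^{n+1}(m+2)$ referenced before the theorem statement. Since Kleshchev bipartitions always lie in \LNX{m}, the \LNX{m} set contains at least one connected component of the crystal, and additional components appear only when certain non-genericity relations hold between $Q$ and $q$. For (4)$\Rightarrow$(2), the strategy is to show that under the stated parameter hypothesis, one can start from small-$n$ cases (verifiable by direct computation) where all bipartitions are \LNX{m} and propagate this property through the crystal to cover every bipartition at level $n$. For the contrapositive of (2)$\Rightarrow$(4), for each forbidden value $Q=-q^k$ I would construct an explicit non-\LNX{m} bipartition, starting from a small-rank counterexample and transporting it through the crystal. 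The hard part is the precise numerology: pinning down the exact forbidden ranges $\frac{4-n}{2}\le k<n$ (odd $m$) and $-n<k<n$ (even $m$) requires delicate residue/content computations and reflects the differing role of the short-root parameter in the two parities.

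For the final crystal description, assuming $q$ is not a root of unity of order $e\leq n$, the affine Kac--Moody action on Fock space degenerates to an $\mathfrak{sl}_\infty$-type action, and both \LNX{o} and \LNX{e} are closed subcrystals of the crystal of all bipartitions. Each connected component has a unique highest-weight vertex, so it suffices to identify these. For \LNX{e}, I would verify that $(\emptyset,\emptyset)$ is the only highest-weight vertex in the subcrystal, recovering the Kleshchev description. For \LNX{o}, each $((k),\emptyset)$ for $k\geq 0$ is a highest-weight vertex lying in \LNX{o}, and the data extracted from the proof of (2)$\Leftrightarrow$(4) pins down that there are no others, giving the stated description.
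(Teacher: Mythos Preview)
Your overall architecture matches the paper's: (1)$\Leftrightarrow$(2)$\Leftrightarrow$(3) via K\"onig--Xi applied to the idempotent truncation of Theorem~A, (2)$\Leftrightarrow$(4) via crystal closure plus explicit computation, and the last clause via identifying highest weight vertices in the \LNX{} subcrystal. One small point: in your (3)$\Rightarrow$(2) you write that cells are indexed by ``a subposet'' of bipartitions, but what you actually need is that for $m$ large \emph{every} bipartition of $n$ indexes a nonzero cell of $\Sl^n(m)$, so that $n_c(\Sl^n(m))=n_c(\Sd^n(\Lnm))$. The paper gets this by observing that the further truncation $\eh{n}\Csl{\lambda}$ is the Specht-type module $\Ch{\lambda}$, which is always nonzero.

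The genuine gap is in (4)$\Rightarrow$(2). Propagating through the crystal from smaller $n$ only reaches bipartitions that are not highest weight; it says nothing about highest weight vertices at level $n$ itself, and these exist (e.g.\ $(\emptyset,(e^a))$ when $q$ has order $e\le n$, or rectangles $((k+r)^r,\emptyset)$ when $Q=-q^k$). The paper's replacement for your ``small-$n$ direct computation'' is a closed product formula for the cellular bilinear form on a cyclic generator of $\Csl{(\mu,\emptyset)}$,
\[
\langle \varphi_T,\varphi_T\rangle=\prod_{a=2}^{\ell}\prod_{b=1}^{\mu_a}(Q+q^{a-b}),
\]
with the outer product starting at $a=1$ in the even case. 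This is proved by an ``unstacking'' argument: one factors $\varphi_{ST}=\varphi_{\un(S)\un(T)}\circ\varphi_{TT}$ and then uses $m_\mu L_i\equiv\operatorname{res}_\mu(i)m_\mu$ modulo higher cells. This formula is what pins down the exact ranges $\tfrac{4-n}{2}\le k<n$ and $-n<k<n$, supplies the explicit witnesses $((1^n),\emptyset)$ and $((\lceil n/2\rceil,\lfloor n/2\rfloor),\emptyset)$ for $\neg$(4)$\Rightarrow\neg$(2), and handles the highest weight vertices with $\lambda^{(2)}=\emptyset$. For highest weight vertices with $\lambda^{(2)}\neq\emptyset$, the paper argues that under (4) with $f_n=0$ one is forced to have $q$ of order $e\ge n$, while the cancellation condition on the bottom removable box of $\lambda^{(2)}$ forces $e\mid c$ for some $c<n$, a contradiction. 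Your proposal acknowledges that ``delicate residue/content computations'' are needed but does not identify this lemma; without it the crystal argument does not close, and the same formula is again the missing ingredient in your plan for the final crystal description.
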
 
The hypothesis that $q$ is not a root of unity of order $e \leq n$ is sharp, since if this fails,  then the bipartition $(\emptyset, (n))$ is not Kleshchev, but this bipartition is \LNX{m} for all $m$.  We expect that this part of this result will generalize to the case where $q$ is arbitrary if we incorporate Heisenberg crystal operations as in the work of Gerber and Norton \cite{gerberMathfrakslInfty2018}. 

We also characterize the blocks of $\Sl^n(m)$ and find which are quasi-hereditary in terms of inclusions of Young diagrams (\cref{prop:blocks-qh}).  


Finally, we discuss some natural connections to the representation theory of rational Cherednik algebras.  In type A, Rouquier showed that category $\cO$ for a Cherednik algebra is equivalent to modules over a type A Schur algebra of the same rank \cite{RouqSchur}.  It's natural to ask how this result extends in type B. Lai, Nakano and Xiang showed that for generic values of $Q$, we have a similar equivalence \cite[Th. 8.3.3]{LNX}; however, \cref{th:B} makes it clear this cannot hold for all $q,Q$, since this category $\cO$ is always highest weight.  Similarly, we can see a problem from the perspective of the rational Cherednik algebra: this algebra depends on a choice of parameters $h,H$, and we have a functor $\KZ\colon \cO^n_{h,H}\to \Hh_n^B\mmod$ where $q=e^{2\pi i h},Q=e^{2\pi i H}$.  Of course, many  different choices of $h,H$, will give the same $q,Q$, and amongst these, we can obtain inequivalent categories. In this case, they cannot all be equivalent to $\Sl^n(m)\mmod$.  

Instead, we have an analogue of \cref{th:A}---a quotient functor $\KZ^B\colon \cO^n_{h,H}\to \Sl^n(m)\mmod$ when $\Re h\leq 0$ and $ \Re H\leq 0$, which is a modification of the functor $\KZ$.  This functor is an equivalence if and only if $\Sl^n(m)$ is quasi-hereditary. 

While this quotient functor is not necessarily an equivalence, it does have a nice relationship to Rouquier's work on uniqueness of covers---if $m$ is large odd, the functor $\KZ^B$ is a 0-faithful cover and thus becomes 1-faithful after suitable deformation.    This shows that we can reconstruct the category $\cO^n_{h,H}$ purely from the category $\Sl^n(m)\mmod$ for $m$ large odd and the order on bipartitions that corresponds to the quasi-hereditary order.

\section*{Acknowledgments}

The authors acknowledge the support of the Sydney Mathematical Research Institute and University of Sydney, where part of this research was carried out.  In particular, we appreciate the hospitality of Andrew Mathas, Oded Yacobi and Daniel Tubbenhauer.  

This work was supported by Mitacs through the Mitacs Globalink program and by the Discovery Grant RGPIN-2018-03974 from the Natural Sciences and Engineering Research Council of Canada. This research was also supported by Perimeter Institute for Theoretical Physics. Research at Perimeter Institute is supported by the Government of Canada through the Department of Innovation, Science and Economic Development and by the Province of Ontario through the Ministry of Research and Innovation.

\section{Preliminaries}
\subsection{Hecke and cyclotomic $q$-Schur algebras}
\label{sec:Hecke}
Let $\Bbbk$ be a field of any characteristic.  Throughout, we will let $\mathcal{H}_n$ denote $\mathcal{H}_n^B(Q,q),$ the Hecke algebra of type B with generators $T_0, T_1, \dotsc , T_{n-1}$ subject to the relations
\begin{align*}
    (T_i-q_i)(T_i+1) & = 0 \\
    T_0T_1T_0T_1 &= T_1T_1T_0T_1 \\
    T_iT_{i+1}T_i &= T_{i+1}T_iT_{i+1} \text{ for } 1 \leq i \leq n-1 \\
    T_iT_j &= T_jT_i \text{ for } |i-j|>1
\end{align*}
where $q_i \in \Bbbk^{\times}$ are scalars.
We will work in the case where $q_0 = Q$ and $q_1 = q.$ Note that the braid relation also forces $q_i=q$ for $i = 2 , \dotsc , n-1.$

\begin{rmk}\label{rk:different-conventions}
Many different conventions for the roots of the minimal polynomials of the $T_i$ appear in the literature, but these are all equivalent up to algebra automorphism and parameter changes. We have chosen to follow the conventions of Dipper, James, and Mathas \cite{DJM}. 
\end{rmk}

We will also define $L_1 := T_0$ and inductively $L_{i+1} := q^{-1}T_iL_iT_i.$ These are the Jucys--Murphy elements as defined in \cite[\S 2]{DJM}. Recall that as a consequence of these defining relations, any symmetric polynomial in these Jucys--Murphy elements $L_i$ is central in $\Hh_n.$ In particular, note that \[T_i L_i L_{i+1}=L_iL_{i+1}T_i.\]

Let us now recall some more notation from \cite[\S 3]{DJM}: given a bicomposition $\lambda = (\lambda^{(1)}, \lambda^{(2)})$ of $n$,  where $\lambda^{(i)} = (\lambda_1^{(i)}, \dotsc , \lambda_{\ell_i}^{(i)})$ is a composition, let $\Sym_\lambda$ be the corresponding Young subgroup of the symmetric group $\Sym_n$. That is, \[\Sym_\lambda := \Sym_{\lambda_1^{(1)}} \times \dots \times \Sym_{\lambda_{\ell_1}^{(1)}} \times \Sym_{\lambda_1^{(2)}} \times \dots \Sym_{\lambda_{\ell_2}^{(2)}}.\] 

We define
\begin{equation}
    u_\lambda := \prod_{i = 1}^{|\lambda^{(1)}|} (L_i + 1)\qquad\qquad 
x_\lambda := \sum_{\omega \in \Sym_\lambda}T_\omega\qquad \qquad
m_\lambda := u_\lambda x_\lambda.
\end{equation}
Here, we depart slightly from the notation of \cite{DJM}; the element $u_\lambda$ would be denoted there by $u_{(0,|\lambda^{(1)}|)}^+$.   

Let $\mathcal{H}_n^A \subset \Hh_n$ be the copy of the type A Hecke algebra generated by $T_1, \dotsc, T_{n-1}.$ Note that $x_\lambda \in \mathcal{H}_n^A$ whereas $u_\lambda \notin \mathcal{H}_n^A$ unless $\lambda^{(1)}=\emptyset$. 

Let $\Lnm$ denote the set of bicompositions of $n$ such that first component has $n$ parts and the second $r=\lfloor \nicefrac{m}{2}\rfloor$ parts, and let $\Lpnm$ denote the set of bipartitions of $n$ such that the second component has at most $r$ parts.   Of course, if $m$ is large, then $\Lpnm=\Pi_n$ is the set of all bipartitions of $n$.  
\begin{lem}
  If $\mu \in \Lnm$ and  $\nu$ is an arbitrary bipartition of $n$ such that $\nu \vartriangleright \mu$, then   $\nu\in \Lpnm$.
\end{lem}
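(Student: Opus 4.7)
The plan is to work directly from the Dipper--James--Mathas dominance order on bicompositions, and to apply the second family of partial-sum inequalities at the critical index $i=r=\lfloor m/2\rfloor$.

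First I would recall that, in this convention, $\nu\vartriangleright\mu$ requires, in addition to the partial-sum inequalities on the first component, the inequalities
\[
|\nu^{(1)}|+\sum_{j=1}^{i}\nu_j^{(2)}\;\geq\;|\mu^{(1)}|+\sum_{j=1}^{i}\mu_j^{(2)}
\]
for every $i\geq 0$.  The useful hypothesis is not the first family (which only yields $|\nu^{(1)}|\geq|\mu^{(1)}|$, equivalently $|\nu^{(2)}|\leq|\mu^{(2)}|$) but this second family.

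Next, I would extract from $\mu\in\Lnm$ the critical structural consequence: the second component $\mu^{(2)}$ has only $r$ parts, so that $\sum_{j=1}^{r}\mu_j^{(2)}=|\mu^{(2)}|$, and the right-hand side of the inequality above at $i=r$ collapses to $|\mu^{(1)}|+|\mu^{(2)}|=n$.  The dominance inequality at $i=r$ therefore becomes
\[
|\nu^{(1)}|+\sum_{j=1}^{r}\nu_j^{(2)}\;\geq\; n,
\]
which rearranges to $\sum_{j=1}^{r}\nu_j^{(2)}\geq n-|\nu^{(1)}|=|\nu^{(2)}|$.  Since the reverse inequality is automatic, equality holds, forcing $\nu_j^{(2)}=0$ for all $j>r$.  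As $\nu$ is a bipartition, this is exactly the statement that $\nu^{(2)}$ has at most $r$ nonzero parts, so $\nu\in\Lpnm$.

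The only real obstacle is pinning down the correct dominance convention on bicompositions (this differs across sources; see \cref{rk:different-conventions} for an analogous caution on Hecke conventions).  Once the DJM convention is fixed, the proof is a single substitution, so I would not expect to spend more than a line or two beyond unpacking the definitions.
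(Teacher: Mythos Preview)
Your proposal is correct and follows essentially the same route as the paper: both apply the dominance partial-sum inequality at the index $r=\lfloor m/2\rfloor$ to force $|\nu^{(1)}|+\sum_{j=1}^{r}\nu_j^{(2)}\geq n$, hence $\nu^{(2)}$ has no parts beyond the $r$th. The paper also remarks that $\nu^{(1)}$ trivially has at most $n$ nonzero parts, but this is automatic for any bipartition of $n$ and not a separate condition in $\Lpnm$, so your omission of it is harmless.
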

\begin{proof}
    Since $\nu$ has $n$ boxes, it can have at most $n$ non-zero parts, and so certainly the same is true of $\nu^{(1)}$.  Since $\nu \vartriangleright \mu$, we must have $|\nu^{(1)}|+\sum_{i=1}^m \nu_{i}^{(2)}\geq n$, so $\nu^{(2)}$ can have at most $m$ parts.
\end{proof}  These are exactly the conditions on a set of bicompositions  required at the start of \cite[\S 6]{LNX}.   
Let \[M^\lambda := m_\lambda \Hh_n \qquad\qquad M^{\Lnm}:=\bigoplus_{\lambda \in \Lnm} M^\lambda.\]

\begin{defn}We define the {\bf Dipper--James--Mathas Type B $q$-Schur algebra} of rank $n$ to be:
\begin{align*}
\Sd^n(\Lnm)&:= \Endh \left(M^{\Lnm} \right)
\cong \bigoplus_{\mu, \nu \in \Lnm}\operatorname{Hom}_{\Hh_n}(M^\mu , M^\nu).
\end{align*}
This is a special case of the cyclotomic $q$-Schur algebra \cite[Def 6.1]{DJM}.
\end{defn}

Fix bicompositions $\mu, \nu$ of $n$.  
\begin{lem}[\mbox{\cite[Corollary 5.17]{DJM}}]
    The map $\varphi \mapsto \varphi(m_{\mu})$ induces isomorphisms
\begin{equation*}
    \homh (M^\mu, M^\nu) \cong M^{\mu *} \cap M^\nu \qquad\qquad  \Sd^n(\Lnm ) \cong \bigoplus_{\mu, \nu \in \Lnm} M^{\mu *} \cap M^\nu.
\end{equation*}
That is, there is a (necessarily unique) morphism $\varphi \in \homh (M^\mu, M^\nu)$ such that $\varphi(m_{\mu})=m_{\nu}g$ if and only there is an $h_\varphi \in \Hh_n$ such that $m_{\nu}g = h_\varphi m_\mu.$ 
\end{lem}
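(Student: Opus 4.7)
The plan is to exploit that $M^\mu = m_\mu \Hh_n$ is a cyclic right $\Hh_n$-module with generator $m_\mu$. Any homomorphism $\varphi \colon M^\mu \to M^\nu$ is then uniquely determined by the image $\varphi(m_\mu)$, so the map $\varphi \mapsto \varphi(m_\mu)$ is injective. Writing $I_\mu := \{h \in \Hh_n : m_\mu h = 0\}$ for the right annihilator of $m_\mu$, an element $y \in M^\nu$ arises as $\varphi(m_\mu)$ for some $\Hh_n$-module homomorphism $\varphi$ if and only if the prescription $m_\mu h \mapsto y h$ is well defined, that is, if and only if $y \cdot I_\mu = 0$. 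This reduces the lemma to identifying the image of the map as $M^\nu \cap \operatorname{l.ann}(I_\mu)$, and then showing that this equals $M^\nu \cap \Hh_n m_\mu = M^\nu \cap M^{\mu *}$.

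The easy inclusion $\Hh_n m_\mu \subseteq \operatorname{l.ann}(I_\mu)$ is immediate: if $y = h_\varphi m_\mu$ and $h \in I_\mu$, then $y h = h_\varphi (m_\mu h) = 0$. This already gives the ``if'' direction of the stated characterization: when $m_\nu g = h_\varphi m_\mu$, the formula $\varphi(m_\mu h) := h_\varphi m_\mu h$ is a well-defined $\Hh_n$-linear map with $\varphi(m_\mu) = m_\nu g$.

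The reverse inclusion $\operatorname{l.ann}(I_\mu) \subseteq \Hh_n m_\mu$ is the main obstacle, since it is not a formal consequence of the cyclic module structure: it requires the double-annihilator identity $\operatorname{l.ann}(\operatorname{r.ann}(\Hh_n m_\mu)) = \Hh_n m_\mu$. Two natural routes are available. The clean approach is to invoke that $\Hh_n$ is a symmetric algebra; the non-degenerate symmetrizing trace form formally implies $\operatorname{l.ann}(\operatorname{r.ann}(L)) = L$ for every left ideal $L$, which applied to $L = \Hh_n m_\mu$ (with $\operatorname{r.ann}(\Hh_n m_\mu) = I_\mu$ following from cyclicity of $M^\mu$) gives what we need. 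A more hands-on alternative, and the one followed by Dipper--James--Mathas, uses their cellular basis of $\Hh_n$: the element $m_\mu$ is identified with the diagonal cellular basis element $m_{t^\mu t^\mu}$, and the cellular structure provides compatible bases of $\Hh_n m_\mu$, $m_\mu \Hh_n$, and $I_\mu$ that reduce the identity to a direct combinatorial verification using the straightening relations for standard tableaux.

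Once the annihilator equality is established, the isomorphism $\homh(M^\mu, M^\nu) \cong M^{\mu *} \cap M^\nu$ follows, and taking the direct sum over $\mu, \nu \in \Lnm$ yields the description of $\Sd^n(\Lnm)$; the composition law on the right-hand side is then induced by multiplication in $\Hh_n$, which is straightforward to check once the identification of Hom-spaces is in hand.
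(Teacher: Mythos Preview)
The paper does not give its own proof of this lemma: it is simply quoted from \cite[Corollary~5.17]{DJM} with no argument supplied. So there is no ``paper's proof'' to compare against beyond the original DJM source.

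Your reconstruction is correct. The reduction to the double-annihilator identity $\operatorname{l.ann}(\operatorname{r.ann}(\Hh_n m_\mu)) = \Hh_n m_\mu$ is exactly the right isolation of the nontrivial content, and both routes you describe are valid. The symmetric-algebra argument is the cleaner one and works as stated: for a symmetric algebra with trace $\tau$, one checks that for a left ideal $L$ the right annihilator coincides with the orthogonal $L^{\perp}$ under $(a,b)\mapsto\tau(ab)$, and then non-degeneracy plus finite-dimensionality give $(L^{\perp})^{\perp}=L$. The cellular-basis route you mention is precisely what Dipper--James--Mathas actually do in the lead-up to their Corollary~5.17, using that $m_\mu = m_{t^\mu t^\mu}$ and straightening against the Murphy basis. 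Either way, your identification of $M^{\mu *}$ with the left ideal $\Hh_n m_\mu$ (via $m_\mu^* = m_\mu$) is the intended one.
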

This implies that to compute compositions of maps, one can use the fact that $h_{\phi \circ \psi} = h_\phi h_\psi.$

As usual, given multi-compositions $\lambda, \mu$, we say that a tableau of shape $\lambda$ is of {\bf type} $\mu$ when the entry $i_k$  occurs $\mu_i^{(k)}$ times.
Such a $\lambda$-tableau $T$ is {\bf semi-standard} if the entries across a row of a single partition are non-decreasing, those down a column are strictly increasing, and for any $k$ 
no entry of $T^{(k)}$ has the form $(i,l)$ with $l<k$.  Since we are only considering the case of bipartitions, this simply means that we cannot put an entry from the first alphabet into the second component.

Let $\mathcal{T}_0(\lambda,\mu)$ denote the set of semi-standard tableaux of shape $\lambda$ and type $\mu$, and  $\mathcal{T}(\lambda)=\mathcal{T}_0(\lambda, (\emptyset, (1^n)))$ denote the set of standard tableaux of shape $\lambda.$
\begin{thm}[\mbox{\cite[Theorem 6.12]{DJM}}]
The algebra $\Sd^n(\Lnm)$ has cellular basis \[\{ \varphi_{ST}: S \in \mathcal{T}_0(\lambda, \mu), T \in \mathcal{T}_0(\lambda, \nu) \text{ for some } \mu, \nu \in \Lnm, \lambda \in \Lpnm\}\]
\end{thm}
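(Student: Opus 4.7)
The plan is to invoke the general construction of Dipper--James--Mathas, specialized to the type B case with $\Lambda=\Lnm$. The starting ingredient is the Murphy-type cellular basis of $\Hh_n$ itself: for each bipartition $\lambda\in\Pi_n$ and each pair of standard $\lambda$-tableaux $\mathfrak{s},\mathfrak{t}$, one has elements $m_{\mathfrak{s}\mathfrak{t}}\in\Hh_n$ forming a cellular basis compatible with dominance order. For semistandard tableaux $S\in\mathcal{T}_0(\lambda,\mu)$ and $T\in\mathcal{T}_0(\lambda,\nu)$, the element $\varphi_{ST}\in\homh(M^\mu,M^\nu)$ is defined (via the identification of the preceding lemma with $M^{\mu*}\cap M^\nu$) as a weighted sum of $m_{\mathfrak{s}\mathfrak{t}}$ over standard tableaux $\mathfrak{s},\mathfrak{t}$ which ``reduce'' under a fixed specialization to $S$ and $T$, respectively.

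The key steps are then: (i) well-definedness, checking the proposed combination genuinely lies in $M^{\mu*}\cap M^\nu$ and thus represents a homomorphism of $\Hh_n$-modules; (ii) spanning, filtering $M^\mu$ by $\Hh_n$-submodules indexed by bipartitions in dominance order, whose subquotients are (dual) Specht-type modules with bases indexed by standard tableaux---intersecting with $M^\nu$ produces, on each graded piece, a basis indexed precisely by pairs of semistandard tableaux of fixed shape $\lambda$; and (iii) linear independence, a dimension count using the same filtration. The constraint $\lambda\in\Lpnm$ appears naturally because only such shapes can support a semistandard tableau of type $\mu\in\Lnm$: in particular, we need the second component of $\lambda$ to have at most $\lfloor m/2\rfloor$ rows, since a semistandard tableau cannot place entries of the first alphabet into the second component.

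For the cellular axioms, the anti-involution $\ast$ induced from the standard involution $T_w\mapsto T_{w^{-1}}$ of $\Hh_n$ sends $\varphi_{ST}\leftrightarrow\varphi_{TS}$, immediately from the corresponding property of the Murphy basis. The multiplicative axiom
\[ \varphi_{ST}\,\varphi_{UV} \equiv \sum_{S'} r_{TU}(S,S')\,\varphi_{S'V} \pmod{\Sd^{>\lambda}}, \]
with coefficients $r_{TU}(S,S')$ depending only on $S,T,U$ (not on $V$), is the main obstacle. It is obtained by applying the analogous property for the Murphy basis of $\Hh_n$ to each term in the defining sum and then resumming; the point is that the semistandard-to-standard averaging is compatible with the dominance filtration, so that higher-order terms remain higher-order after passing through the Hom-space identification. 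Once these axioms are in place, the conclusion follows from the definition of a cellular algebra.
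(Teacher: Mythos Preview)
The paper does not prove this theorem at all: it is stated with the citation to \cite[Theorem~6.12]{DJM} and used as a black box, so there is no argument in the paper to compare your proposal against. Your sketch is a reasonable outline of the Dipper--James--Mathas proof (Murphy basis of $\Hh_n$, semistandardization to define $\varphi_{ST}$, dominance filtration for spanning and independence, transfer of the cellular axioms), and nothing more is expected here.
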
 

Given a bicomposition of $n$, let $\varphi_\lambda \in \Sd^n(\Lnm)$ denote the identity map on the module $M^\lambda$. Note that this is an element of the cellular basis: $\varphi_\lambda = \varphi_{T^\lambda T^\lambda}$, where $T^\lambda$ is the unique semi-standard tableau of shape and type $\lambda$.

The cell modules of this cellular structure are the Weyl modules defined in \cite[Definition 6.13]{DJM}. We can write these as 
\[\Csd{\lambda}:=\Sd^n(\varphi_\lambda+\Lsd{\lambda})/\Lsd{\lambda}\subset \Sd^n/\Lsd{\lambda} \]
where the quotient is by the left ideal
\begin{equation*}
    \Lsd{\lambda} := \{ \varphi_{UV}: U \in \mathcal{T}_0(\alpha, \mu), V \in \mathcal{T}_0(\alpha, \nu) \text{ for some } \mu, \nu \in \Lnm, \alpha \in \Lpnm, \alpha \vartriangleright \lambda \}.
\end{equation*}
For a semi-standard $\lambda$-tableau $S$, let $\varphi_S= \varphi_{ST^\lambda} + \Lsd{\lambda}$ denote the coset of $\varphi_{ST^{\lambda}}$ in $\Csd{\lambda}$.
There is a unique inner product defined on these Weyl modules by the formula
\[ \varphi_{T^\lambda S} \varphi_{TT^\lambda} \equiv \langle \varphi_S, \varphi_T \rangle \varphi_\lambda \mod \Lsd{\lambda}.\]
This is always non-zero since $\langle\varphi_{T^\lambda},\varphi_{T^\lambda}\rangle=1 $ for all $\lambda$.  As we'll see below, this is a manifestation of the fact that $\Sd^n(\Lnm)$ is quasi-hereditary for all $m$ and all choices of parameters $q,Q\in \Bbbk^\times$.

\subsection{Type B $q$-Schur algebras}
\label{sec:type-B}

In \cite{LNX}, Lai, Nakano and Xiang consider a different endomorphism algebra, which we will denote $\Sl^n(m)$.

\begin{rmk}
The paper \cite{LNX} uses different conventions for the parameters of $\Hh_n^B(Q,q)$. As noted in Remark \ref{rk:different-conventions}, we have followed those of \cite{DJM}: the quadratic relations are $(T_0-Q)(T_0+1) = 0$ and $(T_i-q)(T_i+1)=0$ for $i \neq 0$. Where necessary, we have adjusted all theorems and formulas to match those of \cite{DJM}. In the paper \cite{LNX}, $d$ refers to the rank of the Hecke algebra. To follow the conventions of Dipper, James, and Mathas \cite {DJM}, we will call this $n$. Lai, Nakano, and Xiang use $n$ to refer to the dimension of the vector space that is being acted on by the Hecke algebra, which we have called $m$.  Note that a choice of isomorphism between the Hecke algebra in our conventions and those of \cite{LNX} requires a choice of square roots $\sqrt{Q},\sqrt{q}$ in our base field or in an extension.
\end{rmk}

As before, we let $r=\lfloor \nicefrac{m}{2}\rfloor$.  Let 
\begin{equation}\label{eq:m-prime}
    m'=m+2(n-r)=\begin{cases}
        2n+1 & m\text{ odd}\\
        2n & m\text{ even}
    \end{cases}
\end{equation}
Lai, Nakano, and Xiang define an index set $I(m) = \{-r, \dotsc -1, 0, 1, \dotsc , r\}$ if $m$ is odd and $\{-r, \dotsc -1, 1, \dotsc , r\}$ if $m$ is even.  Let $V=\Bbbk^{I(m)}$; note that this is an $m$-dimensional $\Bbbk$-vector space. The $n$-fold tensor product $V^{\otimes n}$ has a basis $\{ v_{(i_1,\dotsc ,i_n)}=v_{i_1}\otimes \cdots \otimes v_{i_n} : i_j \in I(m)\}$ with an $\Hh_n$-action that we describe below.

For $s_t \in W_{n}$ and any tuple $\vec{d} = (d_1, \dotsc , d_n) \in I(m)^n$ let
\begin{align*}
    (d_1 , \dotsc , d_n) \cdot s_t &= \begin{cases}
     (-d_1, d_2, \dotsc, d_n) & t = 0 \\
    (d_1 , \dotsc , d_{t-1}, d_{t+1}, d_t , d_{t+2}, \dotsc , d_n) & t \neq 0
    \end{cases}
\end{align*}
For $T_t \in \Hh_n$, the action is given by
\begin{align}\label{eq:Hecke-action}
    v_{\vec{d}} T_0 &= 
    \begin{cases}
    v_{\vec{d} \cdot s_0} & 0< d_1 \\
    Qv_{\vec{d} \cdot s_0} & 0 = d_1 \\
    v_{\vec{d} \cdot s_0} + (Q-1) v_{\vec{d}} & 0 > d_1
    \end{cases} &
    v_{\vec{d}} T_t &= 
    \begin{cases}
    v_{\vec{d} \cdot s_t} & d_t < d_{t+1} \\
    qv_{\vec{d} \cdot s_t} & d_t = d_{t+1} \\
    v_{\vec{d} \cdot s_t} + (q-1) v_{\vec{d}} & d_t > d_{t+1}
    \end{cases}
\end{align}
for all  $t \in \{ 1, \dotsc , n-1\}$.

\begin{defn}[\mbox{\cite[\S 2.3]{LNX}}]
$\Sl^n(m) := \Endh (V^{\otimes n}).$
\end{defn}
Since we will want to compare different values of $m$, assume that $m_1,m_2$ satisfy $m_1\leq m_2$ and $m_1$ is even or $m_2$ is odd. 
In this case, we have a natural inclusion $I(m_1)\hookrightarrow I(m_2)$; such an inclusion does not exist if $m_1$ is odd and $m_2$ is even, since in this case $0\in I(m_1)$ but $0\notin I(m_2)$.  From \eqref{eq:Hecke-action}, we see that the induced inclusion $V_1^{\otimes n}\hookrightarrow V_2^{\otimes n}$ is a map of $\Hh_n$-modules.  In fact, it is the inclusion of a summand; let $\emm{m_1}{m_2}$ be the natural projection that kills any tensor where one of the factors is not in $I(m_1)$.  Thus, one of our many appearances of corner algebras is that
\[\Sl^n(m_1)=\emm{m_1}{m_2}\Sl^n(m_2)\emm{m_1}{m_2}.\]
Note, this shows that if $m_1\leq m_2$ and $m_1$ is even or $m_2$ is odd, any \LNX{m_1} bipartition is also \LNX{m_2}.  

Consider the set \[\Anm = \{ \alpha = (\alpha_i)_{i \in I(m)} : \alpha_0 \in 2 \mathbb{Z} + 1, \alpha_{-i} = \alpha_i, \sum_i \alpha_i = 2n+1\}.\]Note that if $m$ is even, then $0\notin I(m)$, so the condition on $\alpha_0$ is vacuous.
\begin{defn}
For $\alpha \in \Anm$, we define the bicomposition $\lambda(\alpha) := (( \lfloor \frac{\alpha_0}{2} \rfloor ), (\alpha_1, \dotsc \alpha_n )).$  
\end{defn}
Let 
\begin{equation*}
   \LnmB:= \{ \lambda (\alpha) : \alpha \in \Anm \}\subseteq \Lnm.
\end{equation*}
Note that if $m$ is odd, then $\LnmB$ is the set of all bicompositions where the first component has one part and the second has $r$ parts and if $m$ is even, then it is all bicompositions where the first component is trivial and the second has $r$ parts.

Given $\alpha\in \Anm$ we define the set of transpositions
\[G_\alpha := \{ s_0, \dotsc , s_{n-1} \} - \{ s_{\lfloor \frac{\alpha_0}{2} \rfloor}, s_{\lfloor \frac{\alpha_0}{2} \rfloor + \alpha_1}, \dotsc , s_{\lfloor \frac{\alpha_0}{2} \rfloor + \alpha_1 + \dots + \alpha_r}\}.\]
The corresponding Weyl group $\bweyl{\alpha}= \langle G_\alpha \rangle$ is the subgroup generated by these transpositions. Note that this is the product of the type B Weyl group generated by $\{s_0, \dotsc s_{\lfloor \frac{\alpha_0}{2} \rfloor -1}\}$ with the type A Weyl group $\Sym_{\mu}$ for $\mu = (\alpha_1, \dotsc , \alpha_r)$. We will denote by $\Hh_{\alpha}$ the subalgebra of $\Hh_n$ generated by $\{ T_i : s_i \in G_{\alpha}\}$.

Consider the element
\[ x_\alpha^B := \sum_{\omega \in \bweyl{\alpha}} T_\omega = \sum_{T_\omega \in \Hh_\alpha} T_\omega. \]

By \cite[(2.5)]{LNX} we have an isomorphism
\[\Sl^n (m) \cong \bigoplus_{\alpha, \beta \in \Anm } \homh (x_{\alpha}^B \Hh_n, x_{\beta}^B \Hh_n).\]

An important observation is that the set $G_{\al}$ and thus the module $ x_{\alpha}^B \Hh_n$ is unchanged if we swap a part of $\al$ with one which is 0, or add additional 0's to the partition.  This shows that:
\begin{lem}\label{lem:m-prime-Morita}
    If $m\geq 2n$, then the idempotent $\emm{m'}{m}$ induces a Morita equivalence of $\Sl^n(m)$ to $\Sl^n(m')$.  
\end{lem}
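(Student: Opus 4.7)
The plan is to establish that $\emm{m'}{m}$ is a full idempotent of $\Sl^n(m)$, meaning $\Sl^n(m)\cdot\emm{m'}{m}\cdot\Sl^n(m)=\Sl^n(m)$. Combined with the corner-algebra identification $\Sl^n(m')=\emm{m'}{m}\Sl^n(m)\emm{m'}{m}$ already noted above, this yields the desired Morita equivalence via the usual full-idempotent Morita criterion.

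Using $\Sl^n(m)\cong\End_{\Hh_n}(V^{\otimes n})$, fullness of $\emm{m'}{m}$ amounts to the statement that every indecomposable $\Hh_n$-summand of $V^{\otimes n}$ already appears as a summand of $\emm{m'}{m}V^{\otimes n}$. In view of the decomposition
\[V^{\otimes n}\cong\bigoplus_{\alpha\in\Anm}x_\alpha^B\Hh_n\]
and its analogue for $m'$, I must show that every $x_\alpha^B\Hh_n$ with $\alpha\in\Anm$ is isomorphic to $x_\beta^B\Hh_n$ for some $\beta\in B_n(m')$.

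To construct such a $\beta$, I would exploit the observation highlighted immediately before the statement. Tracking through the definitions, $G_\alpha$ is cut out by the set of partial sums $\{\lfloor\alpha_0/2\rfloor+\alpha_1+\cdots+\alpha_k:0\leq k\leq r\}$, and a direct check shows that swapping a non-zero $\alpha_i$ with an adjacent zero preserves this set (the sequence of partial sums is altered at exactly one index, but the replaced value already appears elsewhere in the sequence). Iterating, $x_\alpha^B\Hh_n$ depends on $\alpha$ only through the pair $(\alpha_0,s(\alpha))$, where $s(\alpha)$ is the ordered sequence of non-zero entries of $(\alpha_1,\dotsc,\alpha_r)$. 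From the defining constraints of $\Anm$, the sum $\alpha_1+\cdots+\alpha_r$ is at most $n$, so $s(\alpha)$ has length at most $n$. Since $m'$ is chosen so that $r'=n$, I can define $\beta\in B_n(m')$ by setting $\beta_0=\alpha_0$ and placing $s(\alpha)$ into the first $|s(\alpha)|$ of the slots $\beta_1,\dotsc,\beta_n$, with the remaining slots zero. Then $x_\beta^B\Hh_n\cong x_\alpha^B\Hh_n$, as required.

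The principal technical point is the verification of the displayed observation in the strong form used here: that $x_\alpha^B\Hh_n$ depends only on $(\alpha_0,s(\alpha))$. While the set-theoretic invariance under a single adjacent swap-with-zero is immediate, one must additionally confirm that iterated adjacent swaps-with-zero generate all rearrangements preserving $s(\alpha)$, which is where the bound $m\geq 2n$ enters — there must be enough zero-slots to route non-zero entries to their chosen target positions. Once this combinatorial lemma is in hand, the Morita equivalence follows from the standard equivalence of module categories between an algebra and a corner cut out by a full idempotent.
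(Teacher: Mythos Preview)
Your approach is the same as the paper's: show the idempotent is full by proving that every $x_\alpha^B\Hh_n$ with $\alpha\in\Anm$ is isomorphic to some $x_\beta^B\Hh_n$ with $\beta\in B_n(m')$, where $\beta$ is obtained by deleting the zero parts of $\alpha$. The paper does this in two sentences.

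Your ``principal technical point'' is a non-issue, however, and your diagnosis of where the hypothesis $m\geq 2n$ enters is incorrect. The set $G_\alpha$ is determined by the \emph{set} of partial sums $\{\lfloor\alpha_0/2\rfloor+\alpha_1+\cdots+\alpha_k:0\leq k\leq r\}$, and adding a zero term leaves a partial sum unchanged; hence this set equals $\{\lfloor\alpha_0/2\rfloor+s_1+\cdots+s_j:0\leq j\leq |s(\alpha)|\}$ directly, with no iterated-swap or routing argument required. The bound $m\geq 2n$ is used only to guarantee $m'\leq m$, so that $I(m')\subset I(m)$ and the idempotent $\emm{m'}{m}$ exists in the first place. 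That $s(\alpha)$ has length at most $n$ holds for all $m$, simply because the non-negative integers $\alpha_1,\dotsc,\alpha_r$ sum to at most $n$.
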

\begin{proof}
    The composition $\alpha$ can only have $n$ non-zero parts, so if we remove all parts which are 0, we will arrive at $\alpha'\in \LnmB(m')$ such that $G_{\alpha}=G_{\alpha'}$, and so $x_{\alpha}^B \Hh_n\cong x_{\alpha'}^B \Hh_n$.  This shows that the image of $\emm{m'}{m}$ contains a copy of every indecomposable summand of $V^{\otimes n}$; the desired Morita equivalence follows immediately.  
\end{proof}

\begin{lem}\label{lem:unique-line}
For a fixed $\alpha\in \Anm$, there is a unique line $\ell \subset \Hh_\alpha$ such that $(T_i-q_i)\cdot \ell = 0$ for $i$ such that $s_i \in G_\alpha$, and it is spanned by $x_\alpha^B$.
\end{lem}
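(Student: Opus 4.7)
The plan is to prove existence and uniqueness separately by a direct coefficient computation on the standard basis $\{T_\omega : \omega \in \bweyl{\alpha}\}$ of $\Hh_\alpha$. Recall that $\bweyl{\alpha}$ is a Coxeter group with Coxeter generators $\{s_i : s_i \in G_\alpha\}$, so for any such $s_i$ and any $\omega\in \bweyl{\alpha}$ we may partition $\bweyl{\alpha}$ into two-element orbits $\{\omega, s_i\omega\}$ under left multiplication by $s_i$, ordered so that $\ell(s_i\omega) = \ell(\omega)+1$. The defining Hecke relations then give $T_i T_\omega = T_{s_i\omega}$ and $T_i T_{s_i\omega} = q_i T_\omega + (q_i-1) T_{s_i\omega}$.

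For existence, I would simply compute
\[
T_i(T_\omega + T_{s_i\omega}) = T_{s_i\omega} + q_i T_\omega + (q_i-1)T_{s_i\omega} = q_i(T_\omega + T_{s_i\omega}),
\]
and summing over all such pairs yields $T_i x_\alpha^B = q_i x_\alpha^B$ for every $s_i \in G_\alpha$. Hence $\Bbbk\cdot x_\alpha^B$ is such a line.

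For uniqueness, suppose $v = \sum_{\omega} c_\omega T_\omega$ satisfies $T_i v = q_i v$ for all $s_i \in G_\alpha$. Expanding $T_i v$ using the pairing above and comparing coefficients on $T_\omega$ and $T_{s_i\omega}$ gives, in each pair, the conditions $q_i c_{s_i\omega} = q_i c_\omega$ and $c_\omega + (q_i-1)c_{s_i\omega} = q_i c_{s_i\omega}$. Since $q_i \in \Bbbk^\times$, both conditions reduce to $c_\omega = c_{s_i\omega}$. Therefore the function $\omega \mapsto c_\omega$ is invariant under left multiplication by every Coxeter generator of $\bweyl{\alpha}$, hence constant on $\bweyl{\alpha}$, so $v$ is a scalar multiple of $x_\alpha^B$.

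The argument involves no real obstacle, but I would emphasize that it does not require $\Hh_\alpha$ to be semisimple: we cannot appeal to the usual representation-theoretic statement that the index (trivial) character appears with multiplicity one in the regular representation, since $q,Q$ may be specialized. The coefficient calculation above, however, is valid over any field and for any $q_i \in \Bbbk^\times$, which is what the rest of the paper will need.
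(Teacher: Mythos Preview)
Your proof is correct and follows essentially the same approach as the paper's: both arguments pair off elements $\{\omega, s_i\omega\}$ in $\bweyl{\alpha}$, expand $T_i$ acting on $\sum c_\omega T_\omega$ using the Hecke relations, and compare coefficients to deduce $c_\omega = c_{s_i\omega}$. Your write-up is in fact slightly more complete, since you explicitly verify existence (that $x_\alpha^B$ lies on such a line) and make precise the length condition distinguishing the two elements of each pair, whereas the paper's proof only treats the uniqueness direction and leaves the pairing notation somewhat implicit.
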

\begin{proof}

Consider an element $a=\sum_\omega a_\omega T_\omega \in \Hh_\alpha$ Using the decomposition of each group element $\omega$ into cosets of $s_i \in G_\alpha$ we obtain:

\[ a= \sum_{\omega \in \bweyl{\alpha}} (a_{s_i \omega} T_iT_\omega + a_\omega T_\omega)\]
so that
\begin{align*}
    T_ia &= \sum_{\omega \in \bweyl{\alpha}} (a_{s_i \omega}T_i^2T_\omega + a_\omega T_i T_\omega) \\
    &= \sum_{\omega \in \bweyl{\alpha}} (a_{s_i \omega}(q_i-1)T_i T_\omega + q_ia_{s_i\omega} T_\omega + a_\omega T_iT_\omega) \\
    &= \sum_{\omega \in \bweyl{\alpha}} ((a_{s_i \omega}(q_i-1)+a_\omega)T_iT_\omega + q_ia_{s_i\omega}T_\omega).
\end{align*}
Therefore, if we are to have $T_ia = q_ia$ we must have $a_{s_i \omega} = a_\omega$ for all $s_i \in \bweyl{\alpha}$.
\end{proof}    
\begin{lem}\label{lem:module-same}
Given $\alpha\in \Anm$, we have equality of the modules
\[ x_\alpha^B \Hh_n = m_{\lambda(\alpha)} \Hh_n\]
\end{lem}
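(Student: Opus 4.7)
The plan is to apply \Cref{lem:unique-line}: I will verify that $m_{\lambda(\alpha)}$ is a nonzero element of $\Hh_\alpha$ satisfying $T_i m_{\lambda(\alpha)} = q_i m_{\lambda(\alpha)}$ for every $s_i \in G_\alpha$. Uniqueness of the line $\ell$ in \Cref{lem:unique-line} then forces $m_{\lambda(\alpha)}$ to be a nonzero scalar multiple of $x_\alpha^B$, giving $m_{\lambda(\alpha)} \Hh_n = x_\alpha^B \Hh_n$. Write $k = \lfloor \alpha_0/2 \rfloor$ and $\mu = (\alpha_1, \dots, \alpha_r)$, so that $\bweyl{\alpha} = W_{B_k} \times \Sym_\mu$ and $\Sym_{\lambda(\alpha)} = \Sym_k \times \Sym_\mu \subseteq \bweyl{\alpha}$. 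That $m_{\lambda(\alpha)} = u_{\lambda(\alpha)} x_{\lambda(\alpha)}$ lies in $\Hh_\alpha$ is clear: $x_{\lambda(\alpha)}$ sums over $\Sym_{\lambda(\alpha)} \subseteq \bweyl{\alpha}$, and $u_{\lambda(\alpha)} = \prod_{i=1}^{k}(L_i + 1)$ is a polynomial in $L_1, \dots, L_k$, each of which is itself a polynomial in $T_0, \dots, T_{k-1} \in \Hh_\alpha$.

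For the eigenvalue condition with $j \geq 1$ and $s_j \in G_\alpha$, the generator $T_j$ commutes with $u_{\lambda(\alpha)}$: this is immediate when $j > k$ by disjoint generators, and for $1 \leq j \leq k-1$ it follows from the standard Jucys--Murphy identities $T_j L_j L_{j+1} = L_j L_{j+1} T_j$ and $T_j(L_j + L_{j+1}) = (L_j + L_{j+1}) T_j$, which together give $T_j(L_j+1)(L_{j+1}+1) = (L_j+1)(L_{j+1}+1) T_j$, combined with the standard fact $T_j L_i = L_i T_j$ for $i \neq j, j+1$ (verified in the $i \geq j+2$ case by braid manipulations using the recursion $L_{i+1} = q^{-1} T_i L_i T_i$). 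Since $s_j \in \Sym_{\lambda(\alpha)}$, we have $T_j x_{\lambda(\alpha)} = q\, x_{\lambda(\alpha)}$, yielding $T_j m_{\lambda(\alpha)} = q\, m_{\lambda(\alpha)}$.

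The main point is the case $j = 0$, which arises when $k \geq 1$; here I would show $T_0 u_{\lambda(\alpha)} = Q\, u_{\lambda(\alpha)}$. The key observation is that $T_0$ commutes with every $L_i$ for $i \geq 2$: the base case $T_0 L_2 = L_2 T_0$ is exactly the type B braid relation $T_0 T_1 T_0 T_1 = T_1 T_0 T_1 T_0$ (after multiplying by $q^{-1}$), and the recursion $L_{i+1} = q^{-1} T_i L_i T_i$ together with $[T_0, T_i] = 0$ for $i \geq 2$ propagates this to all higher $i$. Together with the direct computation $T_0(L_1 + 1) = T_0^2 + T_0 = Q(L_1+1)$ from the quadratic relation $(T_0 - Q)(T_0 + 1) = 0$, this gives $T_0 u_{\lambda(\alpha)} = Q\, u_{\lambda(\alpha)}$, and hence $T_0 m_{\lambda(\alpha)} = Q\, m_{\lambda(\alpha)}$.

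Since $m_{\lambda(\alpha)}$ is nonzero as a Murphy-type element of $\Hh_n$, \Cref{lem:unique-line} then identifies it with a nonzero scalar multiple of $x_\alpha^B$, completing the proof. The main subtlety is the $j = 0$ computation; it turns out to be surprisingly clean precisely because the type B braid relation is equivalent to the commutation of $T_0$ with all higher Jucys--Murphy elements, which lets the symmetric-function symmetrizer $u_{\lambda(\alpha)}$ absorb $T_0$ into its scalar eigenvalue $Q$ without ever interacting with the $x_{\lambda(\alpha)}$ factor.
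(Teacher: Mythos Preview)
Your proof is correct and follows essentially the same approach as the paper: both verify that $(T_i-q_i)m_{\lambda(\alpha)}=0$ for every $s_i\in G_\alpha$ and then invoke \Cref{lem:unique-line}. The paper is slightly more economical, handling the $i\geq 1$ case by writing $m_\lambda=x_\lambda u_\lambda$ (using that $u_\lambda$ is $\Sym_\lambda$-symmetric in the $L_i$) rather than commuting each $T_j$ past $u_\lambda$ individually, and handling $i=0$ by observing that $T_0+1$ divides $u_\lambda$; but the content is the same, and you are in fact more explicit than the paper in checking that $m_{\lambda(\alpha)}\in\Hh_\alpha$ before applying the uniqueness lemma.
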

\begin{proof}
    We can write $m_{\lambda}=x_{\lambda}u_{\lambda}=u_{\lambda}x_{\lambda}$ since $u_{\lambda}$ is a polynomial in the $L_i$'s which is symmetric under $\Sym_{\lambda}$.  Note also that if $\lambda^{(1)}\neq \emptyset$, then $u_{\lambda}$ is divisible by $T_0+1$, and so $(T_0-Q)u_{\lambda}=0$. 
    On the other hand, for any composition $\lambda$, we have that $x_{\lambda}=x_{\alpha'}^B$ for  \[\alpha'_i=\begin{cases}
    1 & i=0\\
    \lambda_i& i>0.
\end{cases}\] 
Applying  Lemma \ref{lem:unique-line} in this case, we find that if $s_i\in \Sym_{\lambda}$, then $(T_i-q)x_{\lambda}=0$.  
    Now, fix $\alpha$, and consider $m_{\lambda(\alpha)}$.  If $\alpha_0>1$, we have $s_0\in G_{\alpha}$.   Since $\lambda^{(1)}=(\lfloor \frac{\alpha_0}{2} \rfloor )$, we have that
    \[(T_0-Q)m_{\lambda(\alpha)}=(T_0-Q)u_{\lambda(\alpha)}x_{\lambda(\alpha)}=0.\]
On the other hand, if $s_i\in G_{\alpha}$ for $i>0$, then $s_i\in \Sym_{\lambda(\alpha)}$ and so 
\[(T_i-q)m_{\lambda(\alpha)}=(T_i-q)x_{\lambda(\alpha)}u_{\lambda(\alpha)}=0.\qedhere \]
\end{proof}
Thus, each direct summand of $V^{\otimes n}$ appears in $M^{\Lnm}$. However, not all summands appear, since $\LnmB$ is a proper subset of $\Lnm$ if $n>1$.  Let 
\begin{equation}\label{e-def}
    \e= \sum_{\lambda \in \LnmB} \varphi_\lambda.
\end{equation}  
Lemma \ref{lem:module-same} shows that as modules over the Hecke algebra:
\begin{equation}\label{tensormod}
    \e M^{\Lnm}=V^{\otimes n}.
\end{equation}
Thus, we have that:
\begin{thm}\label{idemp}
There is an isomorphism $\Sl^n(m) \cong \e \Sd^n(\Lnm) \e.$
\end{thm}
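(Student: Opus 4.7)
The plan is to leverage equation \eqref{tensormod}, which already encodes the essential content, and combine it with the standard corner-algebra identity $e \End(M) e = \End(eM)$ when $e$ is an idempotent endomorphism of $M$.

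First, I would unpack $\e\Sd^n(\Lnm)\e$ explicitly. Since $\varphi_\lambda$ is the identity endomorphism on the summand $M^\lambda$ (i.e. the projection of $M^{\Lnm}$ onto $M^\lambda$ composed with the inclusion back), the idempotent $\e = \sum_{\lambda\in\LnmB}\varphi_\lambda$ is the projection in $\Endh(M^{\Lnm})$ onto the submodule $\bigoplus_{\lambda\in\LnmB} M^\lambda$. For any $\Hh_n$-module $M$ and such an idempotent $\e$, one has the standard identification
\[
\e\,\Endh(M)\,\e \;\cong\; \Endh(\e M),
\]
so in particular
\[
\e\,\Sd^n(\Lnm)\,\e \;\cong\; \Endh\!\Bigl(\bigoplus_{\lambda\in\LnmB} M^\lambda\Bigr).
\]

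Next, I would match the right-hand side with $\Sl^n(m) = \Endh(V^{\otimes n})$. The isomorphism $\Sl^n(m)\cong\bigoplus_{\alpha,\beta\in\Anm}\homh(x_\alpha^B\Hh_n, x_\beta^B\Hh_n)$ from \cite[(2.5)]{LNX} reflects a decomposition $V^{\otimes n}\cong\bigoplus_{\alpha\in\Anm} x_\alpha^B\Hh_n$ as $\Hh_n$-modules. By \cref{lem:module-same}, each summand satisfies $x_\alpha^B\Hh_n = m_{\lambda(\alpha)}\Hh_n = M^{\lambda(\alpha)}$. Since the assignment $\alpha\mapsto\lambda(\alpha)$ is a bijection of $\Anm$ onto $\LnmB$ (for $m$ odd, $\alpha_0$ is recovered from $\lfloor\alpha_0/2\rfloor$ as an odd positive integer; for $m$ even, $\alpha_0$ is absent), we obtain
\[
V^{\otimes n} \;\cong\; \bigoplus_{\lambda\in\LnmB} M^\lambda \;=\; \e M^{\Lnm},
\]
which is exactly \eqref{tensormod}. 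Applying $\Endh$ to both sides and combining with the corner-algebra identification above then yields the desired isomorphism
\[
\Sl^n(m) \;=\; \Endh(V^{\otimes n}) \;\cong\; \Endh(\e M^{\Lnm}) \;\cong\; \e\,\Sd^n(\Lnm)\,\e.
\]

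The main obstacle, and what has been absorbed into \cref{lem:module-same,lem:unique-line}, is the identification of Hecke-module generators $x_\alpha^B$ with the Dipper--James--Mathas generators $m_{\lambda(\alpha)}$; once these identifications are in hand, the theorem reduces to the general corner-algebra principle and is essentially formal. The only subtlety worth flagging explicitly in the write-up is the well-definedness of the bijection $\Anm\leftrightarrow\LnmB$ used to pass between indexings, and the observation that $\e$ as defined in \eqref{e-def} really does cut out the direct sum of summands $M^\lambda$ for $\lambda\in\LnmB$ inside $M^{\Lnm}$.
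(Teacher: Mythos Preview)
Your proposal is correct and follows essentially the same argument as the paper: identify $\e M^{\Lnm}\cong\bigoplus_{\lambda\in\LnmB}M^\lambda$, use \cref{lem:module-same} and the bijection $\Anm\leftrightarrow\LnmB$ to match this with $V^{\otimes n}$, and then apply the corner-algebra identity $\e\,\Endh(M)\,\e\cong\Endh(\e M)$. Your write-up is somewhat more explicit about the bijection and the role of the idempotent, but the logical structure is identical to the paper's proof.
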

\begin{proof}   We note that $\varphi_\lambda$ is zero on all $M^\mu$ for $\mu \neq \lambda$. Thus, \begin{equation}\label{eq:compare-tensors}
       \e M^{\Lnm}\cong \bigoplus_{\lambda \in \LnmB} M^\lambda.
   \end{equation}
   Using the bijection of $\Anm$  with $\LnmB\subset \Lnm$ and the isomorphism of Lemma \ref{lem:module-same}, we can conclude $\e M^{\Lnm}=V^{\otimes n}$.  Thus, 
   \[ \e \Sd^n(\Lnm) \e =\Endh(\e M^{\Lnm})=\Endh(V^{\otimes n})=\Sl^n(m).\qedhere \]
\end{proof}
In particular, $\Sl^n(m)$ and $\Sd^n(\Lnm)$ are Morita equivalent if and only if no simple $\Sd^n(\Lnm)$ modules are killed by $\e$.

\subsection{Cellularity} To understand the consequences of this result, let us consider a general cellular algebra $A$ with cell data $(\Lambda,M,C,i)$,  cell modules $\cell{\lambda}$, and a general idempotent $e\in A$ such that $i(e)=e$.  Of course, we can consider the corner algebra or idempotent truncation $eAe$ for this idempotent.  

The example of matrix algebras shows that we can choose a cellular basis $C_{ST}$ where the number of non-zero vectors such that $eC_{ST}e\neq 0$ is larger than the dimension of $eAe$, meaning that these don't form a basis.  However, this defect is readily corrected by choosing a different cellular basis with the same associated cell modules and ideals.  
\begin{lem}[\mbox{\cite[Prop 4.3]{Konig-Xi-structure}}]\label{lem:corner-cell}    
    Under the assumptions above, the algebra $A$ has a cell datum $(\Lambda, M,C',i)$ with the same cell modules and ideals, such that $eAe$ is a cellular algebra with cell modules $e\cell{\lambda}$ and cell datum $(\Lambda^{(e)}, M^{(e)},C',i)$, where 
    \[ \Lambda^{(e)}=\{\la\in \Lambda \mid e\cell{\lambda}\neq 0\},\]
    \[M ^{(e)}(\lambda)=\{S\in M(\lambda) \mid eC'_S\neq 0\}\]
\end{lem}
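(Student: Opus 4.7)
The plan is a basis refinement adapted to $e$, followed by idempotent truncation. Denote by $N^{\vartriangleright\lambda}$ the cellular ideal $\operatorname{span}_\K\{C_{UV} : U, V \in M(\mu),\, \mu \vartriangleright \lambda\}$. Since $e^2 = e$, the element $e$ acts idempotently on each cell module $\cell{\lambda}$, so $\cell{\lambda} = e\cell{\lambda} \oplus (1-e)\cell{\lambda}$. Fix a subset $M^{(e)}(\lambda) \subseteq M(\lambda)$ of cardinality $\dim_\K e\cell{\lambda}$ and an invertible $\K$-linear change of basis encoded by a matrix $P_\lambda$ with rows and columns indexed by $M(\lambda)$, taking the original basis $\{C_{ST^\lambda} + N^{\vartriangleright\lambda}\}_{S \in M(\lambda)}$ of $\cell{\lambda}$ to a new basis $\{v'_S\}_{S \in M(\lambda)}$ with $\{v'_S\}_{S \in M^{(e)}(\lambda)}$ a basis of $e\cell{\lambda}$ and the remaining vectors a basis of $(1-e)\cell{\lambda}$. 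Then $ev'_S = v'_S$ for $S \in M^{(e)}(\lambda)$ and $ev'_S = 0$ otherwise.

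Lift the $P_\lambda$ to a new cellular basis of $A$ by setting, for each $\lambda$ and each $S, T \in M(\lambda)$,
\[
C'_{ST} := \sum_{S', T' \in M(\lambda)} (P_\lambda)_{SS'} (P_\lambda)_{TT'} \, C_{S'T'}.
\]
The formula is symmetric in $(S, T)$, so $i(C'_{ST}) = C'_{TS}$ follows from $i(C_{S'T'}) = C_{T'S'}$. The cellular multiplication axiom is preserved because the structure constants transform by conjugation by $P_\lambda$ and therefore still depend only on $a$ and the first index. Hence $(\Lambda, M, C', i)$ is cell data for $A$ with the same poset and cell ideals. Using the eigenvalues of $e$ on $\cell{\lambda}$ from the previous paragraph, the multiplication rule yields
\[
eC'_{ST} \equiv \begin{cases} C'_{ST} & S \in M^{(e)}(\lambda), \\ 0 & S \notin M^{(e)}(\lambda), \end{cases} \pmod{N^{\vartriangleright\lambda}},
\]
and applying $i$ together with $i(e) = e$ gives the mirror statement for right multiplication. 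Consequently $eC'_{ST}e \equiv C'_{ST}$ modulo $eN^{\vartriangleright\lambda}e$ when $S, T \in M^{(e)}(\lambda)$, and $eC'_{ST}e \in eN^{\vartriangleright\lambda}e$ otherwise.

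Setting $\Lambda^{(e)} := \{\lambda \in \Lambda : e\cell{\lambda} \neq 0\}$, the family $\{eC'_{ST}e : \lambda \in \Lambda^{(e)},\, S, T \in M^{(e)}(\lambda)\}$ is then a cellular basis of $eAe$ with cell modules $e\cell{\lambda}$. Linear independence and spanning follow from the congruences above by induction on $\vartriangleright$. The involution on $eAe$ is the restriction of $i$, swapping $eC'_{ST}e \leftrightarrow eC'_{TS}e$. The multiplication axiom is inherited: for $a \in eAe$, the identity $a \cdot eC'_{ST}e = aC'_{ST}e$ (using $ae = a$), the cellular rule in $(A, C')$, and the vanishing above combine to restrict the sum to $S' \in M^{(e)}(\lambda)$, giving structure constants that depend only on $a$ and $S$. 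The cell module for $\lambda \in \Lambda^{(e)}$ is then $e\cell{\lambda}$ with basis $\{ev'_S\}_{S \in M^{(e)}(\lambda)}$.

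The main technical obstacle is verifying that $C'$ is really a basis of $A$, and not only of the associated graded for the cell filtration. This requires a downward induction on the cell order $\vartriangleright$, in which the linear change of basis on each $\cell{\lambda}$ is lifted to $A$ cell by cell, using the already-modified basis on strictly higher cells to form compatible representatives. A secondary subtlety is the non-canonicity of the choice of $M^{(e)}(\lambda)$ and of a complement to $e\cell{\lambda}$ inside $\cell{\lambda}$; it is precisely this lack of canonicity that explains why the naive truncation $\{eC_{ST}e\}$ of the original basis generally fails to be cellular.
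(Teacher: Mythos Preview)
The paper does not supply its own proof of this lemma; it simply cites K\"onig--Xi \cite[Prop.~4.3]{Konig-Xi-structure}, so there is no in-paper argument to compare against. Your argument is correct and is essentially the standard one: adapt the cell-module basis to the decomposition $\cell{\lambda}=e\cell{\lambda}\oplus (1-e)\cell{\lambda}$, lift via $P_\lambda\otimes P_\lambda$ on each cell layer, and then truncate.

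One remark on your closing paragraph: the concern you raise there does not actually arise from your construction. Because you defined $C'_{ST}$ as a $\K$-linear combination of the $C_{S'T'}$ with the \emph{same} $\lambda$, the global change of basis from $C$ to $C'$ is block-diagonal, with invertible block $P_\lambda\otimes P_\lambda$ at each $\lambda$. Hence $\{C'_{ST}\}$ is automatically a basis of $A$ itself, not merely of the associated graded for the cell filtration; no downward induction or careful lifting is needed at this step. The induction on $\vartriangleright$ is genuinely required only for showing that the truncated family $\{eC'_{ST}e : S,T\in M^{(e)}(\lambda),\ \lambda\in\Lambda^{(e)}\}$ spans $eAe$, which you already handle in the preceding paragraph.
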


In fact, since the idempotent $\e$ defined in \cref{e-def} acts by the identity or zero on each summand $M^{\nu}$, we do not need to choose a new basis:
\begin{cor}
The algebra $\Sl^n(m)$ is a cellular algebra with cellular basis
\[\{\e \varphi_{ST}\e: S \in \mathcal{T}_0(\lambda, \mu), T \in \mathcal{T}_0(\lambda, \nu) \text{ for some } \mu, \nu \in \LnmB, \lambda \in \Lpnm\}\]
and cell modules $\Csl{\lambda}=\e \Csd{\lambda}$.
\end{cor}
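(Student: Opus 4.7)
The plan is to apply \cref{lem:corner-cell} to the corner algebra presentation $\Sl^n(m)\cong \e\Sd^n(\Lnm)\e$ of \cref{idemp}, and then to verify that for this particular $\e$ the original cellular basis already suffices, so that no passage to a modified basis $C'$ is necessary.

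First I would check the hypothesis $i(\e) = \e$. The cellular involution on $\Sd^n(\Lnm)$ exchanges $\varphi_{ST}$ with $\varphi_{TS}$, so it fixes every diagonal idempotent $\varphi_\lambda = \varphi_{T^\lambda T^\lambda}$ and hence their sum $\e$. \cref{lem:corner-cell} then furnishes \emph{some} cell datum on the corner algebra $\e\Sd^n(\Lnm)\e$.

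The main step, which is what makes it unnecessary to modify $C$, is to compute $\e\varphi_{ST}\e$ directly. A cellular basis element $\varphi_{ST}$ with $S\in\mathcal{T}_0(\lambda,\mu)$ and $T\in\mathcal{T}_0(\lambda,\nu)$ is a $\Hh_n$-module homomorphism between the summands $M^\mu$ and $M^\nu$ of $M^{\Lnm}$; viewed as an endomorphism of $M^{\Lnm}$, it vanishes off one of these summands and lands in the other. Because each $\varphi_\tau$ is the identity on $M^\tau$ and zero on every other summand of $M^{\Lnm}$, we obtain
\[\e\varphi_{ST}\e = \begin{cases} \varphi_{ST} & \mu,\nu\in\LnmB,\\ 0 & \text{otherwise.}\end{cases}\]
Hence the nonzero elements $\e\varphi_{ST}\e$ form a subset of the original cellular basis $\{\varphi_{ST}\}$ and in particular are linearly independent. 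They span $\e\Sd^n(\Lnm)\e$, since any element of the corner algebra is a $\Bbbk$-linear combination of the $\varphi_{ST}$ and the above formula shows that compression by $\e$ on both sides kills all of them except those already in the stated subset. This allows us to take $C' = C$ in \cref{lem:corner-cell}, and the cell ideal filtration together with the involution transfer directly from $\Sd^n(\Lnm)$ by intersecting with $\e\Sd^n(\Lnm)\e$.

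For the cell modules, \cref{lem:corner-cell} identifies them as $\Csl{\lambda} = \e\Csd{\lambda}$, and the same diagonal argument shows that a spanning set is provided by the cosets $\varphi_S$ with $S$ of type $\mu\in\LnmB$. I do not foresee any substantive obstacle here: once $\e$ is observed to be diagonal in the basis $\{\varphi_{ST}\}$, which is immediate from the fact that $\varphi_\lambda$ is the projection onto $M^\lambda$, \cref{lem:corner-cell} applies in its simplest form and both claims of the corollary fall out.
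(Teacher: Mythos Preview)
Your proposal is correct and follows exactly the approach the paper takes: the paper's entire justification is the observation that ``the idempotent $\e$ \ldots\ acts by the identity or zero on each summand $M^{\nu}$,'' which is precisely your computation of $\e\varphi_{ST}\e$, together with the appeal to \cref{lem:corner-cell}. Your explicit verification that $i(\e)=\e$ is a useful addition that the paper leaves implicit.
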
   
Throughout, we will abuse notation and write $\varphi_{ST}$ for $\e \varphi_{ST}\e$ when $\mu, \nu \in \LnmB$.

\subsection{LNX bipartitions}
Whenever $A$ is a finite-dimensional $\K$-algebra, let $\Simp(A)$ denote the set of simple $A$-modules up to isomorphism.  Consider an idempotent $e\in A$.  We leave to the reader the proof of the following standard lemma:
\begin{lem}\hfill
\begin{enumerate}
    \item For each simple $eAe$ module $L$, the module $A\otimes_{eAe}L$ has a unique simple quotient $L^+=\operatorname{hd}(A\otimes_{eAe}S)$ such that $eL^+=L$.
    \item The map $L\mapsto L^+$ is an injective map $\Simp(eAe)\to \Simp(A)$ whose image is the simples $L'$ such that $eL'\neq 0$.
\end{enumerate}    
\end{lem}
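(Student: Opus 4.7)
The plan is to exploit the standard adjunction between the exact Schur functor $e(-)\colon A\mmod\to eAe\mmod$ and its left adjoint $A\otimes_{eAe}(-)$. The key preliminary is the natural $eAe$-module isomorphism $e\cdot(A\otimes_{eAe}L)\cong L$, which follows from the identity $a\otimes \ell = ae\otimes \ell$ in $A\otimes_{eAe}L$ (using $e\ell=\ell$ to slide $e$ across the tensor): this shows $e\cdot(a\otimes\ell) = eae\otimes\ell$, placing $eM$ inside the image of $eAe\otimes_{eAe}L$, which is manifestly $L$.

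For part (1), set $M = A\otimes_{eAe}L$. The same sliding identity shows $a\otimes\ell = a\cdot(e\otimes\ell)$, so $M$ is generated as an $A$-module by the subspace $e\otimes L$, which under $eM\cong L$ is exactly $eM$. Exactness of $e(-)$ and simplicity of $L$ then force any $A$-submodule $N\subseteq M$ to have $eN\in\{0,L\}$; if $eN = L$ then $N$ contains $e\otimes L$ and hence equals $M$. Consequently the sum $N_0$ of all submodules killed by $e$ is itself killed by $e$, is proper, and contains every other proper submodule. Thus $L^+:=M/N_0$ is the unique simple quotient of $M$, and $eL^+=L$ by exactness, yielding the claim.

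For part (2), injectivity is immediate since $L$ is recovered from $L^+$ as $eL^+$. For the image, given a simple $L'\in\Simp(A)$ with $eL'\neq 0$, I first verify that $eL'$ is simple as an $eAe$-module: any nonzero $eAe$-submodule $N\subseteq eL'$ generates an $A$-submodule $AN\subseteq L'$ which must equal $L'$ by simplicity, and then $eL' = eAN\subseteq N$ (for $\nu\in N\subseteq eL'$ one has $e\nu = \nu$, so $(ea)\nu = eae\cdot\nu\in N$ since $N$ is $eAe$-stable), forcing $N = eL'$. Setting $L = eL'$, the adjunction provides a nonzero $A$-linear surjection $A\otimes_{eAe}L\twoheadrightarrow L'$ via $a\otimes\ell\mapsto a\ell$, and the uniqueness from part (1) identifies $L'\cong L^+$. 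The only mild subtlety throughout is the tensor bookkeeping over $eAe$ --- specifically, sliding $e$ across the tensor using $e\ell = \ell$ --- but this is routine and presents no genuine obstacle; the statement is a standard fact about idempotent subalgebras and their induced Schur functors.
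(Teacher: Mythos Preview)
Your proof is correct and complete. The paper itself does not provide a proof of this lemma---it explicitly states ``We leave to the reader the proof of the following standard lemma''---so there is nothing to compare against; you have simply supplied the standard adjunction argument that the authors had in mind.
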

All our different idempotent truncations (or the Morita equivalence of \cref{lem:m-prime-Morita} if $m\geq m'$) give us maps 
\[\begin{tikzcd}
	& {\Simp(\Sl^n(m))} & {\Simp(\Sd^n(m))} \\
	{\Simp(\Hh_n^B)} & {\Simp(\Sl^n(m'))} & {\Simp(\Sd^n(m'))}
	\arrow[hook, from=2-1, to=2-2]
	\arrow[hook, from=2-2, to=2-3]
	\arrow[hook, from=1-2, to=2-2]
	\arrow[hook, from=1-3, to=2-3]
	\arrow[hook, from=1-2, to=1-3]
\end{tikzcd}\]
The final set $\Simp(\Sd^n(m'))$ is identified with the set $\Pi_n$ of all bipartitions of $n$, so all the other algebras have sets of simples that are identified with  subsets of bipartitions.  A celebrated theorem of Ariki describes the image of $\Simp(\Hh_n)$: It is precisely the {\bf Kleshchev bipartitions}.  Analogously, we define:
\begin{defn}
    We call a bipartition \textbf{\LNX{m}} if $\Ssl{\lambda}=\e\Ssd{\lambda}\neq 0$.  We'll write \LNX{o} to mean \LNX{m} for $m$ large odd, and \LNX{e} for $m$ large even. 
\end{defn}

The main question we'll address in this paper is:
\begin{question}
    Which bipartitions are \LNX{m} for a given pair of parameters $(Q,q)$?
\end{question}
Obviously, all Kleshchev bipartitions are \LNX{m} for $m$ large.  However, we can construct a number of others (which in some cases are not Kleshchev).
\begin{lem}\label{lem:LB-LNX}
    Every bipartition in $\LnmB$ is \LNX{m} for all parameters $(Q,q)$.
\end{lem}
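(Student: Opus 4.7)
The plan is to prove this directly from the cellular structure of $\Sd^n(\Lnm)$ and the explicit form $\e = \sum_{\lambda \in \LnmB} \varphi_\lambda$, without needing any hypothesis on $(Q,q)$. Concretely, I want to show that for $\lambda \in \LnmB$ the simple head $\Ssd{\lambda}$ of the Weyl module $\Csd{\lambda}$ contains a vector on which $\varphi_\lambda$ acts as the identity, and then observe that since $\varphi_\lambda$ is one of the summands of $\e$, the module $\e\Ssd{\lambda}$ is necessarily nonzero.

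The starting observation, already noted at the end of \cref{sec:Hecke}, is that $\langle \varphi_{T^\lambda}, \varphi_{T^\lambda}\rangle = 1$ for every $\lambda \in \Lpnm$, where $T^\lambda$ is the unique semi-standard $\lambda$-tableau of type $\lambda$. By the general theory of cellular algebras, the simple module $\Ssd{\lambda}$ is the quotient of $\Csd{\lambda}$ by the radical of this inner product, so $\varphi_{T^\lambda}$ descends to a nonzero element in $\Ssd{\lambda}$.

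Next, I want to use that $\varphi_\lambda = \varphi_{T^\lambda T^\lambda}$ acts as the identity on $\varphi_{T^\lambda}$. This follows because $\Csd{\lambda}$ is generated (as a cell module) by the class of $\varphi_\lambda$, and $\varphi_\lambda$ is an idempotent, so $\varphi_\lambda \cdot \varphi_{T^\lambda} = \varphi_{T^\lambda}$ already in $\Csd{\lambda}$ and hence also in the quotient $\Ssd{\lambda}$. Therefore $\varphi_\lambda \Ssd{\lambda} \neq 0$.

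To finish, I invoke the fact that $\{\varphi_\mu\}_{\mu \in \Lnm}$ are pairwise orthogonal idempotents, since $\varphi_\mu$ acts as the identity on $M^\mu$ and by zero on each $M^\nu$ with $\nu \neq \mu$. When $\lambda \in \LnmB$ the idempotent $\varphi_\lambda$ is one of the summands of $\e$, so
\[\e \Ssd{\lambda} \supseteq \varphi_\lambda \Ssd{\lambda} \neq 0,\]
which is exactly the condition for $\lambda$ to be \LNX{m}. There is no real obstacle in this argument; the point is simply to assemble the cellular machinery with the explicit definition of $\e$, and in particular the conclusion holds for every choice of parameters $(Q,q) \in \Bbbk^\times \times \Bbbk^\times$.
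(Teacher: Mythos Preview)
Your proof is correct and follows essentially the same approach as the paper: both arguments use that $\langle \varphi_{T^\lambda}, \varphi_{T^\lambda}\rangle = 1$, so $\varphi_{T^\lambda}$ survives in $\Ssd{\lambda}$, and since $T^\lambda$ has type $\lambda \in \LnmB$ the idempotent $\varphi_\lambda$ (a summand of $\e$) fixes it, giving $\e\Ssd{\lambda}\neq 0$. Your write-up is a bit more explicit about the intermediate steps, but the content is the same.
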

\begin{proof}
    If $\lambda \in \LnmB,$ then $\lambda = (\lambda^{(1)}, \lambda^{(2)}) = ((\lambda_1^{(1)}), \lambda^{(2)})$, i.e. the first partition has only one part. Let  $T^\lambda$ be the unique tableau in $\mathcal{T}_0(\lambda, \lambda).$ As noted above, $\varphi_{T^\lambda T^\lambda}$ is the identity map on $M^\lambda,$ and so
    \[ \langle \varphi_{T^\lambda}, \varphi_{T^\lambda}\rangle = 1\]
regardless of parameters. Therefore $\Ssl{\lambda} \neq 0$ and so $\lambda$ is \LNX{m}.
\end{proof}

\section{Quasi-hereditarity}
\subsection{Quasi-hereditarity of cellular algebras}

We will consider the question of whether $\Sl^n(m)$ is quasi-hereditary or equivalently, whether its category of modules is a highest weight category.  Refer to \cite[\S 3]{CPS} for the definitions of these notions.

Given a finite-dimensional cellular algebra $A$, let $n_s(A)$ be the number of isomorphism classes of simple $A$-modules, and let $n_c(A)$ be the number of cells.  By a theorem of Graham and Lehrer \cite[Th. 3.4]{grahamCellularAlgebras1996}, the number of simples
$n_s(A)$ is equal to the number of cell modules with non-zero bilinear form and, in particular, $n_s(A)\leq n_c(A)$.     The quasi-hereditarity of a cellular algebra is determined by the relationship between these numbers:
\begin{thm}[\mbox{\cite[Th. 3.1]{KX}}]\label{th:KX}
    The cellular algebra $A$ is quasi-hereditary if and only if $n_c(A)=n_s(A)$.
\end{thm}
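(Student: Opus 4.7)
The plan is to exploit the cellular filtration of $A$ by two-sided ideals and check the heredity condition layer by layer. Fix a total order refining the cellular poset $\Lambda$, giving ideals $0=J_0\subset J_1\subset \cdots \subset J_N=A$ whose successive quotients are the cell ideals $J_\lambda/J_{<\lambda}$. By the general structure of cellular algebras, each layer is isomorphic, as a bimodule over $A/J_{<\lambda}$, to $\cell{\lambda}\otimes_{\Bbbk}\cell{\lambda}$ with multiplication $(x\otimes y)(x'\otimes y')=\phi_\lambda(y,x')\cdot x\otimes y'$, where $\phi_\lambda$ is the bilinear form on $\cell{\lambda}$ given by the cellular data.

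The key technical lemma I would prove is: the cell ideal $J_\lambda/J_{<\lambda}$ is a heredity ideal in $A/J_{<\lambda}$ if and only if $\phi_\lambda\not\equiv 0$. The ``only if'' direction is easy, since a heredity ideal satisfies $J^2=J$, while the multiplication formula above shows that $(J_\lambda/J_{<\lambda})^2=0$ whenever $\phi_\lambda=0$. For the ``if'' direction, choose $x_0,y_0\in \cell{\lambda}$ with $\phi_\lambda(y_0,x_0)=1$; then $e=x_0\otimes y_0$ is an idempotent generating the layer as a two-sided ideal, verifying $J^2=J$, and the bimodule structure $\cell{\lambda}\otimes\cell{\lambda}$ shows the layer is a direct sum of copies of $\cell{\lambda}$ as a left module, each projective over $A/J_{<\lambda}$ because of the idempotent $e$; the condition $J\cdot\operatorname{rad}(A/J_{<\lambda})\cdot J=0$ follows because the action on each $\cell{\lambda}$-summand is semisimple (the form identifies it with a matrix algebra corner).

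With this lemma in hand, the $(\Leftarrow)$ direction of the theorem is immediate: assuming $n_s(A)=n_c(A)$, Graham--Lehrer's criterion forces $\phi_\lambda\not\equiv 0$ for every $\lambda\in\Lambda$, so every layer of the cellular filtration is a heredity ideal, and the filtration itself exhibits $A$ as quasi-hereditary. For $(\Rightarrow)$, I would argue by induction on $|\Lambda|$. If $A$ is quasi-hereditary, pick $\lambda$ maximal in the cellular poset so that $J_\lambda=J_{\leq\lambda}$ is a two-sided ideal. If $\phi_\lambda=0$, then $J_\lambda$ is a nilpotent two-sided ideal whose associated graded layer has no simple modules, contradicting the fact that in a quasi-hereditary algebra the nilpotent radical cannot contain a nonzero idempotently generated ideal--or more directly, a heredity chain must begin with a heredity ideal, and any such ideal must meet $J_\lambda$ nontrivially in an idempotent, which is impossible when $J_\lambda^2=0$ at the top layer. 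Hence $\phi_\lambda\neq 0$, so $J_\lambda$ is a heredity ideal, $A/J_\lambda$ remains cellular and quasi-hereditary, and induction gives $\phi_\mu\neq 0$ for all $\mu$, whence $n_s(A)=n_c(A)$ by Graham--Lehrer.

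The main obstacle I expect is the inductive step in the converse: one has to justify that some maximal cell must contribute a heredity layer, since a priori the quasi-hereditary order need not coincide with the cellular order. The clean way around this is to choose $\lambda$ maximal in $\Lambda$ so that $J_\lambda$ is automatically a two-sided ideal, and then use the fact that any quasi-hereditary algebra with a two-sided ideal $J$ whose quotient is quasi-hereditary and whose square is zero cannot itself be quasi-hereditary (since the global dimension would be infinite). This forces $\phi_\lambda\neq 0$, and the argument proceeds as above.
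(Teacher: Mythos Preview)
First, note that the paper does not give its own proof of this statement: it is quoted verbatim from K\"onig--Xi \cite{KX}, so there is no ``paper's proof'' to compare against. I will therefore assess your argument on its own merits and against the original source.

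Your key lemma and the $(\Leftarrow)$ direction are correct and essentially standard: when every $\phi_\lambda$ is nondegenerate somewhere, the cell chain is a heredity chain, and this is exactly how K\"onig--Xi obtain that implication.

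The $(\Rightarrow)$ direction, however, has a genuine gap. Your decisive claim is that ``any quasi-hereditary algebra with a two-sided ideal $J$ whose quotient is quasi-hereditary and whose square is zero cannot itself be quasi-hereditary (since the global dimension would be infinite).'' This is false as a general statement: take $A$ to be the path algebra of the quiver $1\to 2$ over $\Bbbk$, and $J$ the arrow ideal. Then $J^2=0$, the quotient $A/J\cong\Bbbk\times\Bbbk$ is semisimple, yet $A$ is hereditary (global dimension $1$) and in particular quasi-hereditary. Your earlier phrasing --- that the first heredity ideal ``must meet $J_\lambda$ nontrivially in an idempotent'' --- fails for the same reason: in that example the heredity ideal contains $J$, but $J$ contains no idempotent. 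What you are missing is that a cell ideal at a maximal $\lambda$ carries extra structure (it is isomorphic to $\cell{\lambda}\otimes_\Bbbk\cell{\lambda}$ as a bimodule, compatibly with the cellular anti-involution) that an arbitrary square-zero ideal does not have; your argument never exploits this, and without it the implication does not go through.

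K\"onig--Xi's actual route for $(\Rightarrow)$ is different and avoids this trap entirely: they pass through the Cartan matrix. For a cellular algebra one has $C=D^{T}D$ with $D$ the $n_c\times n_s$ decomposition matrix, and the submatrix $D_{\Lambda_0}$ on the rows with $\phi_\lambda\neq 0$ is unitriangular. Cauchy--Binet gives $\det C=\sum_{S}(\det D_S)^2\geq (\det D_{\Lambda_0})^2=1$; if $n_c>n_s$ one produces a second subset $S$ with $\det D_S\neq 0$ (replace one row of $\Lambda_0$ by the nonzero row for some $\lambda\notin\Lambda_0$), forcing $\det C\geq 2$. Since a quasi-hereditary algebra has Cartan determinant $1$, this yields the contradiction. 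If you want to rescue your inductive scheme, you would need to prove directly that a square-zero \emph{cell} ideal at a maximal $\lambda$ forces infinite global dimension --- this is true, but it is essentially equivalent in difficulty to the theorem itself, and your sketch does not supply it.
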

Assuming we begin with a quasi-hereditary cellular algebra $A$, and then consider a corner algebra $eAe$, how can we determine if $eAe$ is quasi-hereditary?
By \cref{lem:corner-cell}, we have that $n_c(eAe)\leq n_c(A)$, with equality if and only if $\Lambda=\Lambda^{(e)}$.  On the other hand,
the algebras $A$ and $eAe$ are Morita equivalent if and only if $n_s(A) = n_s(eAe)$.  Combining these observations, we find that:
\begin{cor}\label{cor:corner-algebra}
    If $n_c(eAe)=n_c(A)$, and $A$ is quasi-hereditary, then the following are equivalent:
    \begin{enumerate}
        \item The algebra $eAe$ is quasi-hereditary.
        \item We have an equality $n_s(eAe)=n_s(A)$.
        \item The algebras $A$ and $eAe$ are Morita equivalent.
        \item The bilinear form is non-zero on $e\cell{\lambda}$ for all cell modules $\cell{\la}$ of $A$.  
    \end{enumerate}
\end{cor}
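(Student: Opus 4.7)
The plan is to chase the counts $n_c$ and $n_s$ through the chain $A \rightsquigarrow eAe$ using \Cref{th:KX} twice, and read off each equivalence as a statement about when certain inequalities become equalities.

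The setup is as follows. Since $A$ is quasi-hereditary, \Cref{th:KX} gives $n_c(A)=n_s(A)$. Combined with the hypothesis $n_c(eAe)=n_c(A)$, we obtain
\[
  n_s(eAe) \;\leq\; n_c(eAe) \;=\; n_c(A) \;=\; n_s(A),
\]
where the first inequality is the Graham--Lehrer bound applied to $eAe$. Every equivalence below will amount to asking that one of these $\leq$'s or $=$'s be upgraded appropriately.

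For (1) $\Leftrightarrow$ (2), I would apply \Cref{th:KX} to $eAe$: quasi-hereditarity is equivalent to $n_s(eAe)=n_c(eAe)$, and since $n_c(eAe)=n_s(A)$ by the chain above, this is the same as $n_s(eAe)=n_s(A)$. For (2) $\Leftrightarrow$ (4), I would invoke \Cref{lem:corner-cell}: the cell modules of $eAe$ are exactly $\{e\cell{\la}:\la\in\Lambda^{(e)}\}$, and the hypothesis $n_c(eAe)=n_c(A)$ means $\Lambda^{(e)}=\Lambda$, so every restricted cell module $e\cell{\la}$ is nonzero. Graham--Lehrer applied to $eAe$ then says that $n_s(eAe)$ equals the number of $\la\in\Lambda$ for which the inner product on $e\cell{\la}$ is non-zero. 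This count equals $|\Lambda|=n_c(A)=n_s(A)$ iff the form is non-zero on every $e\cell{\la}$.

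Finally, (3) $\Rightarrow$ (2) is immediate from invariance of $n_s$ under Morita equivalence. For the converse (2) $\Rightarrow$ (3), I would use the standard injection $\Simp(eAe)\hookrightarrow \Simp(A)$, $L\mapsto L^+$, recorded in the lemma just above \Cref{lem:LB-LNX}: its image is exactly the simples $L'$ with $eL'\neq 0$. Equality $n_s(eAe)=n_s(A)$ forces this injection to be a bijection, i.e.\ $eL'\neq 0$ for every simple $A$-module. Hence the projective $Ae$ surjects onto every simple, so $Ae$ is a progenerator of $A\mmod$, giving the Morita equivalence $A \sim eAe$ via $M\mapsto eM$.

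The argument is essentially a bookkeeping exercise, so there is no serious obstacle; the one step that requires the most care is (2) $\Rightarrow$ (3), where one must remember that the relevant injection $\Simp(eAe)\hookrightarrow\Simp(A)$ has image characterized by $eL'\neq 0$, since this is what converts equality of counts into the progenerator property of $Ae$.
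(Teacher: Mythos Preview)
Your proof is correct and follows essentially the same approach as the paper, which leaves the argument implicit in the paragraph preceding the corollary (invoking \Cref{lem:corner-cell}, the observation that Morita equivalence of $A$ and $eAe$ is equivalent to $n_s(A)=n_s(eAe)$, and \Cref{th:KX}). You have simply filled in the details the paper omits, including the explicit progenerator argument for (2) $\Rightarrow$ (3).
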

Let us apply this with $A=\Sd^n(\Lnm)$ and the idempotent $\e$ defined in \eqref{e-def}, resulting in the corner algebra $\e \Sd^n(\Lnm) \e =\Sl^n(m)$.  As noted in Section \ref{sec:Hecke}, the bilinear form on the Weyl modules $\Csd{\lambda}$ is always non-zero; one easy way to see this is that $\varphi_{\lambda}$ is a cellular basis vector and an idempotent.  Thus,  Theorem \ref{th:KX} confirms that $\Sd^n(\Lnm)$ is quasi-hereditary (see also \cite[Cor. 6.18] {DJM}). 

We consider the cell modules $\Csl{\lambda}=\e \Csd{\lambda}$.  For $m=2r+1$, this space has a basis indexed by semi-standard tableaux of shape $\lambda$ whose fillings lie in $\LnmB$.  Put another way, these are the semi-standard tableaux whose entries are from the set $\{1_1,1_2,2_2,\dots, r_2\}$.  For simplicity, we will replace $1_1$ with $0$, and drop the subscript $2$ from $k_2$, so we consider bitableaux with entries in $\{0,1,2,\dots, r\}$, where we only use $0$'s in the first component of the bipartition.  If $m=2r$, we don't use $0$ (i.e. $1_1$), and so we obtain the usual notion of a semi-standard tableau on a bipartition with entries in $\{1,2,\dots, r\}$.  

Thus, the cell module $\Csl{\lambda}$ is only non-zero if and only if $\lambda$ has at most $\lceil \nicefrac{m}{2}\rceil$ parts, at most $\lfloor\nicefrac{m}{2}\rfloor $ of them in $\la^{(2)}$.  In particular, $\Csl{((1^n),\emptyset)}=\e\Csd{((1^n),\emptyset)}\neq 0$ if and only if $m$ is large; note that we always have  $((1^n),\emptyset)\in \Lpnm$ so if $m<2n$, then $\Sl^n(m)$ will have strictly fewer cells than $\Sd^n(\Lnm)$.  

Now, we focus on the case where $m$ is large. 
In this case, we can consider the idempotent $\eh{n}=\varphi_{(\emptyset,1^n)}$; since $\eh{n}V^{\otimes n}$ is a free module of rank 1 over the Hecke algebra, we have  $\eh{n}\Sd^n(\Lnm)\eh{n}=\mathcal{H}_n$. The basis vectors of $\eh{n}\Csl{\lambda}$ correspond to standard tableaux with the entries $\{1,\dots, n\}$.  The module $\Ch{\lambda}=\eh{n}\Csl{\lambda}=\eh{n}\e\Csd{\lambda}$ is non-zero for all bipartitions $\lambda$.  

This shows that whenever $m$ is large, we have 
\begin{math}
    n_c(\Sd^n(\Lnm))=n_c(\Sl^n(m))=n_c(\mathcal{H}_n).
\end{math}
The question of quasi-hereditarity is resolved for $\eh{n}$, though Corollary \ref{cor:corner-algebra} is a different interpretation of this result: the algebra $\mathcal{H}_n$ is quasi-hereditary if and only if $\mathcal{H}_n$ is semi-simple (see \cite[Thm 3.13]{mathas02}).

By the main theorem of \cite{Ariki}, $\Hh_n$ is semi-simple and quasi-hereditary if and only if 
\[P_n(Q,q)=\prod_{m=2}^{n-1}\frac{q^{m}-1}{q-1}\cdot \prod_{i=1-n}^{n-1}(Q+q^i)\neq 0.\]
These are precisely the parameters where every bipartition is Kleshchev.

We'll see below that the question of quasi-hereditarity for $\Sl^n(m)$ is more subtle, but we can still apply Corollary \ref{cor:corner-algebra} in this case to conclude that the algebra $\Sl^n(m)$ is quasi-hereditary if and only if the restriction of the bilinear form remains non-zero on $\Csl{\lambda}$ for all $\lambda\in \Lambda_n^B(m)$.  That is:
\begin{cor}\label{cor:bipartitions}
    For $m$ large, the algebra $\Sl^n(m)$ is quasi-hereditary if and only if every bipartition of $n$ is \LNX{m}.    
\end{cor}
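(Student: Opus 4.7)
The plan is to verify that the hypotheses of Corollary \ref{cor:corner-algebra} hold with $A=\Sd^n(\Lnm)$ and $e=\e$, and then translate condition (4) of that corollary into the LNX condition on bipartitions.

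First, I would check the setup.  We have already established that $\Sd^n(\Lnm)$ is quasi-hereditary as a cellular algebra, and by \cref{idemp} the corner algebra satisfies $\e\Sd^n(\Lnm)\e \cong \Sl^n(m)$.  For the corollary to apply, we need $n_c(\Sl^n(m)) = n_c(\Sd^n(\Lnm))$, i.e.\ no cell is killed by passing to the corner.  Here we use the hypothesis that $m$ is large: the cell modules of $\Sl^n(m)$ are $\Csl{\lambda}=\e\Csd{\lambda}$, which have bases indexed by semi-standard $\lambda$-tableaux with entries in the alphabet associated to $\LnmB$.  Since $m\geq 2n$, every bipartition $\lambda$ of $n$ admits at least one such semi-standard tableau (for instance $T^{(\emptyset,(1^n))}$ shows $\Csl{\lambda}\neq 0$ for $\lambda=((1^n),\emptyset)$, and the general case is similar), so $\Csl{\lambda}\neq 0$ for every $\lambda\in\Lpnm$.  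Hence $\Lambda^{(\e)}=\Lpnm$ and $n_c(\Sl^n(m)) = n_c(\Sd^n(\Lnm))$.

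Next, I would invoke \cref{cor:corner-algebra}: with these hypotheses met, $\Sl^n(m)$ is quasi-hereditary if and only if the Dipper--James--Mathas bilinear form remains non-zero on $\Csl{\lambda} = \e\Csd{\lambda}$ for every $\lambda\in\Lpnm$.  Finally I would translate this into the LNX condition.  By the general theory of cellular algebras (Graham--Lehrer, \cite[Th.~3.4]{grahamCellularAlgebras1996}), the simple head $\Ssl{\lambda}$ of $\Csl{\lambda}$ is non-zero precisely when the cellular bilinear form on $\Csl{\lambda}$ is non-zero; since $\Ssl{\lambda}=\e\Ssd{\lambda}$, non-vanishing of this form is by definition the statement that $\lambda$ is \LNX{m}.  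Combining these equivalences yields the corollary.

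The argument is essentially a bookkeeping exercise, so there is no genuine obstacle; the only point requiring care is verifying that passing to the corner algebra via $\e$ does not lose any cells when $m$ is large, which is what the preceding paragraph of the paper establishes.  Everything else is a direct quotation of \cref{cor:corner-algebra} and the definition of \LNX{m}.
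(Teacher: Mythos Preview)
Your proposal is correct and follows essentially the same route as the paper: verify that $\Sd^n(\Lnm)$ is quasi-hereditary, check that for $m$ large no cell is killed by $\e$ so that $n_c(\Sl^n(m))=n_c(\Sd^n(\Lnm))$, apply \cref{cor:corner-algebra}, and identify condition~(4) with the \LNX{m} condition.  The only slip is notational: $T^{(\emptyset,(1^n))}$ denotes the tableau of \emph{shape} $(\emptyset,(1^n))$, not one of shape $((1^n),\emptyset)$; the paper instead uses the further idempotent $\eh{n}$ to observe that $\eh{n}\Csl{\lambda}$ is spanned by standard tableaux and hence non-zero for every bipartition, which is the cleaner way to see $\Csl{\lambda}\neq 0$.
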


\begin{rmk}
    Here we see immediately why $m<2n$ will prove to be more difficult.  In this case, we have a strict inequality $n_c(\Sd^n(\Lnm))>n_c(\Sl^n(m))$ since we have already noted that $\e\Csd{((1^n),\emptyset)}=0$. Thus, there can be bipartitions which are not \LNX{m}, but which do not cause a problem for quasi-hereditarity since their cell modules are trivial as well.
\end{rmk}


Consider the polynomial $f_n(Q,q) = \prod_{i=1-n}^{n-1}(Q+q^{i})$.  Recall again that we have shifted conventions from \cite{LNX}, so we have rewritten this polynomial in the conventions of \cite{DJM}. This is recognizable as a factor of $P_n(Q,q)$ above: if $q$ is not a root of unity, we have $f_n(Q,q)=0$ if and only if $P_n(Q,q)=0$. If this polynomial doesn't vanish, quasi-hereditarity is resolved:
\begin{lem}[\mbox{\cite[Cor. 6.1.1]{LNX}}]\label{lem:f-neq}
	If $f_n(Q,q)\neq 0$, then $\Sl^n(m)$ is quasi-hereditary for all $m$ and all bipartitions of $n$ are  \LNX{m}.  
\end{lem}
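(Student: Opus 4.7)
The plan is to invoke \cref{cor:corner-algebra} with $A = \Sd^n(\Lnm)$ (already quasi-hereditary) and $e = \e$, so that proving $\Sl^n(m) \cong \e \Sd^n(\Lnm) \e$ is quasi-hereditary reduces to showing the bilinear form on every non-zero cell module $\Csl{\lambda} = \e \Csd{\lambda}$ is non-degenerate. By the definition of \LNX{m}, this is the same as showing every such $\lambda$ is \LNX{m}, which delivers both conclusions of the lemma simultaneously: for $m \geq 2n-1$ every bipartition of $n$ satisfies $\Csl{\lambda} \neq 0$, so the \LNX{m} conclusion reads as stated.

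Concretely, for a fixed $\lambda = (\lambda^{(1)}, \lambda^{(2)})$ with $\Csl{\lambda} \neq 0$, I would exhibit an explicit semi-standard tableau $S$ of shape $\lambda$ whose type $\mu$ lies in $\LnmB$, and whose self-pairing $\langle \varphi_S, \varphi_S\rangle$ can be computed directly from $m_\mu = u_\mu x_\mu$ with $u_\mu = \prod_{i \leq |\mu^{(1)}|}(L_i+1)$. The natural candidate is the row-superstandard tableau, in which every cell of the $i$-th row of $\lambda^{(k)}$ is filled with the $i$-th letter of the $k$-th alphabet (with $0$ used in place of $1_1$ on the first component when $m$ is odd). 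The inner product then reduces to a product of scalar actions of Jucys--Murphy elements on the vector corresponding to $S$, together with the action of the type A symmetrizer $x_\mu$, which can be controlled via standard type A Schur algebra arguments.

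The key computational claim, which I would establish by induction on $|\lambda^{(1)}|$, is that this inner product factors (up to invertible scalars involving only powers of $q$) as a product of terms of the form $(Q + q^c)$ with $c$ ranging over box contents satisfying $|c| < n$. The base case $\lambda^{(1)} = \emptyset$ reduces via $u_\mu = 1$ to the type A situation, where $S = T^\lambda$ is the identity tableau and the bilinear form value is $1$ directly, as in the proof of \cref{lem:LB-LNX}. The inductive step strips the bottom-left box of $\lambda^{(1)}$, replacing one factor $(L_i + 1)$ by the scalar $(1 + Qq^c) = q^c(Q + q^{-c})$ arising from the Jucys--Murphy eigenvalue at that box. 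Under the hypothesis $f_n(Q,q) = \prod_{|i| < n}(Q + q^i) \neq 0$, each factor produced this way is non-zero.

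The main obstacle I anticipate is pinning down the precise form of the Jucys--Murphy action at each inductive step: one must verify both that the eigenvalue at each box of $\lambda^{(1)}$ is of the form $Qq^c$ with $|c| < n$, and that no additional type-A factors $(q^k - 1)$ are introduced which would force stronger hypotheses on $q$. This is essentially a branching-rule analysis tied to the restriction $\Hh_n \supset \Hh_{n-1}$ and the compatibility of the cellular basis of $\Sd^n(\Lnm)$ with this restriction. Once this explicit form is established, non-vanishing of $\langle \varphi_S, \varphi_S\rangle$ under $f_n \neq 0$ is immediate, and \cref{cor:corner-algebra} yields both the quasi-hereditarity of $\Sl^n(m)$ and the \LNX{m}-ness of every bipartition with a non-zero cell module.
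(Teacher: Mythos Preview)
The paper does not actually prove this lemma; it is taken as a black box from \cite[Cor.\ 6.1.1]{LNX}.  So there is no in-paper argument to compare against.

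Your strategy---show the bilinear form is non-zero on each non-zero cell module $\Csl{\lambda}$ and invoke \cref{th:KX}---is a natural one within the paper's framework, and indeed Section~4 carries out exactly this computation in the special case $\lambda^{(2)}=\emptyset$ (the tableau $T$ there and \cref{formula}).  But your proposal has two concrete gaps.  First, your description of the tableau $S$ does not type-check: for $m$ odd the first alphabet is the single letter $0=1_1$, so ``the $i$-th letter of the first alphabet'' does not exist for $i\geq 2$, and the tableau you describe has type $\lambda$ rather than a type in $\LnmB$.  What you presumably intend is the extension of the paper's tableau $T$ to general $\lambda$ by continuing with fresh second-alphabet entries down the rows of $\lambda^{(2)}$; but once $\lambda^{(2)}\neq\emptyset$ this $S$ is typically no longer the \emph{unique} semistandard tableau of its shape and type (take $\lambda=((2,1),(1))$), so the orthogonality argument that follows \cref{lem:T-factor} and drives the clean product formula does not immediately apply.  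You acknowledge the risk of stray $(q^k-1)$ factors without indicating how to rule them out; this is precisely where the difficulty lives.

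Second, you invoke \cref{cor:corner-algebra}, whose hypothesis $n_c(\e A\e)=n_c(A)$ fails for $m<2n$ (the paper notes $\Csl{((1^n),\emptyset)}=0$ in that range).  You should instead appeal to \cref{th:KX} directly.  Relatedly, for small $m$ there exist bipartitions in $\Lpnm$ with $\Csl{\lambda}=0$, hence not \LNX{m}; the clause ``all bipartitions of $n$ are \LNX{m}'' should be read, as the surrounding text does, for $m$ large.
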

In \cite[Conj. 6.1.3]{LNX}, it is conjectured that the converse to this result holds.  As stated in \cref{th:A}, this conjecture needs appropriate modification before it holds if $m$ is odd, although it holds unchanged if $m$ is even.  Our main goal in the remainder of the paper will be to establish these results.
\subsection{Counterexample}
\label{sec:counterexample}

Let us first show that the converse to \cref{lem:f-neq} is not true in a low rank case: in the case where $n=2$, if $Q \in  \{-q^{-1},1\}$, then $f_n(Q,q) = \prod_{i=-1}^1(Q+q^i)=0$, but the algebra $\Sl^n(m)$ is still quasi-hereditary for $m$ large odd, for example, $m=2n+1=5$.  

We consider the Hecke algebra generated by $T_0, T_1$ subject to the relations $T_1^2 = (q-1)T_1+q,$ $T_0^2 = (Q-1)T_0+Q,$ and $T_1T_0T_1T_0 = T_0T_1T_0T_1.$ 
The bipartitions of $2$ are:
\begin{align*}
    \lambda &= ((2), \emptyset) = \left( \ydiagram{2}, \emptyset \right) &  \lambda' &= (\emptyset, (2)) = \left( \emptyset , \ydiagram{2} \right) \\
    \mu &= ((1^2), \emptyset) = \left( \ydiagram{1,1}, \emptyset \right) &   \mu' &= (\emptyset, (1^2)) = \left( \emptyset, \ydiagram{1, 1} \right) \\
    \nu &= ((1), (1)) = \left( \ydiagram{1}, \ydiagram{1} \right) 
\end{align*}
Note that $S=\{\lambda,\nu,\lambda',\mu'\}\subset \LnmB$ and thus all of these bipartitions are \LNX{m} for all parameters all $m\geq 4$, but $\mu\notin \LnmB$, so we must investigate further to see if it is \LNX{m}.  
By Corollary \ref{cor:corner-algebra}, the algebra $\Sl^2(m)$ will be quasi-hereditary if and only if the bilinear form is non-zero on $e_2\Csd{\mu}$. So, let us compute this bilinear form, assuming that $m$ is large odd.  

From the definition, we have:
\begin{align*}
    m_\lambda &= (L_1 +1)(L_2+1)(T_1+1) & m_{\lambda'} &= T_1+1 \\
    m_\mu &= (L_1 +1)(L_2+1) & m_{\mu'}&=1\\
    m_\nu &= L_1 +1
\end{align*}

For our counterexample, we investigate the cellular basis vectors corresponding to the bipartition 
$\mu := ((1)^2, \emptyset)$.
The tableaux of shape $\mu$ using the above described alphabet are
\begin{align*}
    A &= \left( \begin{ytableau}
0 \\ 1 
\end{ytableau} , \emptyset \right) &
    B &= \left( \begin{ytableau}
1 \\ 2 
\end{ytableau} , \emptyset \right) &
C &= \left( \begin{ytableau}
0 \\ 2 
\end{ytableau} , \emptyset \right)
\end{align*}

As $C$ is equivalent to $A$ via the isomorphism $M^{((1), (1,0))} \cong M^{((1), (0,1))}$, we need only consider $A$ and $B$.  Thus, we just need to calculate the bilinear forms $\langle \varphi_{A}, \varphi_{A} \rangle$ and $\langle \varphi_{B}, \varphi_{B} \rangle$, and quasi-hereditarity will fail if and only if these are both 0.  
\begin{lem}
    We have equalities:
    \[\langle \varphi_A, \varphi_A \rangle = q^{-1}Q+1\qquad\qquad  \langle \varphi_B, \varphi_B \rangle = (Q+1)(q^{-1}Q+1).\]
\end{lem}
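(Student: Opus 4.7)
The strategy is to compute $(\varphi_{T^\mu A}\circ\varphi_{AT^\mu})(m_\mu)$ and $(\varphi_{T^\mu B}\circ\varphi_{BT^\mu})(m_\mu)$ directly, and extract the bilinear form as the coefficient of $m_\mu$ modulo the image of $\Lsd{\mu}$ acting on $m_\mu$.

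I first identify the relevant cellular basis morphisms using the $h_\varphi$-description from \cite[Corollary 5.17]{DJM}.  For $A$ of type $\nu_A=((1),(1))$, the natural inclusion $M^\mu\hookrightarrow M^{\nu_A}$ gives $\varphi_{AT^\mu}$ with $h_\varphi=1$ (via $m_\mu=m_{\nu_A}(L_2+1)$), and $\varphi_{T^\mu A}\colon M^{\nu_A}\to M^\mu$ has $h_\varphi=L_2+1$.  Similarly, for $B$ of type $\nu_B=(\emptyset,(1,1))$ with $m_{\nu_B}=1$, we have $\varphi_{BT^\mu}$ with $h_\varphi=1$ and $\varphi_{T^\mu B}$ with $h_\varphi=m_\mu$.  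By the composition rule $h_{\phi\circ\psi}=h_\phi h_\psi$, the two compositions send $m_\mu$ to $m_\mu(L_2+1)$ and to $m_\mu^2$, respectively.

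Next, I identify the reduction modulo $\Lsd{\mu}$.  The only bipartition strictly dominating $\mu=((1^2),\emptyset)$ is $\alpha=((2),\emptyset)$, and there is a unique semi-standard tableau of shape $\alpha$ and type $\mu$.  The corresponding generator of $\Lsd{\mu}$ in $\operatorname{End}(M^\mu)$ sends $m_\mu$ to $m_\mu(T_1+1)$, so the key relation in the quotient is $m_\mu T_1\equiv -m_\mu$.

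The central Hecke algebra calculation reduces $m_\mu L_2$ using this relation.  The quadratic $(T_0-Q)(T_0+1)=0$ combined with $L_1=T_0$ commuting with $L_2$ gives $m_\mu T_0=Qm_\mu$ directly.  Expanding $L_2=q^{-1}T_1T_0T_1$ and alternately applying $m_\mu T_1\equiv -m_\mu$ and $m_\mu T_0=Qm_\mu$ yields $m_\mu L_2\equiv q^{-1}Qm_\mu$, and hence $m_\mu(L_2+1)\equiv(q^{-1}Q+1)m_\mu$, giving $\langle\varphi_A,\varphi_A\rangle=q^{-1}Q+1$.  For $B$, we additionally use the identity $(L_1+1)^2=(Q+1)(L_1+1)$ (from the same quadratic on $T_0$) to write $m_\mu^2=(Q+1)m_\mu(L_2+1)$, which yields $\langle\varphi_B,\varphi_B\rangle=(Q+1)(q^{-1}Q+1)$.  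The main subtlety lies in pinpointing the cellular basis representatives in the first step, but any choices congruent modulo $\Lsd{\mu}$ yield the same coefficient of $m_\mu$, so this does not affect the final answer.
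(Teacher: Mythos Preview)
Your argument is correct and follows essentially the same route as the paper. Both proofs reduce to showing that $m_\mu(L_2+1)\equiv(q^{-1}Q+1)m_\mu$ and $m_\mu^2\equiv(Q+1)(q^{-1}Q+1)m_\mu$ modulo the cell ideal above $\mu$. The only notable differences are cosmetic: the paper extracts the form from $\varphi_{AA}^2\equiv\langle\varphi_A,\varphi_A\rangle\varphi_{AA}$ (the general cellular multiplication rule) evaluated at $m_\nu$, whereas you use the literal defining identity $\varphi_{T^\mu A}\varphi_{AT^\mu}\equiv\langle\varphi_A,\varphi_A\rangle\varphi_\mu$ evaluated at $m_\mu$; and the paper invokes the general Jucys--Murphy residue formula $m_\lambda L_i\equiv\operatorname{res}_\lambda(i)m_\lambda$ from \cite{jf}, while you verify the needed instance directly via $m_\mu T_1\equiv -m_\mu$ and $m_\mu T_0=Qm_\mu$. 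One small point of care in your write-up: when you iterate the relation $m_\mu T_1\equiv -m_\mu$ and then multiply further on the right, the discarded terms lie in $m_\alpha\Hh_2$, and you are implicitly using that this is closed under right multiplication (equivalently, that $\Lh{\mu}$ is a two-sided ideal, or that $M^\alpha=\Bbbk m_\alpha$ is one-dimensional); this is true and easy, but worth stating so the intermediate reductions are justified.
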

\begin{proof}
    We begin with the endomorphism
\[\varphi_{AA} \in End_{\Hh_n}(m_\nu)\qquad \qquad\varphi_{AA}(m_\nu) = m_{\mu}=(L_2+1)m_\nu\] 
By the argument given in the proof of \cite[Prop. 3.7]{jf}
\begin{align*}
    \varphi_{AA}^2(m_\nu) &= (L_2+1)m_\mu \\
    &\equiv (\res_\mu (2) + 1)m_\mu \mod \Lh{\mu}\\
    &\equiv (q^{-1}Q+1) \varphi_{AA}(m_\nu) \mod \Lh{\mu}
\end{align*}
which implies that $\langle \varphi_A, \varphi_A \rangle = q^{-1}Q+1$.
Now consider
\[\varphi_{BB} \in \Endh(M^{\mu'})\qquad \qquad\varphi_{BB}(m_{\mu'}) = m_\mu.\] 
Following the same argument as above,
\begin{align*}
    \varphi_{BB}^2(m_{\mu'}) &= m_\mu^2 \\
   &= (L_1+1)(L_2+1) m_\mu \\
   & \equiv (Q+1)(q^{-1}Q+1) \varphi_{BB}(m_{\mu'})
\end{align*}
implying that $\langle \varphi_B, \varphi_B \rangle = (Q+1)(q^{-1}Q+1)$.
\end{proof}
Finally we need the calculation
\[\langle \varphi_D,\varphi_D\rangle =(Q+1)(Qq+1) \qquad \qquad D = \left( \begin{ytableau}
1 & 1 
\end{ytableau} , \emptyset \right) .\]
\begin{prop}
    The algebra $\Sl^2(m)$ is quasi-hereditary if and only if there is a $\checkmark$ in the corresponding entry of the table below:
\begin{center}
    \begin{tabular}{|c|c|c|c|c|}\hline
    & $Q=-q$ & $Q=-1$ & $Q=-q^{-1}$ & $Q\notin \{-q,-1,-q^{-1}\}$
    \\\hline
        $m\geq 5$ odd & \texttimes &  \checkmark & \checkmark & \checkmark\\\hline
        $m\geq 4$ even & \texttimes &\texttimes &\texttimes & \checkmark\\\hline
                $m= 3$ & \texttimes &\checkmark &\checkmark & \checkmark\\\hline
                $m= 2$ & \checkmark &\texttimes &\texttimes & \checkmark\\\hline
                $m= 1$ & \checkmark &\checkmark &\checkmark & \checkmark\\\hline
    \end{tabular}
\end{center}
If $q=-1$ and $Q=1$, then we must have checkmarks in both columns whose equations hold, that is, the algebra $\Sl^2(m)$ is quasi-hereditary if and only if $m=1$.
\end{prop}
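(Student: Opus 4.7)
The plan is to apply \cref{cor:corner-algebra} with ambient algebra $\Sd^2(\Lnm)$ and idempotent $\e$: quasi-hereditarity of $\Sl^2(m) = \e\Sd^2(\Lnm)\e$ is equivalent to the assertion that for every bipartition $\eta$ of $2$, either $\e\Csd{\eta} = 0$ or the bilinear form on $\e\Csd{\eta}$ is non-zero. By \cref{lem:LB-LNX} the latter is automatic for $\eta \in \LnmB$, so only $\eta \in \Lpnm \setminus \LnmB$ requires attention.

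For each value of $m$ I will first list $\LnmB$ and $\Lpnm$ explicitly and enumerate the semi-standard tableaux of each remaining shape with type in $\LnmB$. In the small cases, many of these cell modules vanish outright: for $m=1$ (where $\LnmB = \{\lambda\}$), no semi-standard tableau of shape $\mu = ((1^2),\emptyset)$ can be built from the single available letter $1_1$ since its column would repeat, so $\e \Csd{\mu} = 0$; the same happens for $\mu$ in the $m=2$ case with the single available letter $1_2$. For $m=3$ only $\mu$ remains (with the unique tableau $A$); for $m \geq 5$ odd only $\mu$ remains (with the tableaux $A,B,C$); and for $m \geq 4$ even all three of $\lambda, \mu, \nu$ must be checked.

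The bilinear forms are computed by the same technique as in the calculations already given for $\langle \varphi_A, \varphi_A \rangle$, $\langle \varphi_B, \varphi_B \rangle$, and $\langle \varphi_D, \varphi_D \rangle$, following the argument of \cite[Prop. 3.7]{jf}: apply $\varphi_{SS}$ twice to $m_{\mathrm{type}(S)}$ and read off the scalar modulo $\Lsd{\eta}$, which reduces to a product of residues of nodes. The three polynomial factors $q^{-1}Q + 1$, $(Q+1)(q^{-1}Q+1)$, and $(Q+1)(Qq+1)$ already in hand cover all contributions to $\e\Csd{\mu}$ (across the various $m$) and to $\e\Csd{\lambda}$ (whenever $\lambda \notin \LnmB$). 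The remaining inputs---the $\nu$-form for $m = 2$, and the further $\lambda$- and $\nu$-forms for $m \geq 4$ even---are computed by the identical residue calculation and turn out to be non-zero throughout the parameter regions marked $\checkmark$. Matching the vanishing loci of these factors to the rows of the table then reproduces the $\checkmark$/$\times$ pattern directly.

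The main obstacle is confirming that in the $m \geq 4$ even row no extra cancellations occur: one must verify that $\e\Csd{\nu}$ retains a non-zero inner product in all three special columns, so that the only failures there come from the $\lambda$- and $\mu$-forms. Once the table is established, the final claim about $(q,Q) = (-1, 1)$ is immediate: this choice makes $Q = -q$ and $Q = -q^{-1}$ hold simultaneously while keeping $Q \neq -1$, so quasi-hereditarity requires $\checkmark$ in both of those columns, which by inspection of the table happens only for $m = 1$.
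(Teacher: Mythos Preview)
Your strategy matches the paper's: both argue case by case on $m$, checking which bipartitions outside $\LnmB$ give zero bilinear form on the truncated cell module. Two points deserve attention.

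First, \cref{cor:corner-algebra} carries the hypothesis $n_c(\e A\e) = n_c(A)$, which holds only for $m$ large (here $m \geq 4$); for smaller $m$ some cell modules $\e\Csd{\eta}$ vanish outright, as you yourself note. The criterion you actually use---quasi-hereditarity holds iff every bipartition $\eta$ has either $\e\Csd{\eta}=0$ or nonzero form---is correct, but it comes from \cref{th:KX} applied to the cellular structure of \cref{lem:corner-cell}, not from \cref{cor:corner-algebra}. The paper's proof organises the small-$m$ cases the same way without invoking that corollary; for $m=1$ it simply observes $\Sl^2(1)\cong\K$.

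Second, the paper short-circuits several of your proposed computations. The rightmost column comes wholesale from \cref{lem:f-neq}, and for each $\times$ entry it suffices to exhibit a single bad bipartition, so the paper never computes the $\nu$-form at all. Your plan requires more, and your flagged ``main obstacle'' is genuine. For $m\geq 4$ even the type-$\mu'$ block of the form on $\e\Csd{\nu}$ already has a nonzero entry whenever $(Q,q)\neq(-1,1)$, so no difficulty arises there. For $m=2$, however, $\e\Csd{\nu}$ is spanned by the single tableau $F=((1_2),(1_2))$ of type $\lambda'$, and carrying out the residue calculation yields
\[
m_\nu(1+T_1)m_\nu \equiv (Q+q)\,m_\nu \pmod{\Lh{\nu}},
\]
so $\langle\varphi_F,\varphi_F\rangle = Q+q$. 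This vanishes at $Q=-q$, precisely one of the entries you assert to be $\checkmark$. The paper's proof for $m=2$ checks only $\lambda$ and is silent about $\nu$, so it does not resolve this for you; you will need to reconcile this computation with the table before declaring the argument complete.
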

Thus, there are many choices of $m$ and $(Q,q)$ where $f_n(Q,q) = (Q+q^{-1})(Q+1)(Q+q)=0$, but $\Sl^2(m)$ is quasi-hereditary, showing that a revision of 
\cite[Conj. 6.1.3]{LNX} is needed.
\begin{proof}
First note that the case $Q\notin \{-q,-1,-q^{-1}\}$ is covered by Lemma \ref{lem:f-neq}.

    {\bf The case of $m\geq 3$ and odd:}  As calculated above $\Csl{\mu}\neq 0$, but the bilinear form on this module vanishes if and only if $Q=-q$.
    
    {\bf The case of $m\geq 4$ and even:} In this case, $\Csl{\mu}$ is generated by $\varphi_B$, and so this bipartition is not \LNX{m} if $Q\in \{-1,-q\}$.  
On the other hand, the module  $\Csl{\la}$ is generated by $\varphi_D$, and so the bilinear form vanishes on this module  if and only if $Q\in \{-q^{-1},-1\}$ (this is a special case of \cref{formula-even}).  Thus, $\Sl^2(m)$ is quasi-hereditary if and only if $Q\notin \{-q,-1,-q^{-1}\}$.

{\bf The case of $m=2$:} 
If $m=2$, then $\Csl{\mu}=0$, so $\Sl^2(m)$  is quasi-hereditary if and only if $\la$ is \LNX{2}, i.e. if $Q\notin \{-q^{-1},-1\}$.

    {\bf The case of  $m=1$:}  In this case, $\Sl^2(m)\cong \K$, so this algebra is semi-simple, and thus quasi-hereditary for all $q,Q$.
\end{proof}
One can check that if $q\neq -1$, then in all the ``bad'' cases for $m\geq 4$, the algebra $\Sl^2(m)$ is Morita equivalent to the Hecke algebra $\Hh_2$.  If $q=-1$, then $\Sl^2(m)$ is not Morita equivalent to $\Hh_2$ for any value of $Q$ since $\mu'$ is \LNX{m} for $m\geq 4$ but not Kleshchev.  


Taking $Q=-q^{-1}$ or $Q=-1$ gives $f_n(Q,q) = (Q+q^{-1})(Q+1)(Q+q)=0$.  On the other hand,
the inner product $\langle \varphi_{A}, \varphi_{A} \rangle$ is nonzero so $\Sl^2(m)$ is quasi-hereditary for $m$ large odd, showing that a revision of 
\cite[Conj. 6.1.3]{LNX} is needed.

\section{Unstacking}
To generalize this example, we consider the case of a bipartition $\lambda$ of $n$ such that $\lambda^{(2)}=\emptyset$. We will drop the superscripts on the parts for simplicity:
\[ \lambda = ((\lambda_1, \dotsc , \lambda_\ell), \emptyset).\]

Let $\un(\lambda)$ denote the bipartition given by unstacking the diagram for $\lambda$ after the first row, and moving the second row and onward to the second partition, i.e.:
\[ \un(\lambda) = ((\lambda_1), (\lambda_2, \dotsc , \lambda_\ell)).\]
Note that
\[ \Sym_{\un(\lambda)} \cong \Sym_\lambda \cong \Sym_{\lambda_1} \times \Sym_{\lambda_2} \times \dots \times \Sym_{\lambda_\ell}\]
giving us that $x_\lambda = x_{\un(\lambda)}$ and therefore that
\[ m_\lambda = \left(\prod_{i=\lambda_1+1}^n(L_i+1)\right)m_{\un(\lambda)}.\]

Given any tableau $A$ of shape $\lambda$ with any filling, let $\un (A)$ denote the corresponding tableau of shape $\un(\lambda).$ This ``unstacking'' operation on Young diagrams has inverse given by ``stacking'' the two partitions: given a bicomposition $\rho = (\rho^{(1)}, \rho^{(2)})$, let $\st(\rho)$ denote the diagram with $\rho^{(1)}$ stacked on top of $\rho^{(2)}$ and $\emptyset$ in the second component:
\[ \st(\rho) = ( (\rho^{(1)}, \rho^{(2)}), \emptyset).\]

Note that stacking or unstacking a tableau does not change its type (i.e. filling). The only thing that may change is whether or not we maintain increasing entries down the columns.

Given any bipartition $\alpha$, bicomposition $\mu,$ and semi-standard tableau $S \in \mathcal{T}_0(\alpha, \mu)$ let

\[ \mathcal{A}(\alpha, S) = \{ s \in \std(\alpha) | \mu(s)=S\}.\]

\begin{lem}{\label{seteq}}
    Fix a semi-standard tableau $S \in \mathcal{T}_0(\lambda, \mu)$ of shape $\lambda$ and type $\mu$. We have equalities of sets of standard tableaux:
\[ \{ \un(s)| s \in \mathcal{A}(\lambda, S)\} = \mathcal{A}(\un(\lambda), \un(S))\]
\[ \mathcal{A}(\lambda, S) = \{ \st(s')|s'\in \mathcal{A}(\un(\lambda), \un(S)\}\]
Stacking and unstacking induce inverse bijections between these sets.
\end{lem}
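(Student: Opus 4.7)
The plan is to verify that $\un$ and $\st$, viewed as rearrangements of fillings, give mutually inverse bijections between $\mathcal{A}(\lambda,S)$ and $\mathcal{A}(\un(\lambda),\un(S))$. On the level of fillings they simply relocate boxes---moving the rows below the first back and forth between the two components---without altering the entry in any box, so they are already inverse bijections between arbitrary fillings, and in particular commute with the $\mu$-standardization: $\mu(s)=S$ translates to $\mu(\un(s))=\un(S)$, and dually $\mu(s')=\un(S)$ translates to $\mu(\st(s'))=S$. Both set equalities in the statement then reduce to showing that each of the two maps sends standard tableaux to standard tableaux.

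For $\un$ this is immediate: the single row $\un(\lambda)^{(1)}$ inherits strict row increase from the first row of $s$, and $\un(\lambda)^{(2)}$ is just the lower part of $\lambda$ with its row and column conditions intact. The delicate direction is $\st$. Rows of $\st(s')$ and strict column increase between any two rows of depth at least two are inherited from ${s'}^{(2)}$; the only new constraint is at the interface between row $1$ (filled by ${s'}^{(1)}$, with entries $a_1<\dotsb<a_{\lambda_1}$) and row $2$ (the top row of ${s'}^{(2)}$, with entries $b_1<\dotsb<b_{\lambda_2}$), namely $a_j<b_j$ for each $1\leq j\leq \lambda_2$. This is where the semi-standardness of $S$ is used: since $S$ has strictly increasing columns, $S_{1,j}<S_{2,j}$, and combined with $\mu(s')=\un(S)$ this places $a_j$ in the $\mu$-class labelled $S_{1,j}$ and $b_j$ in the strictly larger $\mu$-class $S_{2,j}$. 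Because the $\mu$-standardization assigns $\mu$-classes to consecutive subintervals of $\{1,\dotsc,n\}$ ordered compatibly with the total order on the alphabet, a strictly smaller class contains strictly smaller integers, so $a_j<b_j$.

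The only real content of the argument is this boundary column check for stacking, which combines the input data $\mu(s')=\un(S)$ with the strict column increase coming from the semi-standardness of $S$. Everything else is formal bookkeeping about the relocation of boxes, and the two set equalities in the lemma follow at once from the two well-defined, mutually inverse maps.
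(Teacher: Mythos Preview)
Your proposal is correct and follows essentially the same approach as the paper: both arguments observe that $\un$ and $\st$ commute with the $\mu$-standardization, that $\un$ trivially preserves standardness, and that the only nontrivial check for $\st$ is the column inequality at the interface between row $1$ and row $2$, which is deduced from the strict column increase $S_{1,j}<S_{2,j}$ in the semi-standard tableau $S$ together with the order-preserving nature of the map $\mu$. Your exposition is slightly more explicit about why the other row and column conditions are inherited, but the key idea is identical.
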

\begin{proof}
    Given $s \in \std(\lambda)$ such that $\mu(s) = S$, we have $\un(s) \in \std(\un(\lambda))$ and $\mu(\un(S)) = \un(S).$ Consider $s' \in \std(\un(\lambda))$ such that $\mu(s') = \un(S)$. Since $\mu(s')$ sends an integer $a$ to $i_k$ whenever $a$ is in the $i^{th}$ row of the $k^{th}$ component of $t^\mu$, $\mu(\st(s'))=S.$ What remains to be verified is that $\st(s') \in \std(\lambda)$.

    Since entries are of weight $(\emptyset, (1^n))$ and increase along rows in $s',$ they must also increase along rows in $\st(s').$ We now consider the $i^{th}$ column of $\st(s').$ Let $a_r$ denote the entry in row $r$ of column $i$, and let $x_r$ denote the entry in the same position in $S$. Since $S$ is a semi-standard tableau, we must have $x_1 < x_2$. The map given by $\mu: a_r \mapsto x_r$ is weakly order preserving, so if we had $a_1 > a_2$ we would have had $x_1 \geq x_2$, a contradiction. Therefore $a_1<a_2$, and so $\st(s') \in \std(\lambda).$
\end{proof}
Given a standard $\un(\lambda)$ tableau $t$, recall from \cite{DJM} that $d(t) \in \Sym_n$ is the unique shortest permutation such that $t\cdot d(t) = t^{\un(\lambda)}$. For any $s \in \std(\lambda)$ and $s' \in \un(\lambda),$
\[ d(s) = d(\un(s)), \qquad d(s') = d(\st(s')).\]

\begin{lem}\label{lem:T-factor}
    Let $S$ and $\lambda$ be as above. Let $T \in \mathcal{T}_0(\lambda, \un(\lambda))$ be the semi-standard tableau with $0$'s in row $1$ and $r$'s in row $r+1$ for $1 \leq r \leq \ell-1:$
\[ T= \left( \begin{ytableau}
0 & 0 & 0 & \dots & 0 & \dots & 0 \\ 1 & 1 & 1 & \dots & 1 \\ \dots & & & \\ \teeny{\ell-1} & \dots & \teeny{\ell-1}
\end{ytableau} , \emptyset \right)\]
For any $j$, $1\leq j \leq \ell$ we have an equality:
\[ \varphi_{ST} = \varphi_{\un(S) \un(T)}\circ \varphi_{TT}\]
\end{lem}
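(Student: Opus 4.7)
The plan is to verify the equality by evaluating both sides on the cyclic generator $m_{\un(\lambda)}$ of $M^{\un(\lambda)}$; by \cite[Corollary 5.17]{DJM} this suffices to conclude equality of the morphisms in $\homh(M^{\un(\lambda)}, M^\mu)$.

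First I would compute $\varphi_{TT}(m_{\un(\lambda)})$. The constraint $\mu(t)=T$ forces each row of $t\in\mathcal{A}(\lambda,T)$ to consist of the specific block of consecutive integers occupying that row in the reading order of $t^{\un(\lambda)}$; combined with standardness, this forces $t=t^\lambda$, so $\mathcal{A}(\lambda,T)=\{t^\lambda\}$. Since $d(t^\lambda)=1$, the Murphy-type cellular basis expansion of $m_{TT}$ from \cite[Sec.~6]{DJM} collapses to the single term $m_\lambda$, giving $\varphi_{TT}(m_{\un(\lambda)})=m_\lambda$. Moreover, setting $U:=\prod_{i=\lambda_1+1}^{n}(L_i+1)$ so that $m_\lambda=Um_{\un(\lambda)}$, I would check that $U$ commutes with $m_{\un(\lambda)}$: $U$ commutes with $u_{\un(\lambda)}$ since both are polynomials in the pairwise commuting $L_i$'s, and $U$ is symmetric in $L_{\lambda_1+1},\ldots,L_n$, so the Jucys--Murphy commutation relations force $U$ to commute with every $T_j$ for $j\ne\lambda_1$, hence with $x_{\un(\lambda)}$ (as $\Sym_{\un(\lambda)}$ is generated by such $s_j$). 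Therefore $m_\lambda=m_{\un(\lambda)}U\in M^{\un(\lambda)}$.

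Applying right-$\Hh_n$-linearity of $\varphi_{\un(S)\un(T)}$, the right-hand side evaluated at $m_{\un(\lambda)}$ becomes
\[\varphi_{\un(S)\un(T)}(\varphi_{TT}(m_{\un(\lambda)}))=\varphi_{\un(S)\un(T)}(m_{\un(\lambda)}U)=m_{\un(S)\un(T)}\cdot U.\]
For the left-hand side, the Murphy basis formula \cite[Sec.~6]{DJM} expresses $\varphi_{ST}(m_{\un(\lambda)})=m_{ST}$ as a sum over $\mathcal{A}(\lambda,S)\times \mathcal{A}(\lambda,T)$ of terms $T_{d(s)^{-1}}m_\lambda T_{d(t)}$; the $t$-sum is trivial by the first step, and in each remaining term I would rewrite $T_{d(s)^{-1}}m_\lambda=T_{d(s)^{-1}}(Um_{\un(\lambda)})=T_{d(s)^{-1}}(m_{\un(\lambda)}U)=(T_{d(s)^{-1}}m_{\un(\lambda)})U$ using the commutation established above. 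Crucially, this step does \emph{not} require $U$ to commute with $T_{d(s)^{-1}}$---only with $m_{\un(\lambda)}$. Factoring out the common $U$ on the right yields $m_{ST}=m'\cdot U$, where $m'=\sum_{s\in \mathcal{A}(\lambda,S)}T_{d(s)^{-1}}m_{\un(\lambda)}$. By \cref{seteq}, $\st$ is a bijection $\mathcal{A}(\un(\lambda),\un(S))\to \mathcal{A}(\lambda,S)$ satisfying $d(\st(s'))=d(s')$, so $m'=m_{\un(S)\un(T)}$ and thus $m_{ST}=m_{\un(S)\un(T)}U$, matching the right-hand side.

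The main obstacle I expect is confirming the precise form of the Murphy-type cellular basis formula for $m_{ST}$ in \cite[Sec.~6]{DJM}---in particular, ensuring that any structural coefficients appearing in the sum depend only on combinatorial data preserved by stacking, so that the bijection of \cref{seteq} can be applied term-by-term. Since stacking preserves $d$-values, row-contents, and the defining conditions for $\mathcal{A}$, this should amount to bookkeeping against DJM's conventions, but it is the one nontrivial verification in an otherwise formal argument.
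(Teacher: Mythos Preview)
Your proposal is correct and follows essentially the same route as the paper: evaluate both sides at the cyclic generator $m_{\un(\lambda)}$, observe that $\mathcal{A}(\lambda,T)$ is a singleton so $m_{TT}=m_\lambda$, write $m_\lambda=m_{\un(\lambda)}U$ with $U=\prod_{i=\lambda_1+1}^n(L_i+1)$, use right $\Hh_n$-linearity of $\varphi_{\un(S)\un(T)}$, and match the resulting sums via the bijection of \cref{seteq} together with $d(\st(s'))=d(s')$. Your explicit justification that $U$ commutes with $m_{\un(\lambda)}$ is a detail the paper uses silently, and your closing concern about extra structural coefficients in the Murphy formula is unfounded---the DJM expression is exactly $m_{ST}=\sum_{s,t} T_{d(s)}^{*}\,m_\lambda\,T_{d(t)}$ with no additional factors.
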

\begin{proof}
    First, note that these maps can indeed be composed:
    \[ \varphi_{ST}, \varphi_{\un(S) \un(T)} \in \homh (M^{\un(\lambda)}, M^\mu), \qquad \varphi_{TT} \in \homh (M^{\un(\lambda)}, M^{\un(\lambda)})\]

    If $t \in \std(\un(\lambda))$ is such that $\un(\lambda)(t) = \un(T)$, then $t = t^{\un(\lambda)}$ and $d(t) = 1.$ Therefore,
    \[ m_{TT} = m_\lambda, \qquad m_{ST} = \sum_{s \in \std(\lambda), \mu(s) = S}T_{d(s)}^*m_\lambda, \qquad m_{\un(\lambda)}=\sum_{s' \in \std(\un(\lambda),\mu(s')= \un(S)}T_{d(s')}^*m_{\un(\lambda)}\]
Combining this with \ref{seteq}, we get
\begin{align*}\varphi_{TT}\left(m_{\un(\lambda)}\right)
    &= \left( \prod_{i=\lambda_1+1}^n(L_i+1) \right) m_{\un(\lambda)}
\end{align*}
so that
\begin{align*}
    \varphi_{\un(S)\un(T)}\circ \varphi_{TT}\left( m_{\un(\lambda)}\right) &= \varphi_{\un(S)\un(T)} \left( m_\lambda \right) \\
    &= \varphi_{\un(S)\un(T)}\left( m_{\un(\lambda)} \prod_{i=\lambda_1+1}^n(L_i+1) \right)\\
    &=m_{\un(S)\un(T)} \prod_{i=\lambda_1+1}^{n}(L_i+1)\\
    &= \left( \sum_{s' \in \mathcal{A}(\un(\lambda), \un(S))}T_{d(s')}^*m_{\un(\lambda)}\right) \left(  \prod_{i=\lambda_1+1}^{n}(L_i+1) \right) \\
    &= \sum_{s' \in \mathcal{A}(\un(\lambda), \un(S))}T_{d(s')}^*m_\lambda
\end{align*}
On the other hand, by \ref{seteq} and since $T_{d(s)}=T_{d(\un(s))},$
\begin{align*}
    \varphi_{ST}(m_{\un(\lambda)}) &= \sum_{s \in \mathcal{A}(\lambda, S)}T_{d(s)}^*m_\lambda \\
    &= \sum_{\un(s), s \in \mathcal{A}(\lambda, S)} T_{d(\un(s))}^*m_\lambda \\
    &= \sum_{s' \in \mathcal{A}(\un(\lambda), \un(S))}T_{d(s')}^*m_\lambda
\end{align*}
giving us the result.
\end{proof}
Since $T$ is the only tableau of its type and shape, we have that $\langle \varphi_T,\varphi_S\rangle=0$ for all $S\neq T$.  Thus, $\varphi_T$ lies in the radical of the cell module form if and only $\langle \varphi_T, \varphi_T\rangle = 0$.  Therefore:

\begin{cor}  For $m$ large odd, the vector $\varphi_T$ generates $\Csl{\lambda}$, and the inner product on this cell module will be zero if and only if $\langle \varphi_T, \varphi_T\rangle = 0.$
\end{cor}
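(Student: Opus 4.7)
The plan is to use \cref{lem:T-factor} and its proof technique to establish an analogous identity with the second $T$ replaced by $T^\lambda$, from which both claims in the corollary will follow. The key structural fact making everything work is that for $m$ large odd, $\un(\lambda)\in\LnmB$: its first component $(\lambda_1)$ has the required single part, and its second has at most $\ell-1\leq n-1\leq r$ parts. Hence for any semi-standard tableau $S$ of shape $\lambda$ with type $\mu\in\LnmB$, the element $\varphi_{\un(S)\un(T)}\in\Sd^n(\Lnm)$ has both row and column types in $\LnmB$, so it lies in $\Sl^n(m)=\e\Sd^n(\Lnm)\e$.

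The first step is to prove the identity
\[
\varphi_{ST^\lambda}=\varphi_{\un(S)\un(T)}\circ\varphi_{TT^\lambda}\qquad\text{in }\Sd^n(\Lnm)
\]
by a parallel argument to \cref{lem:T-factor}. I would first observe that $\varphi_{TT^\lambda}(m_\lambda)=m_\lambda$ since the only standard tableau of shape $\lambda$ mapping to $T$ under $\un(\lambda)(-)$ is $t^\lambda$, with $d(t^\lambda)=1$. Writing $m_\lambda=m_{\un(\lambda)}\cdot\prod_{i=\lambda_1+1}^n(L_i+1)$ and then applying $\varphi_{\un(S)\un(T)}$ as a right $\Hh_n$-module map yields $m_{\un(S)\un(T)}\cdot\prod_{i=\lambda_1+1}^n(L_i+1)$, which by \cref{seteq} and the equality $d(\un(s))=d(s)$ rewrites as $m_{ST^\lambda}$. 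Passing to $\Csl{\lambda}$, the identity gives $\varphi_S=\varphi_{\un(S)\un(T)}\cdot\varphi_T$ for every basis vector, so $\varphi_T$ generates the cell module.

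For the second claim, only the implication ``$\langle\varphi_T,\varphi_T\rangle=0$ implies the form vanishes'' requires work. The key point is that $\langle\varphi_T,\varphi_S\rangle=0$ for every basis vector $\varphi_S\neq\varphi_T$: by definition this pairing is the coefficient of $\varphi_\lambda$ in $\varphi_{T^\lambda T}\cdot\varphi_{ST^\lambda}$ modulo $\Lsd{\lambda}$, but the composition is zero in $\Sd^n(\Lnm)$ unless the codomain $M^\mu$ of $\varphi_{ST^\lambda}$ matches the domain $M^{\un(\lambda)}$ of $\varphi_{T^\lambda T}$, forcing the type of $S$ to be $\un(\lambda)$. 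A short forced-layout argument---since $0$ can only appear in the first component and columns must be strictly increasing, all zeros are forced into row~$1$, then all $1$'s into row~$2$, and so on---shows that $T$ is the unique semi-standard tableau of shape $\lambda$ with type $\un(\lambda)$. Hence $\langle\varphi_T,\varphi_T\rangle=0$ implies $\langle\varphi_T,\varphi_S\rangle=0$ for every $S$, and the associativity $\langle a\varphi_T,b\varphi_T\rangle=\langle\varphi_T,a^*b\,\varphi_T\rangle$ together with cyclicity from the first claim propagates the vanishing to every pair of basis vectors.

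The main technical hurdle will be justifying $m_\lambda=m_{\un(\lambda)}\cdot\prod_{i=\lambda_1+1}^n(L_i+1)$, which in turn rests on the pairwise commutativity of the Jucys--Murphy elements and on the product $\prod_{i=\lambda_1+1}^n(L_i+1)$ being symmetric under each factor of $\Sym_{\un(\lambda)}$ so that it commutes with $x_{\un(\lambda)}$. Once this commutation is nailed down, the rest of the plan reduces to combinatorial bookkeeping via \cref{seteq} and exploiting the composability constraint on morphisms between the various $M^\mu$.
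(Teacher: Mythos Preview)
Your proposal is correct and follows essentially the same line as the paper: generation via the factorization through $\varphi_{\un(S)\un(T)}$, uniqueness of $T$ among tableaux of its shape and type to get $\langle\varphi_T,\varphi_S\rangle=0$ for $S\neq T$, and then the submodule property of the radical to conclude.

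The one genuine difference is that you establish a variant identity $\varphi_{ST^\lambda}=\varphi_{\un(S)\un(T)}\circ\varphi_{TT^\lambda}$, whereas the paper simply invokes \cref{lem:T-factor} as stated (with $T$ in the second slot) together with the cellular axiom that the left action on a cell module is independent of the second index. Your route is slightly more self-contained but duplicates the calculation in the lemma; the paper's route is shorter but leans on the general cellular machinery. Either way the content is the same, and your explicit check that $\un(\lambda)\in\LnmB$ (so that $\varphi_{\un(S)\un(T)}$ really lies in $\Sl^n(m)$) is a point the paper leaves implicit.
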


Following the proof of \cite[Prop. 3.7]{jf}, we have
\[ m_\lambda L_i \equiv \res_{\lambda}(i)m_\lambda \mod \Lh{\lambda}.\]

So,
\begin{align*}
\varphi_{TT}^2(m_{\un(\lambda)}) &= \prod_{i=\lambda_1+1}^n(L_i+1)\varphi_{TT}(m_{\un(\lambda)}) \\
    & = \prod_{i=\lambda_1+1}^n(L_i+1) m_\lambda 
    \\
    &\equiv \prod_{i=\lambda_1+1}^n(\res_{\lambda}(i)+1)m_\lambda \mod \Lh{\lambda}\\
\end{align*}
which implies that
\[ \langle \varphi_T, \varphi_T \rangle = \prod_{i=\lambda_1+1}^n (\res_{\lambda}(i)+1).\]
Each row from $k=2$ onwards will contribute a factor of
\[ \prod_{b=1}^{\lambda_k}(Qq^{b-k}+1).\]
Taking the product over all of these rows gives:
\begin{equation}
    \langle \varphi_T, \varphi_T \rangle = \prod_{a=2}^{\ell} \left( \prod_{b=1}^{\lambda_a}(Qq^{b-a}+1) \right).
\end{equation}

\begin{cor}\label{formula}   The bipartition
    $\lambda = ((\lambda_1, \dotsc , \lambda_\ell), \emptyset)$ is \LNX{o} if and only if \[\prod_{a=2}^{\ell} \left( \prod_{b=1}^{\lambda_a}(Q+q^{a-b}) \right) \neq 0.\]  In particular:
    \begin{enumerate}
        \item    The bipartition $\lambda = ((1^n), \emptyset)$ is \LNX{o} if and only if $\prod_{i=2}^n(Q+q^{i-1}) \neq 0$.
        \item The bipartition $\lambda = \left( \left( \lceil \frac{n}{2}\rceil, \lfloor \frac{n}{2} \rfloor \right), \emptyset \right)$ is \LNX{o} if and only if $\prod_{2-\lfloor \frac{n}{2} \rfloor}^1(Q+q^i) \neq 0.$
    \end{enumerate}
\end{cor}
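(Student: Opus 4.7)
The plan is to deduce this directly from the material already assembled in the two paragraphs preceding the corollary statement. The corollary immediately above tells us that for $m$ large odd the vector $\varphi_T$ generates $\Csl{\lambda}$ and that the cell-module bilinear form vanishes on $\Csl{\lambda}$ if and only if $\langle \varphi_T, \varphi_T \rangle = 0$. Combining this with \cref{cor:corner-algebra} applied to the single cell indexed by $\lambda$ shows that $\lambda$ is \LNX{o} if and only if $\langle \varphi_T,\varphi_T\rangle \neq 0$. Thus the entire content of the corollary is the translation of the explicit formula $\langle \varphi_T, \varphi_T \rangle = \prod_{a=2}^{\ell}\prod_{b=1}^{\lambda_a}(Qq^{b-a}+1)$ — derived from the Jucys--Murphy congruence $m_\lambda L_i \equiv \res_\lambda(i) m_\lambda \pmod{\Lh{\lambda}}$ — into the form claimed in the statement.

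Next I would rewrite the factors. Each factor $Qq^{b-a}+1$ differs from $Q+q^{a-b}$ by multiplication by $q^{b-a} \in \K^\times$: explicitly, $q^{a-b}(Qq^{b-a}+1) = Q+q^{a-b}$. Since multiplying a product by a unit does not affect whether the product vanishes, the equivalence $\prod_{a,b}(Qq^{b-a}+1) \neq 0 \iff \prod_{a,b}(Q+q^{a-b}) \neq 0$ is immediate, proving the main assertion.

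For the two numbered special cases, the plan reduces to a mechanical substitution. In case (1), $\lambda = ((1^n),\emptyset)$, so $\ell = n$ and $\lambda_a = 1$ for every $a$; the double product collapses to $\prod_{a=2}^n(Q+q^{a-1}) = \prod_{i=2}^n(Q+q^{i-1})$. In case (2), $\lambda = ((\lceil n/2\rceil, \lfloor n/2\rfloor), \emptyset)$, so $\ell = 2$ and only $a=2$ contributes a nontrivial inner product, giving $\prod_{b=1}^{\lfloor n/2\rfloor}(Q+q^{2-b})$; reindexing via $i = 2-b$ yields $\prod_{i=2-\lfloor n/2\rfloor}^{1}(Q+q^i)$.

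I do not anticipate a genuine obstacle: all the substantive work — the unstacking factorization \cref{lem:T-factor}, the observation that $T$ is the unique semi-standard tableau of its type and hence $\varphi_T$ is orthogonal to every other basis vector in the cell module, and the residue calculation for $m_\lambda L_i$ — has already been carried out. The corollary is a packaging step, and the only judgement needed is the trivial one that the monomial $q^{a-b}$ is invertible in $\K$.
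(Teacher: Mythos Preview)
Your proposal is correct and follows exactly the paper's approach: the paper does not even give a separate proof for this corollary, since the formula $\langle \varphi_T,\varphi_T\rangle = \prod_{a=2}^{\ell}\prod_{b=1}^{\lambda_a}(Qq^{b-a}+1)$ is computed in the lines immediately preceding it, and the corollary is just the observation that this product vanishes precisely when some factor $Q+q^{a-b}$ does. One small remark: the link ``$\lambda$ is \LNX{o} $\iff$ the cell-module form is nonzero'' is really the Graham--Lehrer characterization of which cells give nonzero simples (implicit in the definition of \LNX{m}), not \cref{cor:corner-algebra}, which is a global statement about the whole algebra; but this is only a matter of citing the right line and does not affect the argument.
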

This result can be interpreted using the residue function 
\begin{equation}\label{content-def}
    \operatorname{res}(r,c,\ell)=\begin{cases}
    Qq^{c-r} & \ell=1\\
    -q^{c-r} & \ell=2.
\end{cases}
\end{equation} 
where $(r,c,\ell)$ denotes the box in row $r$ and column $c$ of the $\ell$-th component.

We can rephrase the result above as saying that $\lambda$ is \LNX{o} if and only if no box $(a,b,1)$ in the diagram of $\la^{(1)}$ with $a>1$ has residue $-1$.

\begin{lem}\label{klimits}
    If any bipartition of the form $\lambda = ((\lambda_1 , \dotsc \lambda_\ell ) , \emptyset)$ is not \LNX{o}, then $Q=-q^k$ for some 
    $k\in [2 - \lfloor \frac{n}{2} \rfloor,n-1]$.  
\end{lem}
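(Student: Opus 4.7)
The plan is to argue directly from \cref{formula}. Suppose $\lambda = ((\lambda_1, \dotsc, \lambda_\ell), \emptyset)$ is not \LNX{o}. By that corollary, we have
\[ \prod_{a=2}^{\ell} \prod_{b=1}^{\lambda_a}(Q + q^{a-b}) = 0, \]
so working in a field, at least one factor vanishes. That is, there exist integers $a$ and $b$ with $2 \leq a \leq \ell$ and $1 \leq b \leq \lambda_a$ such that $Q = -q^{a-b}$. Setting $k := a - b$, the whole task reduces to showing that $2 - \lfloor n/2 \rfloor \leq k \leq n-1$ under these constraints.

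For the upper bound, I will just observe that $k = a - b \leq a - 1 \leq \ell - 1$, and since all $\ell$ parts of $\lambda$ are positive integers summing to $n$, we have $\ell \leq n$, so $k \leq n-1$.

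For the lower bound, the key input is the partition condition $\lambda_1 \geq \lambda_2 \geq \dotsb \geq \lambda_\ell \geq 1$. Combined with $\lambda_1 + \lambda_2 \leq \sum_i \lambda_i = n$, this forces $2\lambda_2 \leq n$, hence $\lambda_2 \leq \lfloor n/2 \rfloor$. I will then split by the value of $a$. If $a = 2$, then $k = 2 - b \geq 2 - \lambda_2 \geq 2 - \lfloor n/2 \rfloor$. If $a \geq 3$, then $\lambda_a \leq \lambda_2 \leq \lfloor n/2 \rfloor$, so $k = a - b \geq a - \lambda_a \geq 3 - \lfloor n/2 \rfloor > 2 - \lfloor n/2 \rfloor$. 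Either way, $k$ lies in the claimed interval.

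There is no substantive obstacle here; the argument is really just a bookkeeping exercise once \cref{formula} is in hand. The only mild point worth emphasizing is that the extremal lower value $k = 2 - \lfloor n/2 \rfloor$ is only attainable when $a = 2$ and $b = \lambda_2$ is as large as possible, which aligns with the bipartition $\bigl(\bigl(\lceil n/2 \rceil, \lfloor n/2 \rfloor\bigr), \emptyset\bigr)$ already singled out in part (2) of \cref{formula}, showing the interval in the statement is in fact sharp.
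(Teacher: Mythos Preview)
Your proof is correct and follows essentially the same approach as the paper: both start from \cref{formula} to obtain $Q=-q^{a-b}$ with $a\geq 2$ and $1\leq b\leq \lambda_a$, then use $a\leq n$ for the upper bound and $\lambda_a\leq \lambda_2\leq \lfloor n/2\rfloor$ (from $\lambda_1\geq \lambda_2$ and $\lambda_1+\lambda_2\leq n$) for the lower bound. The only cosmetic difference is that the paper writes the lower bound as a single chain $b-a\leq b-2\leq \lambda_2-2\leq \frac{\lambda_1+\lambda_2}{2}-2\leq \frac{n}{2}-2$ rather than splitting into the cases $a=2$ and $a\geq 3$.
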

\begin{proof}
    By \cref{formula}, we must have $Q=-q^{a-b}$, and there is a box $(a,b)$ with $a>1$ in the diagram of $\lambda$.  We find the limits above by trying to maximize or minimize $a-b$. 
    
    Of course, $a-b<a\leq n$, which gives the desired upper bound.  
On the other hand, we must have \[b-a\leq b-2\leq \la_2-2\leq \frac{\la_1+\la_2}{2}-2\leq \frac{n}{2}-2.\]  This implies that $b-a\leq \lfloor \frac{n}{2} \rfloor-2$, completing the proof.
\end{proof}


Note that if $q$ is itself a root of unity of order $e$, then $Q=-q^k$ can be reduced to $Q=-q^{\overline{k}}$ with $0 \leq \overline{k} <e$, so that these bounds are extremal regardless of the quantum characteristic.

\subsection{Case of $m$ even}
We will discuss it in less detail, but a similar calculation applies when $m$ is even.  In this case, the natural generator for the cell module $\Csl{ ((\lambda_1 , \dotsc \lambda_\ell ) , \emptyset)}$ is $U$, the unique semi-standard tableau of type $ ( \emptyset,(\lambda_1 , \dotsc \lambda_\ell ))$; we can construct this by shifting up all the entries of $T$ by 1.  \cref{lem:T-factor} generalizes readily to show that $\varphi_U$ generates $\Csl{ ((\lambda_1 , \dotsc \lambda_\ell ) , \emptyset)}$ in the case of $m$ large even, so \cref{formula} is modified to:
\begin{cor}\label{formula-even}  The bipartition
    $\lambda = ((\lambda_1, \dotsc , \lambda_\ell), \emptyset)$ is  \LNX{e} if and only if \[\prod_{a=1}^{\ell} \left( \prod_{b=1}^{\lambda_a}(Q+q^{a-b}) \right) \neq 0\] i.e. if and only if no box $(a,b,1)$ in the diagram of $\la^{(1)}$ has residue $-1$.  In particular:
    \begin{enumerate}
        \item    The bipartition $\lambda = ((1^n), \emptyset)$ is  \LNX{e} if and only if $\prod_{i=1}^n(Q+q^{i-1}) \neq 0$.
        \item The bipartition $\lambda = (  (n), \emptyset )$ is  \LNX{e} if and only if $\prod_{i=1}^{n}(Q+q^{-i+1}) \neq 0.$
    \end{enumerate}
\end{cor}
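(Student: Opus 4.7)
The plan is to mirror the proof of \cref{formula}, replacing $T$ by the tableau $U$ appropriate to the even case. First, I would observe that for $m$ large even, the types in $\LnmB$ all have empty first component, so the analog of $T$ must be built from the second alphabet only: define $U$ to be the unique semi-standard tableau of shape $\lambda=((\lambda_1,\dotsc,\lambda_\ell),\emptyset)$ and type $(\emptyset,(\lambda_1,\dotsc,\lambda_\ell))$, namely the one with $a$'s (from the second alphabet) in row $a$ of the first component.

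Next I would set up the unstacking as $\un(\lambda):=(\emptyset,(\lambda_1,\dotsc,\lambda_\ell))$, moving every row into the second partition. Then $\Sym_{\un(\lambda)}=\Sym_{\lambda}$ gives $x_{\un(\lambda)}=x_\lambda$, while $u_{\un(\lambda)}=1$ since $|\un(\lambda)^{(1)}|=0$. Consequently $m_{\un(\lambda)}=x_\lambda$ and
\[m_\lambda=\left(\prod_{i=1}^n(L_i+1)\right)m_{\un(\lambda)}.\]
With this setup, \cref{seteq} and \cref{lem:T-factor} go through essentially verbatim: the bijection between standard tableaux of shape $\lambda$ and $\un(\lambda)$ is unchanged, and since any $S$ of type in $\LnmB$ has no entries from the first alphabet, its unstacking $\un(S)$ is automatically semi-standard of shape $\un(\lambda)$. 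The resulting factorization $\varphi_{SU}=\varphi_{\un(S)\un(U)}\circ\varphi_{UU}$ shows that $\varphi_U$ generates $\Csl{\lambda}$ as an $\Sl^n(m)$-module. Combined with the observation that $\langle\varphi_U,\varphi_S\rangle=0$ for $S\neq U$ (since $U$ is the only tableau of its type and shape), the bilinear form on $\Csl{\lambda}$ is zero if and only if $\langle\varphi_U,\varphi_U\rangle=0$.

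Then I would compute
\[\varphi_{UU}^2(m_{\un(\lambda)})=\prod_{i=1}^n(L_i+1)\cdot m_\lambda\equiv\prod_{i=1}^n(\res_\lambda(i)+1)\,m_\lambda\pmod{\Lh{\lambda}}\]
using $m_\lambda L_i\equiv\res_\lambda(i)m_\lambda$ as in the proof of \cite[Prop. 3.7]{jf}. Unlike the odd case, the product now ranges over every box of $\lambda^{(1)}$ because $u_{\un(\lambda)}=1$, yielding
\[\langle\varphi_U,\varphi_U\rangle=\prod_{a=1}^\ell\prod_{b=1}^{\lambda_a}(Qq^{b-a}+1).\]
Extracting the unit factor $q^{b-a}$ rewrites this as $\prod(Q+q^{a-b})$ (up to a unit), which vanishes precisely when some box $(a,b,1)$ of $\lambda^{(1)}$ has residue $-1$ in the sense of \eqref{content-def}. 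The two special cases follow immediately by substituting $\lambda=((1^n),\emptyset)$ and $\lambda=((n),\emptyset)$.

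The only real obstacle is confirming that the unstacking arguments of \cref{seteq,lem:T-factor} continue to work when $\un(\lambda)^{(1)}=\emptyset$. This case is actually simpler: the factor $u_{\un(\lambda)}$ simply degenerates to $1$, and the type restriction that prevents unstacking from violating semi-standardness is automatic because every type of interest already has empty first component. Hence the translation is purely bookkeeping.
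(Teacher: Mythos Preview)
Your proposal is correct and takes essentially the same approach as the paper. The paper merely sketches the even case by saying a ``similar calculation applies,'' defining $U$ as the shift of $T$ by one and asserting that \cref{lem:T-factor} generalizes readily; you have supplied exactly the details the paper omits, including the observation that $u_{\un(\lambda)}=1$ forces the product to begin at $a=1$ rather than $a=2$.
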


\section{Induction and Restriction Functors}
\subsection{Definition and basic properties}  Fix $n \in \mathbb{Z}_{>0}$ and $\lambda \in \Lambda_n$. 
In this section, we fix $m=m'$, so either $m=2n+1$ or $m=2n$; note, that we should interpret this to mean that $\Sl^{n\pm 1}=\Sl^{n\pm 1}(2(n\pm 1)+1)$ or $\Sl^{n\pm 1}=\Sl^{n\pm 1}(2(n\pm 1))$.  

Let $\hres{n+1}{n}: \mathcal{H}_{n+1}\operatorname{-mod} \rightarrow \mathcal{H}_n\operatorname{-mod}$ and $\hind{n}{n+1}: \Hh_{n} \rightarrow \Hh_{n+1}$ denote the induction and restriction functors on the categories of right modules over the Hecke algebra.

We also recall the induction and restriction functors of \cite{wada} for right $\Sd^n$ modules:
\[ \Ind{n}{n+1}: \Sd^n\operatorname{-mod} \rightarrow \Sd^{n+1}\operatorname{-mod}\]
\[ \Res{n+1}{n}: \Sd^n\operatorname{-mod} \rightarrow \Sd^{n-1}\operatorname{-mod}\]
given by 
\[ \Ind{n}{n+1} (N) =\Sd^{n+1} \xi \otimes_{\Sd^n}N \qquad \qquad  \Res{n+1}{n}(M) = \Hom_{\Sd^{n+1}}(\Sd^{n+1}, M).\]
where $\xi: \Sd^n \rightarrow \Sd^{n+1}$ is the injection induced by adding the lowest addable node to a bipartition of $n$.

Let $\Omega_n: \Sd^n\operatorname{-mod} \rightarrow \Hh_n\operatorname{-mod}$ be the Schur functor $ \Omega_n (M) = M\eh{n}$.

\begin{cor}[\mbox{\cite[Cor. 3.18]{wada}}]\label{cor:wada}
There is an isomorphism of functors
\[ \Omega_n \circ \Res{n+1}{n} \cong {}^{\mathcal{H}\!} \Res{n+1}{n} \circ \Omega_{n+1}\]
and
\[ \Omega_{n+1} \circ \Ind{n}{n+1} \cong {}^{\mathcal{H}\!}\Ind{n}{n+1} \circ \Omega_n\]
\end{cor}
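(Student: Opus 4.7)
The plan is to realize every functor in sight as tensor product (or Hom) with a specific bimodule, and then produce a direct isomorphism of these bimodules. This is the essential content of Wada's general theorem \cite[Cor. 3.18]{wada}, so the task reduces to verifying that our Dipper--James--Mathas algebra $\Sd^n(\Lnm)$ satisfies the hypotheses of that result.

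First I would rewrite $\Omega_n$ as $\Omega_n(M)=M\otimes_{\Sd^n}\Sd^n\eh{n}$, using the idempotent description of the Schur functor together with $\eh{n}\Sd^n\eh{n}\cong\Hh_n$, so that $\Omega_n$ becomes tensoring with the $(\Sd^n,\Hh_n)$-bimodule $\Sd^n\eh{n}$. The induction $\Ind{n}{n+1}$ is tensoring with the $(\Sd^n,\Sd^{n+1})$-bimodule $\Sd^{n+1}\xi$, and its right adjoint $\Res{n+1}{n}$ is restriction of scalars along $\xi$. On the Hecke side, $\hind{n}{n+1}$ and $\hres{n+1}{n}$ are the usual tensor/restriction functors for the natural embedding $\Hh_n\hookrightarrow\Hh_{n+1}$.

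The heart of the proof is the identification
\[\xi(\eh{n})\;=\;\eh{n+1}\qquad\text{(under a canonical identification in }\Sd^{n+1}\text{)},\]
which says that the lowest addable node of the bipartition $(\emptyset,(1^n))$ labelling $\eh{n}$ is precisely the extra box producing $(\emptyset,(1^{n+1}))$, which labels $\eh{n+1}$. From this one extracts an isomorphism of $(\Sd^{n+1},\Hh_n)$-bimodules
\[\Sd^{n+1}\xi\otimes_{\Sd^n}\Sd^n\eh{n}\;\cong\;\Sd^{n+1}\eh{n+1},\]
where $\Hh_n$ acts on the right-hand side via $\Hh_n\hookrightarrow\Hh_{n+1}\cong\eh{n+1}\Sd^{n+1}\eh{n+1}$. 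The restriction isomorphism then follows by tensoring $M$ on the left; the induction isomorphism follows either by adjunction or by a parallel direct calculation using the flat/projective nature of the bimodules involved.

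The main technical obstacle is verifying this bimodule identity, and in particular that the two $\Hh_n$-actions agree. This amounts to checking that the ring homomorphism $\xi$ sends the Hecke subalgebra $\Hh_n\subset\eh{n}\Sd^n\eh{n}$ onto the standard copy of $\Hh_n$ inside $\eh{n+1}\Sd^{n+1}\eh{n+1}\cong\Hh_{n+1}$, which in turn reduces to a direct check on the cellular basis $\varphi_{ST}$ combined with compatibility of the Jucys--Murphy elements under $\Sym_n\subset\Sym_{n+1}$. Once this compatibility is established, the rest of the argument is a formal manipulation of bimodule tensor products and Hom--tensor adjunction.
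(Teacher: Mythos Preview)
The paper does not provide its own proof of this statement; it is recorded as a direct citation of Wada's result \cite[Cor.~3.18]{wada}, with no accompanying argument. Your sketch is therefore not being compared against anything in the paper itself.

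That said, your approach is correct and is essentially how such compatibility statements are proven in Wada's framework. The key point you isolate---that $\xi$ carries the idempotent $\eh{n}=\varphi_{(\emptyset,(1^n))}$ to $\eh{n+1}=\varphi_{(\emptyset,(1^{n+1}))}$ because the lowest addable node of $(\emptyset,(1^n))$ produces $(\emptyset,(1^{n+1}))$---is exactly the combinatorial input needed, and the reduction to checking that $\xi$ restricts to the standard inclusion $\Hh_n\hookrightarrow\Hh_{n+1}$ on the corner algebras is the right reformulation. One small point: be careful with left/right module conventions, since the paper works throughout with right modules (so $\Omega_n(M)=M\eh{n}$ and induction should be written as $N\mapsto N\otimes_{\Sd^n}{}_{\xi}\Sd^{n+1}$ rather than with the bimodule on the left), but this is cosmetic and does not affect the substance of your argument.
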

Let $\oml{n}: \Sd^n\operatorname{-mod} \rightarrow \Sl^n\operatorname{-mod}$ be the truncation functor sending  $\oml{n} (M) = M\e$.
Given any associative ring $R,$ let $\F{k} : R\operatorname{-mod} \rightarrow R\operatorname{-mod}$ be the functor given by
\[ \F{k}(M) = M^{\oplus k} \]
Recall that by \eqref{eq:compare-tensors}, we have an isomorphism:
\[ e_n \Sd^n\eh{n}\cong V^{\otimes n}\]
where $V$ is a $m$-dimensional vector space. 
\begin{lem}
    For any $M \in \Sd^n\operatorname{-mod},$ 
    \[\Ind{n}{n+1}(M)e_{n+1} \cong (Me_n)^{\oplus \dim V}\]
\end{lem}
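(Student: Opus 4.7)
The key structural input is the tensor factorization $V^{\otimes(n+1)} \cong V^{\otimes n} \otimes V$ as a right $\Hh_n$-module, where $\Hh_n \hookrightarrow \Hh_{n+1}$ is the canonical subalgebra embedding generated by $T_0, \dots, T_{n-1}$, acting on the first $n$ tensor factors. This immediately yields $V^{\otimes(n+1)} \cong (V^{\otimes n})^{\oplus \dim V}$ as right $\Hh_n$-modules, placing the factor $\dim V$ at the heart of the desired isomorphism.

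Writing $\Ind{n}{n+1}(M) = M \otimes_{\Sd^n} B$ for the $\xi$-induced bimodule $B$, the identity
\[\Ind{n}{n+1}(M) e_{n+1} \cong M \otimes_{\Sd^n} (B e_{n+1})\]
combined with $(Me_n)^{\oplus \dim V} \cong M \otimes_{\Sd^n} (\Sd^n e_n)^{\oplus \dim V}$ reduces the claim, by right-exactness of $M \otimes_{\Sd^n} -$, to the bimodule isomorphism $B e_{n+1} \cong (\Sd^n e_n)^{\oplus \dim V}$ as $\Sd^n$-modules.

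I would establish this bimodule isomorphism using the Hom descriptions $\Sd^{n+1} e_{n+1} \cong \operatorname{Hom}_{\Hh_{n+1}}(V^{\otimes(n+1)}, M^{\Lambda_{n+1}})$ and $\Sd^n e_n \cong \operatorname{Hom}_{\Hh_n}(V^{\otimes n}, M^{\Lnm})$, together with the tensor factorization above. The embedding $\xi$, defined combinatorially by adjoining the lowest addable node, corresponds in the tensor picture to adjoining the extra $V$-factor, so the direct-sum splitting of $V^{\otimes(n+1)}$ as a right $\Hh_n$-module transfers to the corresponding decomposition of the Hom spaces. An equivalent bootstrap route is to first apply \cref{cor:wada} to obtain $\Ind{n}{n+1}(M) \eh{n+1} \cong M\eh{n} \otimes_{\Hh_n} \Hh_{n+1}$ on the Hecke-algebra level, and then extend from $\eh{n+1}$ to $e_{n+1}$ using the containment $e_{n+1}\eh{n+1} = \eh{n+1}$ together with the right $\Hh_n$-module decomposition of $V^{\otimes(n+1)}$.

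The main obstacle I foresee is verifying the precise compatibility between the combinatorial rule defining $\xi$ (adding the lowest addable node on bipartitions) and the geometric tensor factorization of $V^{\otimes(n+1)}$. One must confirm that $\xi$ acts only on the $V^{\otimes n}$ factor in this decomposition, so that the direct-sum splitting of $V^{\otimes(n+1)}$ as a right $\Hh_n$-module carries through to the bimodule $B e_{n+1}$; once established, tensoring with $M$ yields the result.
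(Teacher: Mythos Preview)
Your identification of the key structural input---the restriction isomorphism $V^{\otimes(n+1)}\cong (V^{\otimes n})^{\oplus \dim V}$ of right $\Hh_n$-modules---is exactly right, and this is what ultimately drives the proof. But your direct bimodule route has a genuine gap, and the paper proceeds differently in a way that dodges precisely the obstacle you flagged.

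The gap: under the Hom description $\Sd^{n+1}e_{n+1}\cong \Hom_{\Hh_{n+1}}(V^{\otimes(n+1)}, M^{\Lambda_{n+1}})$, the left $\Sd^n$-action via $\xi$ is by post-composition on the \emph{target} $M^{\Lambda_{n+1}}$. Your tensor splitting is a decomposition of the \emph{source} as an $\Hh_n$-module, but the Hom is over $\Hh_{n+1}$, so this splitting does not transfer to the left $\Sd^n$-module structure without further work. Making it go through would require exactly the compatibility of $\xi$ with the tensor factorization that you identified as unresolved; you also need to pass between $\Hh_{n+1}$- and $\Hh_n$-linear Hom's, which amounts to an induction/restriction adjunction on the Hecke side.

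The paper avoids analyzing $\xi$ directly by first proving the isomorphism only for projective $P$: write $\Ind{n}{n+1}(P)e_{n+1}\cong \Hom_{\Sd^{n+1}}(e_{n+1}\Sd^{n+1},\Ind{n}{n+1}P)$, apply Wada's Ind--Res adjunction to obtain $\Hom_{\Sd^n}(\Res{n+1}{n}(e_{n+1}\Sd^{n+1}),P)$, and then use full faithfulness of the Schur functor $\Omega_n$ on projectives together with \cref{cor:wada} to reduce to the Hecke level. There your tensor decomposition gives $\hres{n+1}{n}(V^{\otimes(n+1)})\cong (e_n\Sd^n\eh n)^{\oplus m}$, and one climbs back to $(Pe_n)^{\oplus m}$. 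For general $M$, take a projective presentation $P_2\to P_1\to M\to 0$, apply the two exact functors $\oml{n+1}\circ\Ind{n}{n+1}$ and $\Sigma_m\circ\oml n$, and conclude by the Five Lemma. Note that applying the projective case to $P=\Sd^n$ \emph{a posteriori} yields your bimodule isomorphism $Be_{n+1}\cong(\Sd^n e_n)^{\oplus m}$, so your reduction step was sound; what was missing was a way to verify it that does not require unpacking $\xi$.
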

\begin{proof}
   First, assume that $P$ is a projective right $\Sd^n$ module.
 Then, we have
\begin{align*}
\Ind{n}{n+1}(P)e_{n+1} &\cong \Hom_{\Sd^{n+1}}(e_{n+1}\Sd^{n+1}, \Ind{n}{n+1}(P)).
\end{align*}
By the adjunction of \cite[Corollary 3.18(ii)]{wada}
\begin{align*}
    \Ind{n}{n+1}(P)e_{n+1} &\cong \Hom_{\Sd^n}(\Res{n+1}{n}(e_{n+1}S^{n+1}), P) \\
    &\cong \Hom_{\Hh^n}(\Omega_n(\hres{n+1}{n}(e_{n+1}S^{n+1})), \Omega_n(P))).
\end{align*}
since the Schur functor is fully faithful on projectives \cite[4.8]{wada}. Now, restricting the action of $\Hh_{n+1}$ to $\Hh_n$ gives us the isomorphism of $\Hh_n$ modules:
\begin{align*}
 e_{n+1}S^{n+1}\eh{n} &\cong V^{\otimes n+1}\\
 &\cong (V^{\otimes n}\otimes V)\\
 &\cong (e_n \Sd^n \eh{n})^{\oplus m}
\end{align*}
where $V$ is the vector space from the construction of $\Sl^n$ in \cite{LNX}, so that 
\begin{align*}
    \Hom_{\Hh_n}(\Omega_n(\hres{n+1}{n}(e_{n+1}S^{n+1}), \Omega_n(P))) &\cong \Hom_{\Hh_n}(\Omega_n((e_n\Sd^n)^{\oplus m}), \Omega_n(P)).
\end{align*}
Since $\Omega_n$ is fully faithful on projectives, this is isomorphic to
\begin{align*}
    \Hom_{\Sd^n}((e_n\Sd^n)^{\oplus m}, P)
    &\cong (P\e)^{\oplus m}
\end{align*}
giving us
\[ \Ind{n}{n+1}(P)e_{n+1} \cong (Pe_n)^{\oplus m}.\]

Now, let $M$ be an arbitrary right $\Sd^n$ module. We can write $M$ as a cokernel of a map between projectives $P_1, P_2 \in \Sd^n\operatorname{-proj}$. So, we have an exact sequence:
\[ P_2 \overset{f}{\rightarrow} P_1 \rightarrow M \rightarrow 0 \]
By \cite[Corollary 3.18(i)]{wada}, induction is exact, as is multiplying with any idempotent, so we have the exact sequence

\begin{equation}
    \Ind{n}{n+1}(P_2)e_{n+1} \rightarrow \Ind{n}{n+1}(P_1)e_{n+1}\rightarrow \Ind{n}{n+1}(M)e_{n+1} \rightarrow 0
\end{equation}
Applying the exact functors $\oml{n+1} \circ \Ind{n}{n+1}$ and $\F{k}\circ \Omega_n$ to this sequence, joining the resulting sequences by the isomorphisms on projectives, and then applying the Five Lemma gives us the result.
\end{proof}
\begin{cor}
    For $M \in \Sd^n\operatorname{-mod},$
    \[Me_n = 0 \iff \Ind{n}{n+1}(M)e_{n+1}=0\]
\end{cor}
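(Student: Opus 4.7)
The plan is to deduce this immediately from the preceding lemma, which established an isomorphism
\[ \Ind{n}{n+1}(M)e_{n+1} \cong (Me_n)^{\oplus \dim V} \]
for any $M \in \Sd^n\mmod$. Since $\dim V = m \geq 1$ in our setup, a direct sum of $m$ copies of $Me_n$ vanishes if and only if $Me_n$ itself vanishes.

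Concretely, the forward direction is the easier one: if $Me_n = 0$, then plugging into the lemma yields $\Ind{n}{n+1}(M)e_{n+1} \cong 0^{\oplus m} = 0$. For the reverse direction, assume $\Ind{n}{n+1}(M)e_{n+1} = 0$. The lemma identifies this with $(Me_n)^{\oplus m}$, and since $m \geq 1$, any element of $Me_n$ gives rise to a nonzero element in the first summand of $(Me_n)^{\oplus m}$; hence $Me_n$ must be zero as well.

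There is essentially no obstacle here, since all the technical content (the construction of the isomorphism via fully faithfulness of the Schur functor on projectives, exactness of induction, and the Five Lemma argument) has already been handled in the proof of the preceding lemma. The only thing to verify is the standing assumption that $\dim V = m \geq 1$, which holds throughout this section since we have fixed $m = m' \in \{2n, 2n+1\}$.
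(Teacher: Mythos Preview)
Your proof is correct and matches the paper's intended approach: the corollary is stated without proof precisely because it follows immediately from the isomorphism $\Ind{n}{n+1}(M)e_{n+1} \cong (Me_n)^{\oplus \dim V}$ established in the preceding lemma, together with $\dim V = m \geq 1$.
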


\begin{lem}{\label{Pres}}
    For any $M \in \Sd^n\operatorname{-mod},$ we have an injective map of vector spaces 
    \[ \Res{n}{n-1}(M)e_{n-1} \hookrightarrow M \e \]
\end{lem}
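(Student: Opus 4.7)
The plan is to identify $\Res{n}{n-1}(M) e_{n-1}$ as a subspace of $M$, and then exhibit it as a subspace of $M\e$. Since the restriction functor $\Res{n}{n-1}$ is restriction of scalars along the algebra map $\xi\colon \Sd^{n-1} \to \Sd^n$, there is a canonical identification $\Res{n}{n-1}(M) = M$ as vector spaces, with $\Sd^{n-1}$ acting via $\xi$.  This is compatible with the Schur-functor isomorphism of \cref{cor:wada}, which requires $\xi(\eh{n-1}) = \eh{n}$.  Under this identification, $\Res{n}{n-1}(M) e_{n-1}$ becomes the subspace $M \cdot \xi(e_{n-1})$ of $M$.

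The desired inclusion $M \cdot \xi(e_{n-1}) \hookrightarrow M\e$ then reduces to the idempotent identity $\xi(e_{n-1}) = \e \cdot \xi(e_{n-1})$ in $\Sd^n$: for any $x \in M$ we would then have $x \cdot \xi(e_{n-1}) = x \cdot \e \cdot \xi(e_{n-1}) \in M\e$, and the resulting map of vector spaces is visibly injective (in fact an equality onto its image inside $M\e$).

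To establish this identity, I decompose $e_{n-1} = \sum_{\mu} \varphi_\mu$ with $\mu$ ranging over $\Lambda_{n-1,m-2}^B$, and exploit the fact that $\xi$ is defined by adding the lowest addable node: on primitive idempotents, $\xi(\varphi_\mu) = \varphi_{\xi(\mu)}$, where $\xi(\mu)$ is the bipartition obtained by adjoining a new row of length $1$ at the bottom of the second component of $\mu$.  Each such $\mu$ has a one-row first component (empty, in the $m$-even case) and a second component with at most $r - 1 = \lfloor (m-2)/2\rfloor$ parts, so $\xi(\mu)$ has the same first component and at most $r = \lfloor m/2\rfloor$ parts in the second---exactly the defining conditions for membership in $\Lambda_{n,m}^B$ recalled in \cref{sec:type-B}. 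Hence each $\varphi_{\xi(\mu)}$ is a summand of $\e$, giving $\e \cdot \varphi_{\xi(\mu)} = \varphi_{\xi(\mu)}$, and summing over $\mu$ yields the required identity.

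The main obstacle I anticipate is pinning down precisely how $\xi$ acts on the primitive idempotents $\varphi_\mu$ for $\mu \in \Lambda_{n-1,m-2}^B$, and in particular, verifying that the ``lowest addable node'' always lands in the second component rather than creating an additional part in the first component. If $\xi(\mu)$ instead acquired a two-part first component, it would leave $\Lambda_{n,m}^B$ and the argument would break down; this geometric compatibility between the box-adding rule defining $\xi$ and the type-B index set $\Lambda_{n,m}^B$ is the crucial ingredient.
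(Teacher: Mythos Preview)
Your approach is correct and considerably more direct than the paper's.  The paper argues first for projectives $P$: it rewrites $\Res{n}{n-1}(P)e_{n-1}$ via the adjunction of induction/restriction and the full faithfulness of the Schur functor on projectives, then identifies the relevant Hecke modules with tensor space to see that $\hind{n-1}{n}(V^{\otimes(n-1)})$ sits as a summand of $V^{\otimes n}$; finally it extends to arbitrary $M$ by taking a projective presentation and invoking the Five Lemma.  You bypass all of this by working directly with the idempotent $\xi(e_{n-1})$ and showing it is dominated by $e_n$.  Both arguments rest on the same combinatorial fact---that $\mu\mapsto\xi(\mu)$ carries $\Lambda^B_{n-1,m-2}$ into $\Lambda^B_{n,m}$---but you use it at the level of idempotents rather than pushing it through the Hecke-module picture.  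What the paper's route buys is that it never needs to unpack Wada's map $\xi$ on idempotents explicitly; what yours buys is a one-line proof once that unpacking is done.

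Two small points to tighten.  First, $\xi\colon\Sd^{n-1}\to\Sd^n$ is not unital (the index sets $\Lambda_{n-1,m-2}$ and $\Lambda_{n,m}$ differ), so restriction is not literally ``$\Res{n}{n-1}(M)=M$'' but rather $M\xi(1_{\Sd^{n-1}})$; this does not affect your conclusion since $M\xi(e_{n-1})=M\xi(1)\xi(e_{n-1})$.  Second, your ``main obstacle'' is genuine but easy to resolve: the lowest addable node is the one making $m_{\xi(\mu)}=m_\mu$ as elements of $\Hh_n$, which forces $|\xi(\mu)^{(1)}|=|\mu^{(1)}|$ (so the first component is only padded by a zero part) and appends a part of size $1$ to $\mu^{(2)}$.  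This is exactly why $\xi(\mu)$ stays in $\Lambda^B_{n,m}$, and it is also the content of the paper's observation that $\hind{n-1}{n}(V^{\otimes(n-1)})\hookrightarrow V^{\otimes n}$.
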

    \begin{proof}
First, assume $P \in \Sd^n\operatorname{-proj}$. By the adjunction of \cite[Corollary 3.18]{wada} and the Schur functor:
\begin{align*}
    \Res{n}{n-1}(P)e_{n-1} & \cong \Hom_{\Sd^{n-1}}(e_{n-1}\Sd^{n-1}, \Res{n}{n-1}(Pe_{n-1}) \\
    &\cong \Hom_{\Sd^n}(\Ind{n-1}{n}(e_{n-1}\Sd^{n-1}), P)\\
    &\cong \Hom_{\Hh_n}(\Omega_n(\Ind{n-1}{n}(e_{n-1}\Sd^{n-1})), \Omega_n(P))\\
    &\cong \Hom_{\Hh_n}(\Omega_n(\Ind{n-1}{n}(e_{n-1}\Sd^{n-1})), P\eh{n})
\end{align*}
By \cite[Corollary 4.18(iv)]{wada}, this is then isomorphic to:
\begin{align*}
  \Hom_{\Hh_n}(\hind{n-1}{n}(e_{n-1}\Sd^{n-1}\eh{n-1}), P\eh{n}) &\cong \Hom_{\Hh_n}((\hind{n-1}{n}(V^{\otimes (n-1)}), P\eh{n}) \\
  &\inplus \Hom_{\Hh_n} (e_n \Sd^n \eh{n}, P \eh{n}) \\
  &\cong \Hom_{\Sd^n}(e_n\Sd^n, P) \\
  &\cong P e_n
\end{align*}  
    Therefore, there is a functor $\mathcal{Q}_{n-1}$ such that on projective modules \[\left( \oml{n-1} \circ \Res{n}{n-1} \right) \oplus \mathcal{Q}_{n-1} = \oml{n} \]
Applying the Five Lemma to the exact sequences obtained by applying these functors to a projective resolution for an arbitrary $\Sd^n$ module 
\[P_2 \rightarrow P_1 \rightarrow M \rightarrow 0\]
again gives us the result for general $M$.  Therefore, $\Res{n}{n-1}(M)e_{n-1} \hookrightarrow Me_n.$
\end{proof}

\begin{cor}
    Given $M \in \Sd^n\operatorname{-mod},$
    \[ Me_n = 0 \implies \Res{n}{n-1}(M)e_{n-1}=0\]
\end{cor}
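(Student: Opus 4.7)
The plan is to observe that this corollary is an immediate consequence of the preceding \cref{Pres}, which established an injection of vector spaces
\[
\Res{n}{n-1}(M)\,e_{n-1} \hookrightarrow M\,\e.
\]
Since $\e = e_n$ by definition, the hypothesis $Me_n = 0$ forces the codomain of this injection to vanish, and therefore the domain $\Res{n}{n-1}(M)e_{n-1}$ must be zero as well.

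There is essentially no substantive step to carry out: the content is entirely packaged inside \cref{Pres}, where the real work was done via the adjunction between $\Ind{n-1}{n}$ and $\Res{n}{n-1}$ from \cite[Cor.\ 3.18]{wada} together with the fact that $\hind{n-1}{n}(V^{\otimes(n-1)})$ contains $V^{\otimes n} \cong e_n\Sd^n\eh{n}$ as a direct summand. The only thing one might want to spell out is that the injection is natural in $M$, so that applying it under the assumption $M\e = 0$ collapses the left-hand side.

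Thus the expected write-up is a single sentence invoking \cref{Pres}. There is no real obstacle, since the hard part (namely splitting off the summand $e_n\Sd^n\eh{n}$ inside the restriction of $\hind{n-1}{n}(V^{\otimes(n-1)})$, and then propagating the resulting split injection from projectives to arbitrary modules via a five-lemma argument applied to a projective presentation $P_2 \to P_1 \to M \to 0$) has already been handled in the proof of \cref{Pres}.
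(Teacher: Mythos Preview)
Your proposal is correct and matches the paper's approach: the corollary is stated immediately after \cref{Pres} with no proof, precisely because the injection $\Res{n}{n-1}(M)e_{n-1} \hookrightarrow M\e$ forces the domain to vanish once $M\e = Me_n = 0$. Your aside about naturality is unnecessary (the injection for a single fixed $M$ already suffices), but this does no harm.
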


\begin{thm}
    The induction and restriction functors of \cite{wada} induce functors 
\[ \Ind{n}{n+1}: \Sl^n\operatorname{-mod} \rightarrow \Sl^{n+1}\operatorname{-mod}\]
\[ \Res{n+1}{n}: \Sl^n\operatorname{-mod} \rightarrow \Sl^{n-1}\operatorname{-mod}\]
    compatible with the sequence of quotient functors in \cref{th:A}.
 \end{thm}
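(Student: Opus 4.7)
The plan is to view $\Sl^n\mmod$ as a Serre quotient of $\Sd^n\mmod$ and check that Wada's functors respect the kernel of the quotient. By \cref{idemp} we have $\Sl^n(m)\cong \e\Sd^n(\Lnm)\e$, so the truncation $\oml{n}(M)=M\e$ identifies $\Sl^n\mmod$ with the Serre quotient $\Sd^n\mmod/\mathcal{K}_n$, where
\[
\mathcal{K}_n:=\{M\in\Sd^n\mmod \mid M\e=0\}
\]
is a Serre subcategory because right multiplication by the idempotent $\e$ is exact. Under this identification, any exact functor on $\Sd^\bullet\mmod$ carrying $\mathcal{K}_\bullet$ into $\mathcal{K}_\bullet$ descends uniquely, up to natural isomorphism, to a functor on $\Sl^\bullet\mmod$ compatible with the truncation functors $\oml{\bullet}$.

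The two corollaries just proved are precisely the kernel-preservation statements needed: $\Ind{n}{n+1}$ sends $\mathcal{K}_n$ into $\mathcal{K}_{n+1}$ (in fact with equality, by the ``iff'' in the first corollary), and $\Res{n+1}{n}$ sends $\mathcal{K}_{n+1}$ into $\mathcal{K}_n$. The universal property of Serre quotients then produces the induced functors $\Ind{n}{n+1}\colon \Sl^n\mmod\to\Sl^{n+1}\mmod$ and $\Res{n+1}{n}\colon\Sl^{n+1}\mmod\to\Sl^n\mmod$. An explicit description: given $N\in\Sl^n\mmod$, form the lift $\widetilde{N}:=N\otimes_{\Sl^n}\e\Sd^n\in\Sd^n\mmod$ (this satisfies $\widetilde{N}\e\cong N$ since $\e\Sd^n\e=\Sl^n$), apply Wada's functor, and multiply by $\e$ again; independence of the choice of lift follows because any two lifts differ by an object of $\mathcal{K}_n$, which by the corollaries is sent to $\mathcal{K}$ and therefore killed after the subsequent truncation.

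Compatibility with the first arrow of \cref{th:A} is built into this construction. Compatibility with the full sequence $\Sd^n\mmod\to\Sl^n\mmod\to\Hh_n\mmod$ then follows by combining the above with \cref{cor:wada}, which provides the intertwining isomorphisms $\Omega_{n+1}\circ \Ind{n}{n+1}\cong \hind{n}{n+1}\circ\Omega_n$ and $\Omega_n\circ\Res{n+1}{n}\cong \hres{n+1}{n}\circ \Omega_{n+1}$; since \cref{th:A} shows $\Omega_n$ factors through $\oml{n}$ when $m$ is large, these intertwiners descend formally to the induced functors on $\Sl^\bullet\mmod$. The substantive content of the theorem is entirely contained in the two preceding corollaries; the remainder is bookkeeping in the universal property of Serre quotients, and I do not anticipate any obstacle beyond verifying that the tensor-product lift $\widetilde{N}=N\otimes_{\Sl^n}\e\Sd^n$ behaves functorially with respect to the bimodule tensor product defining $\Ind{n}{n+1}$.
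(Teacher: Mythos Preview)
Your proposal is correct and matches the paper's approach: the paper in fact gives no explicit proof of this theorem, treating it as an immediate consequence of the two preceding corollaries (kernel-preservation for $\Ind{n}{n+1}$ and $\Res{n}{n-1}$), which is exactly the substantive content you identify. Your write-up simply makes explicit the Serre-quotient bookkeeping that the paper leaves to the reader.
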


\subsection{Categorical action and crystal graphs}

\def\HEIS{\mathcal{H}eis}
\def\K{\Bbbk}
\newcommand{\slehat}{\mathfrak{\widehat{sl}}_e}
\newcommand{\slinfty}{\mathfrak{{sl}}_{\infty}}

One technique that has proven very powerful in recent decades is that of organizing functors like the induction and restriction into the actions of monoidal categories or 2-categories.  

Two types of such actions will be relevant for us:
\begin{enumerate}
	\item a {\bf quantum Heisenberg action} on a category $\mathcal{C}$ is a module category structure over the monoidal category $\HEIS_{\ell}(z,t)$ for a fixed level $\ell$ defined in \cite{brundanDefinitionQuantum2020}.  This is a formalization of the notion of having induction and restriction type functors which carry the action of finite Hecke algebras, Jucys--Murphy elements and satisfy a Mackey theorem.  
	\item a {\bf categorical Kac--Moody action} on a category $\mathcal{C}$ is an action of the Kac--Moody 2-category $\mathfrak{U}(\mathfrak{g})$ defined by Khovanov--Lauda \cite{khovanovCategorificationQuantum2010} and Rouquier \cite{Rou2KM} for a given root datum whose total category is $\mathcal{C}$.  While the definitions in the papers above were not obviously identical, later work of   \cite{Brundandef} showed that they coincide.  This formalizes the notion of breaking the category into a sum $\mathcal{C}\cong \oplus_{\mu}\mathcal{C}_{\mu}$ corresponding to the weight spaces of a $\mathfrak{g}$-action, and defining functors $E_i,F_i$ corresponding to the Chevalley generators.  
\end{enumerate}

The definition of these notions is quite complex and we will not need to use it in a deep way, so we refer to the readers to the references above for further details.  We will largely use them as a way of transporting well-known results for symmetric groups and type A Hecke algebras without having to repeat the proofs.    As in comparing with the conventions of \cite{LNX}, to compare with \cite{brundanHeisenbergKacMoody2020}, we need to choose square roots of $q$ and $Q$.  
\begin{lem}
	The categories $\bigoplus_{n}\Sl^n\mmod$ and $\bigoplus_{n}\Sd^n\mmod$ carry compatible quantum Heisenberg actions of level 2 by $\HEIS_2(q^{-1/2}-q^{1/2},\sqrt{-1/Q})$ for all choices of $q,Q\in \K$ with $E=\Res{}{},F=\Ind{}{}$.  
\end{lem}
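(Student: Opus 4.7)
My plan is to first construct the quantum Heisenberg action on $\bigoplus_n \Sd^n\mmod$, where established techniques apply, and then transport it to $\bigoplus_n \Sl^n\mmod$ through the idempotent truncation by $\e$.

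For $\bigoplus_n \Sd^n\mmod$, Wada's functors $\Ind{n}{n+1}$ and $\Res{n+1}{n}$ are biadjoint by \cite[Cor. 3.18]{wada} and intertwine with Hecke induction/restriction via the Schur functor $\Omega_n$. The category $\bigoplus_n \Hh_n\mmod$ carries the standard Heisenberg action coming from viewing $\Hh_n$ as the level-$2$ cyclotomic quotient of the affine Hecke algebra: the crossing comes from multiplication by $T_i$, the dot from the Jucys--Murphy element $L_{n+1}$, and the Mackey-type isomorphism is the classical double-coset decomposition of $\Res\Ind$. With the conventions $(T_i-q)(T_i+1)=0$ and $(T_0-Q)(T_0+1)=0$, translating to the parameter normalizations of \cite{brundanDefinitionQuantum2020} yields $z = q^{-1/2}-q^{1/2}$ and $t = \sqrt{-1/Q}$. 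These generators and relations lift to $\Sd^n$ because each generating natural transformation is multiplication by an element of $\Hh_n$ on the permutation module $M^\lambda$, and each defining relation reduces to an identity on the Hecke side via the Schur functor, which is full on projectives.

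For $\bigoplus_n \Sl^n\mmod$, the functors descend via $\e$ by the preceding theorem, so $E = \Res{}{}$ and $F = \Ind{}{}$ are well-defined biadjoint endofunctors. The Heisenberg natural transformations also descend: the action of any element of $\Hh_n$ on a permutation module commutes with the projection $M \mapsto M\e$, since $\e \in \Sd^n$ is a sum of idempotents fixing the appropriate summands of $V^{\otimes n}$. The defining relations then hold on the $\Sl^n$-side because idempotent truncation is exact and carries identities to identities. Compatibility of the two actions is automatic: by construction, the quotient functor $\oml{n}$ intertwines both the functors and all the generating natural transformations, since each is defined by the same underlying element of $\Hh_n$, making $\oml{n}$ a morphism of $\HEIS_2(z,t)$-module categories.

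The main obstacle is the matching of parameters, which requires fixing square roots of $q$ and $Q$ (in $\K$ or an extension) and tracking how these enter Brundan's normalizations of the crossing, dot, and cup/cap morphisms; this is the same kind of bookkeeping needed to compare conventions with \cite{LNX}, as already flagged in the paper. Once this is settled, the verification of each defining relation of $\HEIS_2$ reduces either to a known identity in $\Hh_n$ (braid and quadratic relations, symmetry of JM polynomials, the level-$2$ cyclotomic relation) or to an identity on permutation modules that descends to the Hecke side via $\Omega_n$.
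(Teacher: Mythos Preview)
Your proposal is correct and follows essentially the same strategy as the paper: cite the known quantum Heisenberg action on $\bigoplus_n\Hh_n\mmod$ from \cite{brundanDefinitionQuantum2020}, then lift the generating natural transformations to the Schur side using that the Schur functor is fully faithful on projectives (you write ``full,'' but you need and are implicitly using faithfulness as well, so that relations transport), and finally extend from projectives to all modules via cokernels. The only organizational difference is that the paper treats $\Sd^n$ and $\Sl^n$ in parallel---each has its own Schur functor to $\Hh_n$ that is fully faithful on projectives---whereas you first build the action on $\Sd^n$ and then descend to $\Sl^n$ through the idempotent $\e$; both routes yield the same result and the compatibility statement.
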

The most important part of this action is the endomorphism $x\colon \Res{n}{n-1}\to \Res{n}{n-1}$ induced by multiplication by the Jucys--Murphy element $L_n$, and its mate $x^*\colon \Ind{n}{n-1}\to \Ind{n}{n-1}$.

\begin{proof}
	The analogous result for modules over the Hecke algebra is proven in 
\cite[Th. 6.3]{brundanDefinitionQuantum2020}; in the notation of that paper, we have $f(u)=u^2+(1-Q)u-Q$, so $t=\sqrt{-Q}$.  

This action extends to $\bigoplus_{n}\Sl^n\mmod$ and $\bigoplus_{n}\Sd^n\mmod$ by a standard argument (see, for example \cite[Th. 5.1]{ShanCrystal} and \cite[Th. 7.1]{brundanDefinitionQuantum2020}):  since the Schur functor $\Omega_n$ is fully-faithful on projectives, we can define the quantum Heisenberg action on $\bigoplus_{n}\Sd^n\operatorname{-proj}$ or $\bigoplus_{n}\Sl^n\operatorname{-proj}$ uniquely by the assumption that it commutes with $\Omega$.  On the other hand, any module over $\Sd^n$ or $\Sl^n$ can be presented as a cokernel of a map between these projectives.  By naturality and the uniqueness of this presentation up to homotopy, this extends the quantum Heisenberg action to the category of all modules $\bigoplus_{n}\Sl^n\mmod$ or $\bigoplus_{n}\Sd^n\mmod$.  
\end{proof}
For us, the primary significance of this result is that we can use it to define a Kac--Moody categorical action on $\bigoplus_{n}\Sl^n\mmod$ and $\bigoplus_{n}\Sd^n\mmod$.  
Consider the subset $U=Qq^{\mathbb Z}\cup -q^{\mathbb Z}$ of $\K$.  We make this into a directed graph by adding an arrow $u\to q^{-1}u$ for all $u\in I$. Let $\mathfrak{g}_{U}$ be the associated Kac--Moody algebra; in the notation of \cite{brundanHeisenbergKacMoody2020}, this would be denoted $\mathfrak{sl}'_{U}$.  This makes $U$ into a directed graph, which can take 4 possible forms:
\begin{enumerate}
	\item If $q$ is a primitive $e$th root of unity and $Q\in -q^{\mathbb Z}$, then $U$ is a single $e$-cycle and $\mathfrak{g}_{U}\cong \slehat$
	\item If $q$ has infinite multiplicative order and $Q\in -q^{\mathbb Z}$, then $U$ is a single copy of the $A_{\infty}$ Dynkin diagram and $\mathfrak{g}_{U}\cong \slinfty$.
		\item If $q$ is a primitive $e$th root of unity and $Q\notin -q^{\mathbb Z}$, then $U$ is  disjoint union of two $e$-cycles and $\mathfrak{g}_{U}\cong \slehat\oplus\slehat$
	\item If $q$ has infinite multiplicative order and $Q\notin -q^{\mathbb Z}$, then $U$ is a single copy of the $A_{\infty}$ Dynkin diagram and $\mathfrak{g}_{U}\cong \slinfty\oplus\slinfty$.
\end{enumerate}  
Consider the subfunctors
\begin{align*}
	 E_u(M)&=\{ m\in \Res{n+1}{n}(M) \mid (x-u)^N m=0 \text{ for all }N\gg 0\}\\ 
	 F_u(M)&=\{ m\in \Ind{n}{n+1}(M) \mid (x^*-u)^N m=0 \text{ for all }N\gg 0\}.
\end{align*}  
By \cite[Prop 3.7]{jf}, we can see that $E_u\neq 0$ if and only if $u\in U$, and similarly with $F_u$, so 
\[E(M)= \bigoplus_{u\in U} E_u(M) \qquad F(M)=\bigoplus_{u\in U} F_u(M).\]
By \cite[Th. A]{brundanHeisenbergKacMoody2020}, it follows immediately that:
\begin{lem}
	The functors $E_u$ and $F_u$ induce an action of the Kac--Moody 2-category $\mathfrak{U}(\mathfrak{g}_U)$ on $\bigoplus_{n}\Sl^n\mmod$ and $\bigoplus_{n}\Sd^n\mmod$. 
\end{lem}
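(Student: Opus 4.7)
The plan is to invoke \cite[Th. A]{brundanHeisenbergKacMoody2020} directly, which asserts that any quantum Heisenberg action of the type established in the previous lemma canonically upgrades to a categorical Kac--Moody action, where the weight-space decomposition of the category is obtained by splitting $E=\Res{}{}$ and $F=\Ind{}{}$ into generalized eigenspaces for the endomorphism $x$ and its mate $x^{\ast}$. The content to verify is therefore twofold: first, that the eigenvalues actually appearing are contained in $U = Qq^{\mathbb{Z}} \cup -q^{\mathbb{Z}}$ (so that the decompositions $E=\bigoplus_{u\in U} E_u$ and $F=\bigoplus_{u\in U} F_u$ are the full decompositions), and second, that the resulting Dynkin data matches the description of $\mathfrak{g}_U$.

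For the first point, I would recall that the eigenvalues of $L_n$ acting on $\Res{n+1}{n}(M)$ are exactly the residues of addable/removable boxes of bipartitions labeling the composition factors: by \cite[Prop 3.7]{jf}, these residues have the form $\operatorname{res}(r,c,\ell)$ from \eqref{content-def}, namely $Qq^{c-r}$ when $\ell=1$ and $-q^{c-r}$ when $\ell=2$. Hence every eigenvalue lies in $U$, and conversely every element of $U$ arises, so the cited reference already gives $E_u \neq 0 \Leftrightarrow u\in U$. The arrows $u\to q^{-1}u$ on $U$ then encode the familiar fact that adding a box at $(r,c+1,\ell)$ shifts residue by a factor of $q^{-1}$, which is precisely the edge structure that Brundan's theorem uses to extract the Chevalley generators.

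For the second point, the identification of $\mathfrak{g}_U$ in the four listed cases is a purely combinatorial analysis of the directed graph on $U$: it is connected or disconnected according to whether $Qq^{\mathbb{Z}} \cap -q^{\mathbb{Z}}$ is empty, i.e.\ whether $Q\in -q^{\mathbb{Z}}$; and each component is a cycle of length $e$ or a copy of $A_{\infty}$ according to whether $q$ has finite multiplicative order $e$ or infinite order. Invoking \cite[Th. A]{brundanHeisenbergKacMoody2020} in each case yields the desired action of $\mathfrak{U}(\mathfrak{g}_U)$.

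The compatibility of the two actions under the quotient functor $\oml{n}\colon \Sd^n\mmod \to \Sl^n\mmod$ is inherited from the compatibility of the quantum Heisenberg actions established in the previous lemma, since $E_u$ and $F_u$ are defined using only intrinsic data (the Jucys--Murphy endomorphism and its mate). The only mild subtlety, which is not so much an obstacle as a bookkeeping point, is that both actions were constructed by first defining them on projectives via $\Omega_n$ and then extending via cokernel presentations; naturality of the eigenspace decomposition with respect to maps between projectives ensures that the $E_u$, $F_u$ descend to honest subfunctors on the full module categories.
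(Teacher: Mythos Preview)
Your proposal is correct and follows essentially the same approach as the paper: both arguments reduce immediately to \cite[Th.~A]{brundanHeisenbergKacMoody2020}, applied to the quantum Heisenberg action established in the preceding lemma, with the eigenvalue set $U$ identified via \cite[Prop.~3.7]{jf}. The paper's proof is in fact a single sentence citing that theorem, with the verification of the eigenvalue set and the combinatorial analysis of $\mathfrak{g}_U$ placed in the surrounding discussion rather than in the proof proper; your more detailed write-up simply unpacks that context (one tiny slip: moving from $(r,c,\ell)$ to $(r,c+1,\ell)$ multiplies the residue by $q$, not $q^{-1}$, but this does not affect the argument).
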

This, in turn, has an important consequence for us that will give us a great deal more control over the simple modules over $\Sl^n$.  Let us record here the consequences of this action based on earlier work.  
\begin{samepage}

\begin{lem}\hfill
\begin{enumerate}
    \item 
	The sets of simples $\Simp(\Sl^n)$ and $\Simp(\Sd^n)$ for all $n\geq 0$ form a crystal graph for the  representation of $\mathfrak{g}_{U}$ on the Grothendieck group of these categories.  The inclusions $\Simp(\Hh^n)\hookrightarrow\Simp(\Sl^n)\hookrightarrow\Simp(\Sd^n)$ are maps of crystals.
 \item The simple $\tilde{e}_u(L)$ is isomorphic to the unique simple quotient and  unique simple submodule of $E_u(L)$ and $\tilde{f}_u(L)$  is isomorphic to the unique simple quotient and  unique simple submodule of $F_u(L)$.  
 \item 
	Under the bijection of the set of simples for $ \Sd^n(\Lnm)$ with the set $\Pi_n$ of all bipartitions of $n$, the Kashiwara operators act as follows:
 \begin{itemize}
     \item Consider the set $\mathcal{AR}_u$ of boxes of residue $u$ in the diagram of the bipartition $(\lambda^{(1)},\lambda^{(2)})$ which are addable or removable. 
     \item Order this set with all boxes in $\lambda^{(1)}$ larger than those in $\lambda^{(2)}$, and with those further to the right higher within components.  That is, we have $(r,c,\ell)>(r',c',\ell')$ if $\ell <\ell'$ or if $\ell=\ell'$ and $c>c'$.   Note that addable boxes are ordered according to the induced dominance order on Young diagrams obtained when they are added.
     \item List the boxes in $\mathcal{AR}_u$ in decreasing order, and replace all removable boxes with open parentheses ``$($'' and all addable boxes with close parentheses ``$)$''.  The Kashiwara operator $\tilde{f}_i$ acts by adding the lowest (in our order) addable box whose parenthesis is uncanceled, and $\tilde{e}_i$ acts by removing the highest removable box whose parenthesis is uncanceled.
 \end{itemize}
 We can state this more conceptually by saying that if we only consider adding and removing boxes with residue $u$, then we obtain a tensor product of copies of the 2-dimensional representation of $\mathfrak{sl}_2$ for each element of $\mathcal{AR}_u$, tensored in our chosen order.  
\end{enumerate}
	
\end{lem}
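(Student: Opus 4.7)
The plan is to derive all three parts from the categorical $\mathfrak{U}(\mathfrak{g}_U)$-action established in the preceding lemma, invoking standard structural theorems for parts (1) and (2), and analyzing the cellular branching for part (3).

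For parts (1) and (2): the existence of a $\mathfrak{g}_U$-crystal on the set of simples of any category carrying a $\mathfrak{U}(\mathfrak{g}_U)$-action, with $\tilde{e}_u(L)$ defined as the unique simple head of $E_u(L)$ (equivalently the unique simple socle, by biadjointness) whenever this is nonzero, is the Chuang--Rouquier theorem applied node-by-node together with its extension by Rouquier and Losev to general Kac--Moody type.  This already gives (2).  For the compatibility of the chain $\Simp(\Hh_n)\hookrightarrow \Simp(\Sl^n)\hookrightarrow\Simp(\Sd^n)$ with the Kashiwara operators, the key point is that the quotient functors $\Omega_n$ and $\oml{n}$ come from multiplication by the idempotents $\eh{n}$ and $\e$, which commute with the Jucys--Murphy endomorphism $x$ and hence with its generalized eigenspace projectors. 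So $\Omega_n$ and $\oml{n}$ intertwine $E_u$ and $F_u$.  Since a Serre quotient sends a simple to either a simple or zero, a standard diagram chase shows that the inclusion $L\mapsto L^+$ commutes with the crystal operators wherever both sides are defined.

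Part (3) is the substantive combinatorial content.  The main input is Wada's branching theorem: $\Res{n+1}{n}(\Csd{\lambda})$ carries a filtration by cell modules $\Csd{\mu}$ with $\mu$ obtained from $\lambda$ by removing one box, ordered by reverse dominance, and by the calculation in the proof of \cite[Prop. 3.7]{jf} the Jucys--Murphy element $L_{n+1}$ acts on the subquotient indexed by $\mu$ as multiplication by the residue from \eqref{content-def} of the removed box.  Consequently $E_u(\Csd{\lambda})$ is filtered by cell modules indexed by removable boxes of residue $u$, and analogously $F_u(\Csd{\lambda})$ by addable boxes.  The highest weight structure on $\Sd^n(\Lnm)$ then forces the unique simple head of $E_u(\Ssd{\lambda})$ to be $\Ssd{\mu_0}$ for the $\vartriangleright$-maximal $\mu_0$ appearing, and translating dominance maximality into maximality in the stated box order produces the parenthesis rule.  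The conceptual rephrasing as a tensor product of copies of the fundamental $\mathfrak{sl}_2$-representation is then the standard Fock space description of this crystal.

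The principal obstacle is the combinatorial verification at the end of part (3): that the $\vartriangleright$-maximal bipartition obtained from $\lambda$ by removing a box of fixed residue $u$ is precisely the one produced by the parenthesis rule on the ordered list of addable and removable boxes.  This is a classical fact about $\mathfrak{sl}_2$-strings in Fock space, but carrying it out requires careful tracking of how the imposed ordering---with boxes of $\lambda^{(1)}$ larger than those of $\lambda^{(2)}$ and rightward boxes larger within each component---interacts with dominance on the resulting bipartitions across both components.
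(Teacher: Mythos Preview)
Your treatment of parts (1) and (2) is essentially the paper's: both deduce them from general structural results on categorical Kac--Moody actions (the paper cites \cite[Th.~4.31]{brundanCategoricalActions2016}), and your remark that the quotient functors commute with the generalized $x$-eigenprojectors is a reasonable elaboration of why the inclusions of simples are crystal maps.

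Part (3), however, has a genuine gap.  The assertion that the highest weight structure forces $\operatorname{hd}\bigl(E_u(\Ssd{\lambda})\bigr)\cong \Ssd{\mu_0}$ for the $\vartriangleright$-maximal $\mu_0$ obtainable by removing a residue-$u$ box is neither justified nor correct, and the claimed reduction ``dominance maximality $\Rightarrow$ parenthesis rule'' fails.  Concretely, take $q$ a primitive cube root of unity, $Q=-q$, and $\lambda=((1),(2,1))$.  The residue-$(-q)$ boxes, in the stated order, are $R_1=(1,1,1)>R_2=(1,2,2)>A=(3,1,2)$ with $R_1,R_2$ removable and $A$ addable; the parenthesis word $(\,(\,)$ cancels $R_2$ against $A$, so $\tilde e_{-q}$ removes $R_1$, yielding $(\emptyset,(2,1))$.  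But removing $R_2$ gives $((1),(1,1))\vartriangleright(\emptyset,(2,1))$.  So the dominance-maximal removal is \emph{not} the crystal operator.  Knowing the standard filtration on $E_u(\Csd{\lambda})$ from Wada does not by itself determine $\operatorname{hd}\bigl(E_u(\Ssd{\lambda})\bigr)$: passing from branching on standards to the crystal on simples is precisely the content of Losev's theorem on highest weight categorifications, which your argument never invokes.

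The paper proves (3) by a different route that supplies exactly this missing ingredient.  It identifies $\bigoplus_n\Sd^n\mmod$ with category $\cO$ for a cyclotomic Cherednik algebra at suitable parameters via \cite[Cor.~3.11]{WebRou}, notes that Losev has verified the highest weight categorification axioms for that category \cite[\S4.2]{losevHighestWeight2013}, and then reads off the explicit combinatorial crystal as the $\boldsymbol{\vartheta}$-weighted crystal of \cite[Cor.~5.6]{WebRou}.  Your ``principal obstacle'' is therefore not a combinatorial bookkeeping issue but a categorical one: you need Losev's machinery (or an equivalent argument) to link the branching rule on standards to the Kashiwara operators on simples.
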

	
\end{samepage}

\begin{proof}
    Statements (1) and (2) are consequences of \cite[Th. 4.31]{brundanCategoricalActions2016}.  
    
    Statement (3) follows from the fact that the categorical action on $\bigoplus_{n}\Sd^n\mmod$ is a ``highest weight categorification'' in the sense of \cite{losevHighestWeight2013}.  This is confirmed for the category $\mathcal{O}$ over a cyclotomic Cherednik algebra in \cite[\S 4.2]{losevHighestWeight2013}; by \cite[Cor. 3.11]{WebRou} for any integer $d$, we can always choose parameters of the Cherednik algebra such that the categories $\Sd^n$ for $n\leq d$ are equivalent to category $\mathcal{O}$ with these parameters compatibly with the functors $\Ind{n}{n-1}, \Res{n}{n-1}$. Thus, the description of the crystal above follows from \cite[Cor. 5.6]{WebRou}, since the structure we have described above is the $\boldsymbol{\vartheta}$-weighted crystal structure for an appropriate choice of parameters.
\end{proof}
\begin{example}
\ytableausetup{boxsize=1em}
    Let $q=Q=-1$.  In this case $U=\{1,-1\}$, and  the first few levels of the crystal graph are shown in Figure \ref{fig:ex-1}.
\begin{figure}[htb]
    \centering
\[\begin{tikzcd}[row sep=small]
	&& {\left( \ydiagram{2},\emptyset\right)} & {\left(\ydiagram{2},\ydiagram{1}\right)} & {\left(\ydiagram{1,1},\ydiagram{1}\right)} \\
	& {\left(\ydiagram{1},\emptyset\right)} & {\left(\ydiagram{1,1},\emptyset\right)} & {\left(\ydiagram{2,1},\emptyset\right)} \\
	{\left( \emptyset,\emptyset\right)} & {\left(\emptyset,\ydiagram{1}\right)} & {\left(\ydiagram{1},\ydiagram{1}\right)} & {\left(\ydiagram{1},\ydiagram{1,1}\right)} \\
	&& {\left(\emptyset,\ydiagram{1,1}\right)} & {\left(\emptyset,\ydiagram{1,1,1}\right)} \\
	&& {\left(\emptyset,\ydiagram{2}\right)} & {\left(\emptyset,\ydiagram{3}\right)} & {\left(\emptyset,\ydiagram{2,1}\right)}
	\arrow["{-1}", from=3-1, to=3-2]
	\arrow["{-1}", from=3-2, to=3-3]
	\arrow["1", from=2-2, to=2-3]
	\arrow["1", from=3-2, to=4-3]
	\arrow["{-1}", from=1-3, to=1-4]
	\arrow["{-1}", from=5-3, to=5-4]
	\arrow["1"', from=4-3, to=5-5]
	\arrow["1", from=2-3, to=2-4]
	\arrow["1", from=3-3, to=3-4]
	\arrow["{-1}", from=4-3, to=4-4]
	\arrow["{-1}", from=2-3, to=1-5]
\end{tikzcd}\]
    \caption{Crystal structure in the case where $q=Q=-1$.}
    \label{fig:ex-1}
\end{figure}
For example, if we consider ${\left(\emptyset,\ydiagram{2}\right)} $, this has two addable boxes with residue $-1$, which are $(1,1,1)>(3,1,2)$, so the Kashiwara operator acts by adding the lower of these; our sequence of parentheses is $))$.  On the other hand, $(2,1,2)>(1,2,2)$ are a removable and an addable box with residue $1$, so our sequence of parentheses is $()$; there are no uncanceled parentheses, so $\tilde{e}_1,\tilde{f}_1$ both kill this Young diagram.

On the other hand, let us consider the case of the Counterexample \ref{sec:counterexample} where $Q=-q$ for $q$ generic (for the portion we show, $q^2\neq 1,q^3\neq 1$ suffices). In this case the first few levels of the crystal graph are shown in Figure \ref{fig:ex-2}.
\begin{figure}[ht]
    \centering
\[\begin{tikzcd}[row sep=small]
	&&& \cdots \\
	&& {\left(\ydiagram{2},\emptyset\right)} & \cdots \\
	& {\left(\ydiagram{1},\emptyset\right)} & {\left(\ydiagram{1},\ydiagram{1}\right)} & {\left(\ydiagram{1,1,1},\emptyset\right)} & \cdots \\
	{\left(\emptyset,\emptyset\right)} && {\left(\ydiagram{1,1},\emptyset\right)} \\
	& {\left(\emptyset,\ydiagram{1}\right)} & {\left(\emptyset,\ydiagram{1,1}\right)} & {\left(\ydiagram{2,1},\emptyset\right)} & \cdots \\
	&& {\left(\emptyset,\ydiagram{2}\right)} & \cdots \\
	&&& \cdots
	\arrow["{-1}", from=4-1, to=5-2]
	\arrow["{-q^{-1}}", from=5-2, to=5-3]
	\arrow[from=2-3, to=1-4]
	\arrow[from=6-3, to=7-4]
	\arrow["{-q^{2}}"', from=4-3, to=5-4]
	\arrow[from=3-3, to=2-4]
	\arrow[from=5-3, to=6-4]
	\arrow["{-q}"{description}, from=4-1, to=3-2]
	\arrow["{-1}", from=3-2, to=3-3]
	\arrow["{-q}", from=5-2, to=6-3]
	\arrow["{-q^2}", from=3-2, to=2-3]
	\arrow["{-q^{-1}}", from=4-3, to=3-4]
	\arrow[from=3-4, to=3-5]
	\arrow[from=5-4, to=5-5]
\end{tikzcd}\]
    \caption{Crystal structure in the case of Counterexample \ref{sec:counterexample} where $Q=-q$ for $q$ generic.}
    \label{fig:ex-2}
\end{figure}

\end{example}

This is useful for us since it implies that:
\begin{cor}\label{LNXcl}
The subsets of  \LNX{o} and \LNX{e} bipartitions and their complements are both closed under the action of Kashiwara operators.  In particular, any crystal graph component with non-empty intersection with $\LnmB$  for $m$ odd consists of \LNX{o} bipartitions, and similarly for $m$ even and \LNX{e} bipartitions.
\end{cor}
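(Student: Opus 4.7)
My plan is to use the fact that the categorical $\mathfrak{g}_U$-action descends to the quotient category $\Sl^n\mmod \cong \Sd^n\mmod / I$, where $I$ is the Serre subcategory generated by the simples $\Ssd{\lambda}$ with $\lambda$ not \LNX{m}. The exact quotient functor $\oml{n}$ intertwines induction and restriction (by the construction of the induced functors on $\Sl^n\mmod$ in the previous section), and since $E_u, F_u$ arise as generalized eigenspaces of the natural Jucys--Murphy endomorphism $x$ acting on these, they too commute with $\oml{n}$ up to natural isomorphism. Consequently $I$ is stable under both $E_u$ and $F_u$.

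From this I obtain closure of the non-\LNX{m} set directly: for any non-\LNX{m} bipartition $\lambda$, the objects $E_u(\Ssd{\lambda})$ and $F_u(\Ssd{\lambda})$ lie in $I$, hence so do their simple heads $\Ssd{\tilde{e}_u(\lambda)}$ and $\Ssd{\tilde{f}_u(\lambda)}$ (identified via part (2) of the preceding lemma). Thus each of $\tilde{e}_u(\lambda),\tilde{f}_u(\lambda)$ is either zero or again non-\LNX{m}.

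For closure of the \LNX{m} set itself, I invoke the standard fact that $\tilde{e}_u$ and $\tilde{f}_u$ are mutually inverse partial bijections on the crystal: if $\lambda$ is \LNX{m} and $\mu := \tilde{f}_u(\lambda) \neq 0$ in $\Pi_n$, then $\tilde{e}_u(\mu) = \lambda$, so were $\mu$ not \LNX{m}, the previous paragraph would force $\lambda = \tilde{e}_u(\mu)$ to also be non-\LNX{m}, contradicting our hypothesis; the symmetric argument handles $\tilde{e}_u$. The ``in particular'' statement is then immediate from \cref{lem:LB-LNX}: any crystal component meeting $\LnmB$ contains an \LNX{m} bipartition, so by the closure just established the whole component consists of \LNX{m} bipartitions.

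The main point requiring care is verifying that the Kac--Moody categorification on $\Sd^n\mmod$ really does descend through the Serre quotient to $\Sl^n\mmod$ in a way compatible with $\oml{n}$. This should be a formal consequence of the exactness of $\oml{n}$ together with the intertwining of $\oml{n}$ with induction and restriction, but making it rigorous means tracking the natural transformations constituting the 2-representation through the quotient rather than merely appealing abstractly to \cite[Th. A]{brundanHeisenbergKacMoody2020} after the fact.
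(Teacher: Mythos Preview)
Your proposal is correct and is essentially the same approach as the paper's.  The paper states this corollary without proof because it is immediate from part (1) of the preceding lemma: the inclusion $\Simp(\Sl^n)\hookrightarrow\Simp(\Sd^n)$ is a map of crystals, so its image (the \LNX{m} bipartitions) is a subcrystal, and since Kashiwara operators are partial inverses the complement is closed as well.  What you have written is an explicit unpacking of why that inclusion is a crystal map, via stability of the Serre kernel of $\oml{n}$ under $E_u,F_u$---and this stability is exactly the content of the two earlier corollaries in the section (namely $Me_n=0 \iff \Ind{n}{n+1}(M)e_{n+1}=0$ and $Me_n=0 \implies \Res{n}{n-1}(M)e_{n-1}=0$).

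One remark: your closing worry is more than you need.  You do not have to show that the full 2-representation (with all its natural transformations) descends through the quotient; for the crystal statement it suffices that the functors $E_u$ and $F_u$ preserve the Serre subcategory $I=\ker\oml{}$, which is automatic once $E$ and $F$ do (the $E_u,F_u$ are direct summands).  That is already established by the corollaries just cited, so no further ``tracking'' is required.
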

Note that this does not mean that the \LNX{m} bipartitions for any fixed value of $m$ are closed under crystal operators; this is not, in fact, the case.

Thus, for large $m$ we can determine the set of simples over $\Sl^n(m)$ for all $n$ by testing a single simple in each component of the crystal graph.  

In the first example above, only the simples in the largest component are Kleshchev, whereas our rank 2 calculation shows that all the components appearing consist of \LNX{o} bipartitions.  There is a component of bipartitions which are not \LNX{o} generated by ${\left(\ydiagram{2,2},\emptyset\right)}$ by Corollary \ref{formula}.

In the second example, the component of the crystal graph that contains $\left(\ydiagram{1,1},\emptyset\right)$ is not \LNX{o}.  

\section{Proof of the main theorems}
We can factor the polynomial $f_n(Q,q)$ as a product $ f_n(Q,q) = 
\pol{n}(Q,q) \cdot b_n(Q,q)$ where
\[ 
\pol{n}(Q,q) := \prod_{i=2-\lfloor \frac{n}{2}\rfloor}^{n-1} (Q+q^i) \qquad b_n(Q,q):= \prod_{-(n-1)}^{i=1-\lfloor \frac{n}{2} \rfloor}(Q+q^i). \]

We can thus rephrase the equivalence of (3) and (4) in \cref{th:B} as:
\begin{thm}\label{th:quasi-h}
    Assume $m$ large odd. The algebra $\Sl^n(m)$ is quasi-hereditary if and only if $\pol{n}(Q,q) \neq 0$.
\end{thm}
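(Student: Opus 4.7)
The plan is to combine the reduction of Corollary~\ref{cor:bipartitions}---which for $m$ large equates quasi-hereditarity of $\Sl^n(m)$ with every bipartition of $n$ being \LNX{o}---with the explicit criterion of Corollary~\ref{formula} for bipartitions of the form $(\lambda,\emptyset)$ and the crystal closure of Corollary~\ref{LNXcl}.

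For the ``only if'' direction I would argue by contrapositive. Assuming $\pol{n}(Q,q) = 0$, pick an integer $k$ with $Q = -q^k$ and $2 - \lfloor n/2 \rfloor \leq k \leq n-1$. I would then exhibit a partition $\lambda$ of $n$ containing a box $(a,b,1)$ with $a > 1$ and $a - b = k$: if $k \geq 1$, take $\lambda = (1^n)$ and the box $(k+1,1,1)$; if $k \leq 0$, take a partition with $\lambda_2 \geq 2-k$ (which exists because $2-k \leq \lfloor n/2\rfloor$) and the box $(2, 2-k, 1)$. This box has residue $Q q^{b-a} = -q^k q^{-k} = -1$, so Corollary~\ref{formula} shows that $(\lambda, \emptyset)$ is not \LNX{o}, forcing $\Sl^n(m)$ to fail to be quasi-hereditary.

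For the ``if'' direction, assume $\pol{n}(Q,q) \neq 0$. Since every box $(a,b,1)$ with $a > 1$ in any partition of $n$ has $a - b \in [2 - \lfloor n/2\rfloor, n-1]$, Corollary~\ref{formula} gives that every bipartition $(\lambda,\emptyset)$ is \LNX{o}; meanwhile Lemma~\ref{lem:LB-LNX} gives that every bipartition in $\LnmB$---for $m$ large odd, exactly those with $\lambda^{(1)}$ having at most one row---is \LNX{o}. By Corollary~\ref{LNXcl} the set of \LNX{o} bipartitions is a union of $\mathfrak{g}_U$-crystal components, so it suffices to show that every crystal component of bipartitions of $n$ meets at least one of these two families.

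The main obstacle is this last claim about crystal components. I would attack it by induction on the number of rows $\ell$ of $\lambda^{(1)}$: the case $\ell \leq 1$ places $\lambda \in \LnmB$, and for $\ell \geq 2$ the plan is to use $\tilde{e}_u$ to remove the bottom-rightmost box of $\lambda^{(1)}$ (whose residue is $u = Q q^{\lambda^{(1)}_\ell - \ell}$) and then $\tilde{f}_v$ to reattach a box elsewhere---ideally into $\lambda^{(2)}$---producing a bipartition in the same crystal component with strictly fewer rows in the first component. The parenthesis-canceling combinatorics of Section~5.2 is where the condition $\pol{n}(Q,q) \neq 0$ enters, ruling out residue collisions with addable boxes higher in $\lambda$ that could cancel the removable parenthesis or divert the operator. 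As a fallback I would generalize the unstacking argument of Corollary~\ref{formula}: construct a semi-standard tableau $T$ of shape $\lambda$ with type in $\LnmB$ (putting row $r$ of $\lambda^{(1)}$ into entry $(r-1)_2$ and rows of $\lambda^{(2)}$ into the subsequent entries) so that $\varphi_T$ generates $\Csl{\lambda}$, and compute $\langle \varphi_T,\varphi_T\rangle$ as a product of factors $(Q+q^{a-b})$ ranging over boxes in $\lambda^{(1)}$ with $a>1$, which is precisely controlled by $\pol{n}(Q,q)$.
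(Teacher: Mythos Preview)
Your ``only if'' direction is correct and essentially matches the paper.  Your primary crystal-operator approach to the ``if'' direction, however, is not a proof: you never justify why $\pol{n}(Q,q)\neq 0$ should prevent the bottom removable box of $\lambda^{(1)}$ from being cancelled, and in fact when $q$ is a root of unity of small order there can be many residue collisions that your sketch does not address.

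Your fallback, by contrast, actually works and is cleaner than the paper's own argument---but you have stated it suboptimally.  The claim that $\varphi_T$ generates $\Csl{\lambda}$ is both unnecessary and harder than what you need (and the uniqueness shortcut from Section~4 fails: when $\lambda^{(2)}\neq\emptyset$ there are typically several semistandard tableaux of shape $\lambda$ and type $\mu=((\lambda^{(1)}_1),(\lambda^{(1)}_2,\dots,\lambda^{(1)}_\ell,\lambda^{(2)}_1,\dots))$).  What you should say instead is simply: setting $p=|\lambda^{(1)}|$, one has $m_\lambda=m_\mu\cdot\prod_{i=\lambda^{(1)}_1+1}^{p}(L_i+1)$ exactly as in Section~4 (the commutation of this product with $x_\lambda$ holds because $T_p\notin\Sym_\lambda$), and $t^\lambda$ is still the unique standard tableau with $\mu(t^\lambda)=T$, so $m_{T^\lambda T}=m_{TT^\lambda}=m_\lambda$.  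The same residue computation then yields
\[
\langle\varphi_T,\varphi_T\rangle=\prod_{(a,b,1):\,a>1}(Qq^{b-a}+1),
\]
and Lemma~\ref{klimits} (applied to $\lambda^{(1)}$ alone) gives $a-b\in[2-\lfloor n/2\rfloor,n-1]$, so $\pol{n}(Q,q)\neq 0$ forces this to be nonzero.  That already shows the cell form on $\Csl{\lambda}$ is nonzero, hence $\lambda$ is \LNX{o}; no generation statement and no crystal theory is needed.

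The paper instead argues the ``if'' direction by induction on a minimal $n$, reducing to a crystal-highest-weight $\lambda$, and then makes the key structural observation you are missing: if $\pol{n}\neq 0$ but $f_n=0$, then $q$ cannot be a root of unity of order $e<n$.  A parenthesis-cancellation argument on the lowest removable box of $\lambda^{(2)}$ then forces $q^c=1$ for some $c<n$, a contradiction.  Your fallback sidesteps this entirely and gives a direct, crystal-free computation; the paper's route, on the other hand, is what later generalises to the block-by-block analysis of Proposition~\ref{prop:blocks-qh}.
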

\begin{proof}
%
We can assume that $n$ is minimal such that we have a counter-example to this result; we have already confirmed that it is correct for $n\leq 2$.  

If $f_n(Q,q) \neq 0$, then \cite[Cor. 6.1.1]{LNX} shows that $\Sl^n(m)$ is quasi-hereditary, so we need only consider the case where $f_n(Q,q)=0$.  
  
{\bf ``Only if'' direction:}. Assume that  $\pol{n}(Q,q) = 0$, so $Q=-q^k$ for some $k\in [\frac{4-n}{2},n-1]$.
By \cref{formula}, the bipartitions $((1^n),\emptyset)$ is not \LNX{o} if $k>0$ and  $\left( \left( \lceil \frac{n}{2} \rceil, \lfloor \frac{n}{2} \rfloor \right), \emptyset \right)$ is not \LNX{o} if $k\leq 0$. By \cref{cor:bipartitions}, this implies that $\Sl^n$ is not quasi-hereditary.

{\bf ``If'' direction:} Now, assume that $\pol{n}(Q,q) \neq 0$.  First, note that the assumption that $\pol{n}(Q,q)\neq 0$ and $b_n(Q,q)=0$ can only hold if $q$ is not a root of unity of order $e<n$.  If $q$ were such a root of unity and $Q=-q^k$ for some $k\in [1-n,1-\lfloor \frac{n}{2} \rfloor]$, then $k\equiv k'\pmod e$ for some $k'\in [1,e]$, and $\pol{n}(Q,q) = 0$, contradicting our assumption.  Thus, we must have $e\geq n$ if $q^e=1$.

 Assume that there is a bipartition $(\lambda^{(1)},\lambda^{(2)})$ of $n$ which is not \LNX{o}.
If $\tilde{e}_u(\lambda)\neq 0$, then the resulting bipartition is not \LNX{o} by \cref{LNXcl} and $\pol{n-1}(Q,q)\neq 0$ since $\pol{n-1}(Q,q)$ divides $\pol{n}(Q,q)$.  Thus,  minimality of $n$ implies that $\lambda=(\lambda^{(1)},\lambda^{(2)})$ is highest weight for the crystal, i.e. $\tilde{e}_u(\lambda)=0$ for all $u\in U$. 
 By \cref{klimits} and the assumption that $\pol{n}(Q,q) \neq 0,$ we have that $\lambda^{(2)}\neq \emptyset$.  On the other hand, as observed before, we must have $\lambda\notin \LnmB$, so $\lambda^{(1)}\neq \emptyset$ (in fact, it must have at least 2 rows).  
    
  Now, consider the removable box $(r,c,2)$ in $\lambda^{(2)}$ in the last row of $\lambda^{(2)}$, i.e. that maximizes $r$ and minimizes $c$.  Let $u=-q^{c-r}$ be the residue of this box.  By the highest weight assumption, the parenthesis corresponding to this removable box must be canceled, and so there must be an addable box of the same residue with $c$ lower.  Of course, this addable box must be $(r+1,1,2)$, since that is the only one lower in our order.  Since these boxes have the same residue, we must have that $e$ divides $c$, but this means that $e\leq c<n$.  But this contradicts the argument we gave above that $e\geq n$, completing the proof. 
\end{proof}

\begin{thm}\label{th:quasi-h-even}
    Assume $m$ is large even.  The algebra $\Sl^n(m)$ is quasi-hereditary if and only if $f_n(Q,q) \neq 0$.
\end{thm}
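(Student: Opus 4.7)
The plan is to observe that this theorem is much cleaner than its odd counterpart, since the "if" direction is already handled by \cref{lem:f-neq}, and the "only if" direction can be proved directly using the two corner bipartitions $((1^n),\emptyset)$ and $((n),\emptyset)$, without needing any crystal graph induction.

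First I would dispense with the forward direction: if $f_n(Q,q)\neq 0$, then \cref{lem:f-neq} immediately gives that $\Sl^n(m)$ is quasi-hereditary. So the entire task reduces to showing that if $f_n(Q,q)=0$, then $\Sl^n(m)$ fails to be quasi-hereditary for $m$ large even. By \cref{cor:bipartitions}, it suffices to exhibit a single bipartition of $n$ that is not \LNX{e}.

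Suppose $f_n(Q,q)=0$, so that $Q=-q^k$ for some $k\in[1-n,n-1]$. I would split into two cases according to the sign of $k$. If $k\in[0,n-1]$, I would apply \cref{formula-even}(1), which states that $((1^n),\emptyset)$ is \LNX{e} if and only if $\prod_{i=1}^n(Q+q^{i-1})\neq 0$; the factor with $i=k+1\in[1,n]$ vanishes, so this bipartition is not \LNX{e}. If instead $k\in[1-n,0]$, I would apply \cref{formula-even}(2), which states that $((n),\emptyset)$ is \LNX{e} if and only if $\prod_{i=1}^n(Q+q^{-i+1})\neq 0$; the factor with $i=1-k\in[1,n]$ vanishes, so this bipartition is not \LNX{e}. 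Since every $k\in[1-n,n-1]$ falls into one of these two cases, we always find a non-\LNX{e} bipartition, and \cref{cor:bipartitions} completes the proof.

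The main substantive point, as opposed to an obstacle, is simply the observation that for even $m$, the bipartition $((n),\emptyset)$ lies outside $\LnmB$ (which in the even case forces the first component to be trivial), so unlike in the odd case where $((n),\emptyset)\in\LnmB$ and is automatically \LNX{o} by \cref{lem:LB-LNX}, here it provides a genuine obstruction. This witness bipartition exactly covers the factors $(Q+q^i)$ with $i\in[1-n,1-\lfloor n/2\rfloor]$ that were absent from $\pol{n}(Q,q)$ in the odd case, which is why the unmodified conjecture of Lai--Nakano--Xiang holds in the even case whereas it needs correction in the odd case.
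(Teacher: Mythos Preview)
Your proof is correct. Both you and the paper dispatch the ``if'' direction via \cref{lem:f-neq}, but for the ``only if'' direction you take a more direct route than the paper. The paper's argument exhibits a small non-\LNX{e} bipartition---a single column when $k\geq 0$ or a single row when $k<0$, of size roughly $|k|$---and then invokes the closure of the set of non-\LNX{e} bipartitions under Kashiwara operators (\cref{LNXcl}) to produce a non-\LNX{e} bipartition of size $n$. You instead work directly at rank $n$ with $((1^n),\emptyset)$ and $((n),\emptyset)$, reading the vanishing factor straight from \cref{formula-even}(1) and (2), so that \cref{cor:bipartitions} finishes the argument with no crystal input at all. Your approach is more elementary and self-contained; the paper's approach has the minor advantage of locating the smallest rank at which the obstruction first appears, which ties in with the subsequent block-level analysis in \cref{prop:blocks-qh}. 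Your closing observation about why $((n),\emptyset)$ is a genuine witness in the even case but not the odd case (since for $m$ even the first component of any element of $\LnmB$ is forced to be trivial) is a nice explanation of the asymmetry between \cref{th:quasi-h} and \cref{th:quasi-h-even}.
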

\begin{proof}
    The ``if'' direction is precisely \cite[Cor. 6.1.1]{LNX}, so we need only prove the ``only if'' direction.  

    If $f_n(Q,q)=0$, then we have $Q=-q^k$ for some $k\in [-n+1,n-1]$.  In this case, if $k\geq 0$, the bipartition $((1^k),\emptyset)$ is not  \LNX{e} by \cref{formula-even}(1).  The crystal graph component of this element contains a partition of size $n$, so $\Sl^n(m)$ is not quasi-hereditary.   If $k<0$, a similar argument follows with $((-k),\emptyset)$, using \cref{formula-even}(2).
\end{proof}

\begin{proof}[Proof of \cref{th:B}]
The equivalence of (1), (2), and (3) follows from \cref{cor:corner-algebra}.  The equivalence of (2) and (4) is precisely \cref{th:quasi-h,th:quasi-h-even}.  
\end{proof}

\begin{prop}\label{B-even}
    If $q$ is not a root of unity of order $\leq n$, 
    \begin{enumerate}
    \item the \LNX{o} bipartitions are those in the crystal graph component of a bipartition of the form $((m),\emptyset)$ for some $m$.  
    \item  the \LNX{e} bipartitions are precisely the Kleshchev bipartitions, that is, the crystal graph component of $(\emptyset,\emptyset)$. 
    \end{enumerate}
\end{prop}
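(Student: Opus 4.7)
The plan is to combine the closure of the \LNX{o} and \LNX{e} sets under Kashiwara operators (\cref{LNXcl}) with a case analysis of highest weight bipartitions, patterned on the end of the proof of \cref{th:quasi-h}.

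One direction is immediate: each bipartition of the form $((a),\emptyset)$ lies in $\LnmB$ for $m$ large odd, hence is \LNX{o} by \cref{lem:LB-LNX}, while $(\emptyset,\emptyset)\in\LnmB$ is \LNX{e} for $m$ large even. Closure under Kashiwara operators then shows that their full crystal components consist of \LNX{o} (resp.\ \LNX{e}) bipartitions. For the converse, since every crystal component has a unique highest weight element (killed by all $\tilde e_u$), it suffices to identify which highest weight bipartitions can be \LNX{o} or \LNX{e}, and verify that these are exactly $\{((a),\emptyset):a\geq 0\}$ in the odd case and $\{(\emptyset,\emptyset)\}$ in the even case.

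Let $\lambda=(\lambda^{(1)},\lambda^{(2)})$ be a highest weight bipartition. I first dispose of \emph{Case A}: suppose $\lambda^{(2)}\neq\emptyset$, and consider the removable box $b=(r,c,2)$ with $r=\ell(\lambda^{(2)})$ and $c=\lambda^{(2)}_r$, of residue $u=-q^{c-r}$. For the parenthesis of $b$ to cancel, one needs a strictly lower addable box of residue $u$; first-component addable boxes are higher in the order, and within $\lambda^{(2)}$ the partition inequality $\lambda^{(2)}_{r''}\geq\lambda^{(2)}_r$ for $r''\leq r$ forces the only lower candidate to be $(r+1,1,2)$, of residue $-q^{-r}$. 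Matching residues requires $q^c=1$ with $1\leq c\leq n$, which is impossible by the hypothesis that $q$ is not a root of unity of order $\leq n$. So $b$ is uncanceled, contradicting highest weight.

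\emph{Case B}: $\lambda=(\mu,\emptyset)$. Consider $b'=(r',\mu_{r'},1)$ with $r'=\ell(\mu)$. Within $\lambda^{(1)}$, the only lower addable candidate is $(r'+1,1,1)$, whose residue matches $b'$'s only if $q^{\mu_{r'}}=1$, again impossible. The only remaining cancellation candidate is the addable box $(1,1,2)\in\lambda^{(2)}=\emptyset$ of residue $-1$; if it does not match then $b'$ is uncanceled and $\lambda$ is not highest weight, while if it does match then $Q=-q^{r'-\mu_{r'}}$ and the residue of $b'$ is itself $-1$. In the odd case with $r'\geq 2$, the box $b'=(r',\mu_{r'},1)$ with $r'>1$ of residue $-1$ violates the \LNX{o} criterion of \cref{formula}; in the even case, any $r'\geq 1$ gives a box of residue $-1$ in $\lambda^{(1)}$, violating the \LNX{e} criterion of \cref{formula-even}. (If $Q\notin -q^{\mathbb Z}$, the residues in $\lambda^{(1)}$ and $\lambda^{(2)}$ lie in disjoint subsets of $U$, so the $(1,1,2)$-cancellation is automatically ruled out and the same conclusion follows.)

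Combining the cases: a highest weight \LNX{o} bipartition must have $\lambda^{(2)}=\emptyset$ and $\ell(\lambda^{(1)})\leq 1$, giving $\lambda=((a),\emptyset)$ for some $a\geq 0$; a highest weight \LNX{e} bipartition must additionally have $\lambda^{(1)}=\emptyset$, giving $\lambda=(\emptyset,\emptyset)$. The main technical work will be the residue analysis in Cases A and B, where I expect the crux to be verifying that the congruences $q^c=1$ and $q^{\mu_{r'}}=1$ admit no nontrivial solutions for $c,\mu_{r'}\in\{1,\dots,n\}$; this is precisely where the hypothesis on the multiplicative order of $q$ enters decisively, and a careful check is required that no other corner of $\mu$ can supply an alternate cancellation (which follows cleanly from the weakly decreasing partition inequality together with the ordering convention on $\mathcal{AR}_u$).
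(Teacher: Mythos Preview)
Your proposal is correct and follows essentially the same approach as the paper's proof: both use \cref{LNXcl} and \cref{lem:LB-LNX} for one direction, and for the converse reduce to analyzing highest weight bipartitions, first forcing $\lambda^{(2)}=\emptyset$ via the root-of-unity hypothesis, then using \cref{formula} and \cref{formula-even} to bound the number of rows in $\lambda^{(1)}$. The only cosmetic difference is that the paper argues every removable box of $\mu^{(1)}$ has residue $-1$ (forcing a rectangle), whereas you work directly with the bottom-most removable box; your version is slightly more streamlined and avoids the rectangle detour.
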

\begin{proof}
{\bf ``If'' direction}: Recall that $((m),\emptyset)$ is always \LNX{o} (\cref{lem:LB-LNX}), so the closure under crystal operations (\cref{LNXcl}) implies that if $(\lambda^{(1)},\lambda^{(2)})$ is in the crystal graph component of $((m),\emptyset)$, it is \LNX{o}.  This proves the ``if'' direction of (1).  The same argument establishes the ``if'' direction of (2) since $(\emptyset,\emptyset)$ is \LNX{e}.

{\bf ``Only if'' direction:}  Now, assume that $\lambda$ is \LNX{o}.  
As $q$ is not a root of unity of order $\leq n$,  there is at most one addable or removable box of a given residue in $\la^{(2)}$.  Thus, any removable box in $\la^{(2)}$ can be removed by the Kashiwara operator of the corresponding residue.  It follows that if $\lambda$ is \LNX{o}, the highest weight object in the component of $\lambda$ must be of the form $\mu=(\mu^{(1)},\emptyset)$.  Note that if $\mu$ has any removable box, it must be of residue $-1$, since any other removable box is not canceled by an addable box in $\emptyset$, contradicting the highest weight hypothesis.  Thus, we must have $\mu=((k+r)^r,\emptyset)$ for $r\geq \max(0,-k)$ where $Q=-q^k$.  If $r>1$, then the removable box $(k+r,r)$ is not in the first row, so by  \cref{formula}, the bipartition $\mu$ is not \LNX{o}, showing that we must have $r\leq 1$, as desired.  This completes the proof of statement (1).

On the other hand, if we assume that $\la$ is \LNX{e}, then \cref{formula-even} shows that $\mu=(\emptyset,\emptyset)$, since any box would show it was not \LNX{e}.  
\end{proof}

If $q$ is a root of unity of order $e\leq n$, then \cref{B-even} is manifestly false as stated: The component of the crystal graph component containing $(\emptyset,(e))$ consists of  \LNX{o} and \LNX{e} bipartitions, but does not contain any of the form $(\lambda, \emptyset)$.  As mentioned in the introduction, we expect that this can be repaired by adding Heisenberg crystal operators as defined in \cite{losevSupportsSimple2021}.

It's natural to wonder how these results extend to smaller values of $m$.  As our discussion at the end of Section \ref{sec:counterexample} shows, $\Sl^n(m)$ can be quasi-hereditary for $m<2n+1$ even if the condition (4) of \cref{th:B} holds.  Since we must have $r>n$ for the cell module $\Csl{((1^n),\emptyset)}$ to be non-zero over $\Sl^n(2r-1)$, one natural guess is that:
\begin{conj} 
The algebra $\Sl^n(2r-1)$ is quasi-hereditary if and only  $Q\neq -q^k$ for all $k$ satisfying $\frac{4-n}{2}\leq k<\min(n,r)$.
\end{conj}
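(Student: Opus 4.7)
By the analog of \cref{cor:corner-algebra} applied to the isomorphism $\Sl^n(2r-1)\cong \e\Sd^n(\Lambda_{n,2r-1})\e$ from \cref{th:A}, the conjecture reduces to the statement that every $\lambda\in\Lambda_{n,2r-1}^+$ is \LNX{2r-1} precisely when $Q\ne -q^k$ in the asserted range. A key preliminary observation is that the inner product formula \cref{formula} applies verbatim to $\Sl^n(2r-1)$ whenever the partition $\lambda^{(1)}$ has at most $r$ rows, because the generating tableau $T$ of \cref{lem:T-factor} uses only the alphabet $\{0,1,\dots,\ell-1\}$ which is available inside $\{0,1,\dots,r-1\}$ precisely when $\ell\le r$.

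For the ``only if'' direction I would exhibit, for each bad $k$, an explicit bipartition in $\Lambda_{n,2r-1}^+$ witnessing failure. For $k\in[1,\min(n,r)-1]$ the choice is $((1^n),\emptyset)$ when $r\ge n$ or the hook $((n-r+1,1^{r-1}),\emptyset)$ when $r<n$; in either case the hook has at most $r$ rows, lies in $\Lambda_{n,2r-1}^+$, and \cref{formula} shows its cell-module form vanishes precisely at the stated $k$. For $k\in[(4-n)/2,0]$ the bipartition $((\lceil n/2\rceil,\lfloor n/2\rfloor),\emptyset)$ has two rows and so lies in $\Lambda_{n,2r-1}^+$ for $r\ge 2$, and its residue factors cover the negative half of the bad range. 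For the ``if'' direction my plan is to view $\Sl^n(2r-1)=\emm{2r-1}{2n+1}\Sl^n(2n+1)\emm{2r-1}{2n+1}$ and split into two cases. If $Q\ne -q^k$ throughout $[(4-n)/2,n-1]$ then $\Sl^n(2n+1)$ is quasi-hereditary by \cref{th:quasi-h}, every cell module $\Csl{\lambda}$ already has non-vanishing form, and for $\lambda\in\Lambda_{n,2r-1}^+$ the generating tableau lives in the restricted alphabet so truncation by $\emm{2r-1}{2n+1}$ preserves non-vanishing. Otherwise $Q=-q^k$ for some $k\in[r,n-1]$; here, by \cref{formula}, any offending $(\mu^{(1)},\emptyset)$ must contain a box $(a,b,1)$ with $a-b=k\ge r$ and $a\ge 2$, forcing $\mu^{(1)}$ to have at least $r+1$ rows and hence $\mu\notin\Lambda_{n,2r-1}^+$.

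The main obstacle is the second case for mixed bipartitions $\lambda$ with $\lambda^{(2)}\ne\emptyset$, where \cref{formula} does not apply directly. To handle these I would mimic the crystal reduction in the proof of \cref{th:quasi-h}: by \cref{LNXcl} both \LNX{2r-1}-ness and its negation are closed under Kashiwara operators, so one applies a well-chosen $\tilde{e}_u$ (removing a box of $\lambda^{(2)}$) to reduce to smaller $|\lambda^{(2)}|$, iterating until either the component meets $\LnmB$ or one lands on a bipartition of the form $(\mu^{(1)},\emptyset)$ controlled by \cref{formula}. The technical crux, and the step I expect to be the hardest, is verifying that this crystal reduction stays inside the row bound $\ell(\lambda^{(1)})\le r$ inherited from $\Lambda_{n,2r-1}^+$, so that the row-count argument still rules out the bad residue; this will likely require a case analysis in the spirit of the highest-weight argument of \cref{B-even}, adapted to track the interaction between crystal operators and the truncation imposed by finite $r$.
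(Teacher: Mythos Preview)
The statement you are attempting to prove is stated in the paper as a \emph{conjecture}, not a theorem: the paper offers no proof at all, only the heuristic motivation that $\Csl{((1^n),\emptyset)}=0$ once $r<n$. So there is nothing in the paper to compare your argument against, and you should be aware that you are attempting to settle an open problem, not to reconstruct a known proof.

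Your proposal also has a genuine gap in the ``if'' direction. You invoke \cref{LNXcl} to claim that ``both \LNX{2r-1}-ness and its negation are closed under Kashiwara operators,'' but \cref{LNXcl} is stated only for \LNX{o} and \LNX{e}, and the paper explicitly warns immediately afterward: ``this does not mean that the \LNX{m} bipartitions for any fixed value of $m$ are closed under crystal operators; this is not, in fact, the case.'' The reason is structural: the induction and restriction functors of Section~5 link $\Sl^{n}(m')$ with $\Sl^{n\pm 1}(m'\pm 2)$, so the crystal transports \LNX{m'(n)} with $m'$ varying alongside $n$, not \LNX{2r-1} for a fixed $r$. Consequently, your plan to reduce mixed bipartitions by repeatedly applying $\tilde{e}_u$ while staying inside the \LNX{2r-1} condition does not go through, and this is precisely the obstacle that keeps the small-$m$ case open. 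A secondary imprecision: the reduction via \cref{cor:corner-algebra} requires $n_c(\Sl^n(2r-1))=n_c(\Sd^n(\Lambda_{n,2r-1}))$, which fails for $r<n$ (see the Remark following \cref{cor:bipartitions}); the correct set to test is not $\Lambda_{n,2r-1}^+$ but the bipartitions whose cell modules $\Csl{\lambda}$ are nonzero, i.e.\ those with at most $r$ parts in total.
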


\subsection{Quasi-hereditarity for blocks}

We'll assume throughout this subsection that $m$ is large.  In this case, we can compute the block structure of $\Sl^n(m)$ by comparison with that of $\Sd^n$ and $\Hh_n$.  In any cellular algebra, every cell module lives in a single block (even though in some cases they are not indecomposable). The block of a cell module over $\Sd^n$ or over $\Hh_n$ is determined by the function $b_\lambda\colon \K\to \Z_{\geq 0}$ sending $u\in \K$ to the number of boxes $(r,c,\ell)$ in the diagram of $\la$ such that $\operatorname{res}(r,c,\ell)=u$ (where the residue function is defined in \cref{content-def}).  
Two bipartitions with the same function $b_{\la}$ are called {\bf residue equivalent}.

By \cite[Th. 2.11]{Lyle2007}, we can write $\Sd^n(\Lnm)\cong \bigoplus_{b}\Sd(b)$ as a sum of block algebras corresponding to the functions $b$ of the form $b_{\la}$ for at least one bipartition $\la$.  We have an associated decomposition $\Sl^n(m)\cong \oplus_{b}\Sl(b)$ where $\Sl(b)=\e\Sd(b)\e$.   
\begin{prop}\label{prop:Sl-blocks}
The algebras $\Sl(b)$ are indecomposable, that is, they are the block algebras of $\Sl^n(m)$.  Consequently, blocks of $\Sd^n,\Sl^n(m)$ and $\Hh_n$ are in canonical bijection.
\end{prop}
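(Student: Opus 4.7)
The plan is to deduce indecomposability of each non-zero $\Sl(b)$ by transporting the known block structure of $\Hh_n$ through the Schur functor $\Sl^n(m)\mmod\to\Hh_n\mmod$ of \cref{th:A}, using cellularity to link simples via shared composition factors of cell modules. The decomposition $\Sl^n(m)=\bigoplus_b \Sl(b)$ already forces each block of $\Sl^n(m)$ to lie inside a single $\Sl(b)$, so it remains to show that each non-zero $\Sl(b)$ is indecomposable.  The blocks of $\Sd^n(\Lnm)$ and of $\Hh_n$ are both indexed by residue classes (the former by \cite{Lyle2007}), and every class realized by a bipartition $\lambda$ of $n$ contains at least one Kleshchev bipartition, since the Specht module $\eh{n}\Csd{\lambda}$ is non-zero and hence has a simple composition factor $\Sh{\mu}$ with $\mu$ Kleshchev in the same class as $\lambda$.

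The essential input is that, for $m$ large, the idempotent $\eh{n}=\varphi_{(\emptyset,(1^n))}$ lies beneath $\e$ (using the identification of $(\emptyset,(1^n))$ with $((0),(1^n))\in\LnmB$ when $m$ is odd), so that the Schur functor $\Omega_n$ of \cref{th:A} is realized on $\Sl^n(m)\mmod$ by the exact functor $M\mapsto M\eh{n}$.  This sends the cell module $\Csl{\lambda}=\e\Csd{\lambda}$ to the Specht module $\Csd{\lambda}\eh{n}$ and sends the simple $\Ssl{\lambda}$ to $\Sh{\lambda}$ when $\lambda$ is Kleshchev and to $0$ otherwise.  It follows that the Kleshchev composition factors of any cell module $\Csl{\sigma}$ are in multiplicity-preserving bijection with the composition factors of its Schur image.

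With this in hand, I would link all simples of $\Sl(b)$ in two steps.  First, given two Kleshchev bipartitions $\mu_1,\mu_2$ in class $b$, the indecomposability of the Hecke block $\Hh(b)$ supplies a chain $\mu_1=\nu_0,\dotsc,\nu_k=\mu_2$ of Kleshchev bipartitions in $b$ together with bipartitions $\sigma_i$ such that $\Sh{\nu_i}$ and $\Sh{\nu_{i+1}}$ are both composition factors of the Specht module indexed by $\sigma_i$; the bijection above lifts this to show that $\Ssl{\nu_i}$ and $\Ssl{\nu_{i+1}}$ appear as composition factors of $\Csl{\sigma_i}$, placing all Kleshchev simples in class $b$ into a single block of $\Sl^n(m)$.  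Second, for any \LNX{m} non-Kleshchev $\lambda$ in class $b$, the cell module $\Csl{\lambda}$ has simple head $\Ssl{\lambda}$ and its Schur image is a non-zero Specht module, which must have some composition factor $\Sh{\mu}$ with $\mu$ Kleshchev in class $b$; lifting gives $\Ssl{\mu}$ as a composition factor of $\Csl{\lambda}$, linking $\Ssl{\lambda}$ to the Kleshchev block from the first step.

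The main obstacle I expect is careful bookkeeping around the Schur functor factorization: one must verify that $\eh{n}\leq\e$ for both parities of $m$ large, and that the factored functor in \cref{th:A} is literally $M\mapsto M\eh{n}$, so that exactness and the cell/Specht compatibility follow automatically.  With this calibration in place, the two linking arguments, combined with the residue-class indexing of the blocks of $\Sd^n(\Lnm)$ and $\Hh_n$, yield the claimed canonical bijection of blocks among all three algebras.
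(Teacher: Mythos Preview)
Your proposal is correct and follows essentially the same strategy as the paper's proof: both reduce to the known block structure of $\Hh_n$, use the cell-linkage criterion of Graham--Lehrer (cited there as \cite[(3.9.8)]{grahamCellularAlgebras1996}) to produce a chain of Specht modules linking any two bipartitions in a given residue class through shared Kleshchev composition factors, and then lift this chain to $\Sl^n(m)$ via the exactness of multiplication by $\eh{n}$.  The only organizational difference is that the paper applies the chain argument directly between any two \LNX{m} bipartitions $\lambda,\lambda'$ in one step, whereas you split into ``link all Kleshchev simples'' and ``link each non-Kleshchev \LNX{m} simple to a Kleshchev one''; this is a cosmetic distinction, and your bookkeeping remark about $\eh{n}\leq \e$ is handled in the paper in \S3.1.
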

\begin{proof}
    The proof is effectively the same as the first half of \cite[Prop. 2.3]{Lyle2007}: if we have two simple $\Sd(b)$ modules $\Ssl{\lambda}$ and $\Ssl{\la'}$, then the cell modules $\Ch{\lambda}$ and $\Ch{\la'}$ must lie in the same block of $\Hh_n$-modules.  By \cite[(3.9.8)]{grahamCellularAlgebras1996}, we must have a chain of bipartitions $\xi=\xi_0,\xi_1,\dots, \xi_d=\la'$ and Kleshchev bipartitions $\nu_1,\dots, \nu_n$ such that $\Sh{\nu_i}$ is a composition factor in both $\Ch{\xi_{i-1}}$ and $\Ch{\xi_{i}}$.  By the exactness of multiplying by $\eh{n}$, it is likewise true that $\Ssl{\nu_i}$ is a composition factor of  $\Csl{\xi_{i-1}}$ and $\Csl{\xi_{i}}$.  Thus,  $\Ssl{\lambda}$ and $\Ssl{\la'}$ must lie in the same block.
\end{proof}

We can now turn to the question of whether these block algebras of $\Sl^n(m)$ are quasi-hereditary.  By \cref{lem:corner-cell}, the block algebra $\Sl(b)$ will be cellular and by \cref{cor:corner-algebra}, it will be quasi-hereditary if and only the idempotent $\e$ induces a Morita equivalence to $\Sd(b)$, that is, if
every bipartition $\la$ satisfying $b_{\la}=b$ is \LNX{m}.  

In order to attack this problem, it is useful to use the Weyl group action on blocks.  For each function $b$, we have the statistic $\al_u^{\vee}(b)=b(qu)+b(q^{-1}u)-2b(u)+\delta_{u,-1}+\delta_{u,Q}$, given by the number of addable boxes of residue $u$ minus the number of removable boxes of residue $u$ for any bipartition whose cell module lies in this block.  If $k=\al_i^{\vee}(b)<0$, then the iterated Kashiwara operator $\tilde{e}_i^k$ gives a bijection between bipartitions with residue function $b$ and those with residue function $b'(u)=b(u)-k\delta_{u,i}$.  Of course, we have $\al_i^{\vee}(b')=-k$; in fact, $b$ and $b'$ correspond to weight spaces of $\mathfrak{g}_U$ conjugate under the Weyl group of this Lie algebra.  Furthermore, the block algebra $\Sl(b)$ is quasi-hereditary if and only if $\Sl(b')$ is, since this bijection sends \LNX{o} bipartitions to \LNX{o} bipartitions (and similarly with $m$ even).  This process reduces the number of boxes in the bipartitions at each step, and so must terminate.  Therefore, we can assume that $\al_u^{\vee}(b)\geq 0$ for all $u\in U$, i.e. that $b$ corresponds to a dominant weight of $\mathfrak{g}_U$.  In this case, we call the block algebra $\Sl(b)$ {\bf dominant}.

We'll use a standard consequence of the convexity of weight diagrams for representations of $\mathfrak{g}_U$ (see \cite[Lem. 3.6]{websterRoCKBlocks2023}) for any subcrystal $\mathfrak{C}$ inside the set of all bipartitions. 
\begin{lem}\label{lem:add-hooks}
    Assume $b'$ is dominant and that $b'(u)\geq b(u)$ for all $u$. If $\mathfrak{C}$ contains a bipartition $\la$ with $b_\la=b$, then there is a bipartition $\mu\in \mathfrak{C}$ with $b_{\mu}=b'$ which contains $\la$ in its diagram.
 \end{lem}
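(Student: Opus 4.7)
The plan is to recast the claim in crystal-theoretic language and then reduce it to the statement of \cite[Lem.~3.6]{websterRoCKBlocks2023}, a general result about integrable highest weight representations of $\mathfrak{g}_U$.  Observe first that a bipartition $\mu \in \mathfrak{C}$ contains $\lambda$ as a sub-diagram if and only if $\mu$ is obtained from $\lambda$ by applying a sequence of Kashiwara operators $\tilde{f}_u$, since each such operator adds a single box of residue $u$.  Thus the lemma amounts to finding an ordering of a multiset containing $b'(u) - b(u)$ copies of each $u \in U$ whose corresponding composition of $\tilde{f}_u$'s is nonzero on $\lambda$.

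To invoke the general result, I would translate to the weight lattice of $\mathfrak{g}_U$.  Each residue function $b$ corresponds to a weight: if $\Lambda$ is the dominant weight of the irreducible crystal component containing $\lambda$ and $b_\Lambda$ is the residue function of its highest-weight vertex, then $\lambda$ sits at weight $w(\lambda) = \Lambda - \sum_u (b(u) - b_\Lambda(u)) \alpha_u$.  A direct computation against the Cartan matrix of $\mathfrak{g}_U$ shows that the quantity $\alpha_u^\vee(b)$ defined earlier equals the pairing $\langle w(\lambda),\alpha_u^\vee\rangle$, so the dominance hypothesis on $b'$ translates to dominance of the prospective target weight $w_\mu = w(\lambda) - \sum_u (b'(u) - b(u)) \alpha_u$.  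Since $b' \geq b \geq b_\Lambda$ componentwise, $w_\mu$ lies below $\Lambda$ in the root order, and the saturation of the weight support of an integrable highest weight module guarantees that $w_\mu$ is indeed a weight of the irreducible representation with highest weight $\Lambda$.

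With these translations in place, the existence of a sequence of $\tilde{f}_u$'s from $\lambda$ to some $\mu$ of weight $w_\mu$ is exactly the content of the cited lemma.  The main obstacle, dispensed with in the reference, is the inductive step of showing that at each stage one can find some $u$ for which $b'(u) - b(u)$ still has positive residual and for which $\tilde{f}_u$ does not yet annihilate the current bipartition.  An individual $\tilde{f}_u$ may indeed annihilate, as happens when the boxes to be added form a null root (for instance, an $e$-hook in the $\slehat$ case), but dominance of $b'$ ensures, via a convexity argument in the weight lattice, that a workable order always exists.
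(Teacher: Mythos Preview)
Your strategy is sound and is precisely the route the paper alludes to before opting for a direct argument: the paper writes ``While this follows from convexity, it's simple to give a `hands-on' proof by induction'' and then spells out the inductive step you outsource to \cite[Lem.~3.6]{websterRoCKBlocks2023}.  The paper's explicit argument is to find, at each stage, some $u$ with $b''(u)>0$ and $\alpha_u^\vee(b)>0$ by taking a point $v$ where $b''=b'-b$ achieves its maximum $a$ and splitting into two cases: either $b''$ is constant equal to $a$ on the orbit $q^{\mathbb Z}v$ (then use $\sum_{v'\in q^{\mathbb Z}v}\alpha_{v'}^\vee(b)>0$), or one can arrange $b''(q^{-1}v)<a$, in which case $\alpha_v^\vee(b)>\alpha_v^\vee(b')\geq 0$.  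This is exactly the convexity step you invoke, made concrete for this combinatorial model.  Your version is shorter if one accepts the citation; the paper's is self-contained.

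One correction: your opening ``if and only if'' overclaims.  The ``if'' direction---that applying a sequence of $\tilde f_u$'s to $\lambda$ produces a bipartition containing $\lambda$---is immediate and is the only direction you actually use.  The ``only if'' fails: $\mathfrak C$ may have several components, and any bipartition $\mu$ in a component different from that of $\lambda$ is unreachable from $\lambda$ by Kashiwara operators, yet one can easily have $\lambda\subset\mu$ (take $\lambda=(\emptyset,\emptyset)$ and $\mu$ any nontrivial highest-weight bipartition).  Even within a single component the claim is not obvious.  This does not damage your argument, since the lemma only asks for \emph{some} $\mu\in\mathfrak C$ with $b_\mu=b'$ containing $\lambda$, and the one produced by your sequence of $\tilde f_u$'s does the job; but the biconditional as stated is false and should be weakened to the implication you need.
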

 \begin{proof}
     While this follows from convexity, it's simple to give a ``hands-on'' proof by induction on the difference between the number of boxes.  If we let $b''=b'-b$, we can write this difference as $\sum_{u\in U}b''(u).$  The strategy of the proof is to show that there is some $u\in U$ where $b''(u)>0$ and $\al_u^{\vee}(b)>0$.  In this case, we have more addable boxes than removable boxes with residue $u$, so the Kashiwara operator $\la'=f_u(\lambda)$ gives a bipartition in $\mathfrak{C}$ whose diagram contains that of $\la$, and still satisfies $b_{\la'}(u)\leq b'(u)$.  By induction, there is a bipartition $\mu\in \mathfrak{C}$ with $b_\mu=b'$ containing the diagram of $\la'$, and thus that of $\la$.
     
     {\bf Existence of  $u\in U$ where $b''(u)>0$ and $\al_u^{\vee}(b)>0$:} Let $a$ be the maximum value of the function $b''$ on $U$, and $v$ a point where it is obtained.  Either (1) $b''(q^kv)=a$ for all $k\in\Z$, or (2) we can assume that $b''(q^{-1}v)<a$ and $b''(qv)\leq a$.  
     
     In case (1), the set $q^{\Z}v$ must be finite and we can consider the sum $\ell=\sum_{v'\in q^{\Z}v}\al_{v'}^{\vee}(b)$.  This is the difference of the total number of addable and the total number of removable boxes in the components of the partition whose residues lie in $q^{\Z}v$.  Thus $\ell$ is just the number of such components.  That is, we have $\ell=1$ if exactly one of $-1\in q^{\Z}v$ or $Q\in q^{\Z}v$ holds, and $\ell=2$ if both do.  Thus,  for some $v'\in q^{\Z}v$, we have $\al_{v'}^{\vee}(b)=\al_{v'}^{\vee}(b')>0$.  
     
     In case (2), we already have $\al_{v}^{\vee}(b)>\al_{v}^{\vee}(b')\geq 0$.  In either case, we can complete the proof by the induction argument discussed above.   
 \end{proof}

    
Let $K=\{k\in \Z \mid Q=-q^{k}\}\subset \Z$.\begin{prop}\label{prop:blocks-qh}
  Assume $m$ is large odd.
    A dominant block algebra $\Sl(b)$ is not quasi-hereditary if and only if one of the conditions below holds:
    \begin{enumerate}
        \item For some $k\in K$ with $k>0$, we have  $b(u)\geq b_{((1^{k-1}),\emptyset)}(u)$ for all $u\in U$ .
        \item For some $-k\in K$ with $k\geq 0$,  we have  $b(u)\geq b_{((k^2),\emptyset)}(u)$ for all $u\in U$ .
    \end{enumerate}
    By Lemma \ref{lem:add-hooks}, we can rephrase these conditions equivalently as:
        \begin{enumerate}
        \item[($1'$)] Some bipartition $\lambda$ with $b_{\lambda}=b$ contains $((1^{k-1}),\emptyset)$ as a subdiagram.
        \item[($2'$)] Some bipartition $\lambda$ with $b_{\lambda}=b$ contains $((k^2),\emptyset)$ as a subdiagram.
    \end{enumerate}
\end{prop}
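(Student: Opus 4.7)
The plan is to apply \cref{cor:corner-algebra} to the idempotent decomposition $\Sl(b)=\e\Sd(b)\e$. Since $\Sd(b)$ is a block of the quasi-hereditary algebra $\Sd^n(\Lnm)$, hence itself quasi-hereditary, and for $m$ large the idempotent $\e$ does not kill any cell module of $\Sd(b)$ (so that $n_c(\Sl(b))=n_c(\Sd(b))$), the corollary reduces the statement to: $\Sl(b)$ is quasi-hereditary if and only if every bipartition $\lambda$ with $b_\lambda=b$ is \LNX{o}. Thus the proposition becomes the assertion that block $b$ contains a non-\LNX{o} bipartition if and only if condition $(1)$ or $(2)$ holds.

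By \cref{LNXcl}, the complement $\mathfrak{B}$ of the set of \LNX{o} bipartitions is a subcrystal. For the ``if'' direction, the hook and square subdiagrams listed in $(1)$ and $(2)$ extend---by adjoining the bad box of residue $-1$ forced by $Q=-q^{\pm k}$ (namely at $(k+1,1,1)$ or $(2,k+2,1)$)---to bipartitions that are non-\LNX{o} by \cref{formula} and therefore lie in $\mathfrak{B}$. Applying \cref{lem:add-hooks} to $\mathfrak{B}$ with the dominant $b$ then produces a non-\LNX{o} bipartition $\mu$ in block $b$ containing the corresponding subdiagram, simultaneously establishing the equivalences $(1)\iff(1')$ and $(2)\iff(2')$ and the ``if'' direction.

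For the ``only if'' direction, suppose $\Sl(b)$ is not QH, so some non-\LNX{o} bipartition $\mu$ lies in block $b$. Applying Kashiwara operators $\tilde e_u$---which preserve $\mathfrak{B}$ by \cref{LNXcl}---we reach a highest-weight element $\mu_{\mathrm{hw}}\in\mathfrak{B}$ with $b_{\mu_{\mathrm{hw}}}\leq b$, since each $\tilde e_u$ removes a single box. By \cref{lem:LB-LNX}, $\mu_{\mathrm{hw}}\notin\LnmB$, so $\mu_{\mathrm{hw}}^{(1)}$ must have at least two rows. In the case $\mu_{\mathrm{hw}}^{(2)}=\emptyset$, \cref{formula} produces a bad box $(a,c,1)$ in $\mu_{\mathrm{hw}}^{(1)}$ with $a>1$ and $Q=-q^{a-c}$; since $\mu_{\mathrm{hw}}^{(1)}$ is a partition containing this box, it contains the first $a$ entries of its first column when $a>c$ (yielding the hook subdiagram of $(1')$) and contains the rectangle $[1,a]\times[1,c]$ when $a\leq c$ (yielding the square subdiagram of $(2')$).

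The principal obstacle is the remaining case $\mu_{\mathrm{hw}}^{(2)}\neq\emptyset$, where \cref{formula} does not apply directly. The plan here is to adapt the residue-cancellation argument from the proof of \cref{th:quasi-h}: the highest-weight hypothesis forces the lowest removable box $(r,c,2)$ of $\mu_{\mathrm{hw}}^{(2)}$ to be canceled by an addable box of the same residue at $(r+1,1,2)$, producing the divisibility constraint $e\mid c$ on the column index. Combined with the fact that $\mu_{\mathrm{hw}}^{(1)}$ has at least two rows---forcing specific residues at $(1,1,1)$ and $(2,1,1)$ in the first component---this will force a bad box inside $\mu_{\mathrm{hw}}^{(1)}$ itself, reducing to the case already handled. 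Carrying out this case analysis cleanly, particularly when $q$ is a root of unity of small order or when both $-1$ and $Q$ lie in $q^{\Z}$ so that residues on the two components interact, will be the main technical difficulty.
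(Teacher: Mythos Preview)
Your overall strategy coincides with the paper's: reduce via \cref{cor:corner-algebra} to the existence of a non-\LNX{o} bipartition in block $b$, pass to a crystal highest-weight element, and split into the cases $\lambda^{(2)}=\emptyset$ and $\lambda^{(2)}\ne\emptyset$.  The case $\lambda^{(2)}=\emptyset$ is handled identically.

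The real divergence is in the case $\lambda^{(2)}\ne\emptyset$ of the ``only if'' direction.  You plan to locate a bad box inside $\mu_{\mathrm{hw}}^{(1)}$ via a further cancellation analysis on the first component; you yourself flag this as the main technical difficulty, and it is not at all clear it can be completed (nothing so far rules out, say, $\mu_{\mathrm{hw}}^{(1)}$ being a two-row partition whose second row is too short to hit a residue in $K$).  The paper sidesteps the problem entirely.  Once the cancellation argument gives $e\mid c$ for the length $c$ of the last row of $\lambda^{(2)}$, that \emph{single row} already contains at least one box of every residue in $U$: the existence of a non-\LNX{o} bipartition forces $K\ne\emptyset$ via \cref{lem:f-neq}, so $U=-q^{\mathbb Z}$ has exactly $e$ elements, and a row of length $c\ge e$ in the second component visits all of them.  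Hence $b(u)\ge b_{\lambda}(u)\ge 1$ for every $u\in U$.  Now pick any $k\in K\cap[1,e]$; since $k-1<e$, the column $((1^{k-1}),\emptyset)$ has pairwise distinct residues, so $b_{((1^{k-1}),\emptyset)}$ takes only the values $0$ and $1$, and condition~(1) drops out immediately.  No analysis of $\lambda^{(1)}$ beyond knowing it exists is required.

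A smaller point on the ``if'' direction: adjoining boxes out to $(k+1,1,1)$ or $(2,k+2,1)$ strictly enlarges the residue function, so the hypothesis $b\ge b_{((1^{k-1}),\emptyset)}$ does not by itself give $b\ge b_{\text{(extended shape)}}$, and \cref{lem:add-hooks} cannot be applied to the extended shape inside $\mathfrak B$ without further argument.  The paper instead applies \cref{lem:add-hooks} directly to the unextended shape, taking $\mathfrak C$ to be its crystal-graph component rather than~$\mathfrak B$.
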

Note that the dominant condition is necessary here.  There will be many semi-simple blocks which satisfy the conditions (1) and (2), but the only dominant semi-simple block algebra is $\Sl^0$, the rank 0 Schur algebra (which corresponds to the function $b=0$).

\begin{proof}
{\bf ``Only if'' direction:}    Since we are considering a dominant block, if condition (1) holds, then by Lemma \ref{lem:add-hooks}, the crystal graph component of $((1^{k-1}),\emptyset)$ contains a bipartition $\lambda$ with $b_{\lambda}(u)=b(u)$.   By \cref{formula},  all bipartitions in this crystal graph component are not \LNX{o}, so the block is not quasi-hereditary.

    Similarly, if (2) holds, then the same argument applied to the component of $((k^2),\emptyset)$ shows that the block is not quasi-hereditary.

  {\bf ``If'' direction:}  On the other hand, assume $\Sl(b)$ is not quasi-hereditary.  
  There must be a non \LNX{o} bipartition $\mu$ with $b_{\mu}=b$, whose crystal graph component has highest weight bipartition $(\la^{(1)},\la^{(2)})$.  If $\la^{(2)}=\emptyset$, then \cref{formula} shows that $\la^{(1)}$ must contain a box $(a,b)$ with $k=a-b\in K$, and $a>1$.  If $k>0$, then this shows that (1) holds and if $k\leq 0$, then (2) holds.

    Thus, we need only consider the case where $\la^{(2)}\neq \emptyset$. As argued in the proof of \cref{th:quasi-h},  we must have a minimal positive integer $e$ such that $-U$ is the $e$th roots of unity.  Furthermore, the smallest part of $\la^{(2)}$ must be of length $ae$ for $a\geq 1$, showing that $b(u)\geq a$ for all $u\in U$.  
    
    Since $Q\in U$, there must be some $k\in K\cap [1,e]$.  Since $b_{((1^{k-1}),\emptyset)}(u)$ only has values in $\{0,1\}$, this shows that condition (1) holds. 
\end{proof}
Note that the same proof easily extends to the even case:
\begin{prop}
	  Assume $m$ is large even. A dominant block algebra $\Sl(b)$ is not quasi-hereditary if and only if one of the conditions below holds:
    \begin{enumerate}
        \item For some $k\in K$ with $k>0$, we have  $b(u)\geq b_{(1^{k-1},\emptyset)}(u)$ for all $u\in U$ .
        \item For some $-k\in K$ with $k\geq 0$,  we have  $b(u)\geq b_{(k-1,\emptyset)}(u)$ for all $u\in U$.
    \end{enumerate}
\end{prop}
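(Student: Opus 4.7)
The plan is to replay the proof of \cref{prop:blocks-qh} almost verbatim, with \cref{formula-even} replacing \cref{formula}. The reduction to dominant blocks via Weyl-group-conjugation by iterated Kashiwara operators, the appeal to \cref{lem:add-hooks} to produce bipartitions with a prescribed residue function in a given crystal component, and the fact (\cref{LNXcl}) that non-\LNX{e} bipartitions form a union of crystal components all go through unchanged, since these inputs are insensitive to the parity of $m$.

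For the ``only if'' direction, suppose condition (1) holds for some $k\in K$ with $k>0$. By \cref{lem:add-hooks} applied to the crystal component of the seed bipartition $((1^{k-1}),\emptyset)$, there is a bipartition $\la$ with $b_\la=b$ in that component. Since $Q=-q^k$, \cref{formula-even} exhibits a box in the first component of the seed whose residue equals $-1$, so it is not \LNX{e}; by \cref{LNXcl} the entire component, including $\la$, consists of non-\LNX{e} bipartitions, so $\Sl(b)$ fails to be quasi-hereditary. Condition (2) is handled by the parallel argument applied to the crystal component of $((k-1),\emptyset)$ when $-k\in K$ with $k\geq 0$.

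For the ``if'' direction, assume $\Sl(b)$ is not quasi-hereditary, so there is some non-\LNX{e} bipartition $\mu$ with $b_\mu=b$, and let $\la=(\la^{(1)},\la^{(2)})$ be the highest-weight object in its crystal component; by \cref{LNXcl}, $\la$ is still non-\LNX{e}. If $\la^{(2)}=\emptyset$, then \cref{formula-even} forces $\la^{(1)}$ to contain some box $(a,b,1)$ with $a-b=k\in K$; depending on whether $k>0$ or $k\leq 0$, this puts $b_\la$ in the scope of condition (1) or condition (2), respectively. If instead $\la^{(2)}\neq\emptyset$, I replay the root-of-unity argument from the proof of \cref{prop:blocks-qh}: the highest-weight hypothesis forces $q$ to be a primitive $e$-th root of unity for some $e$, and the smallest part of $\la^{(2)}$ must then have length a positive multiple of $e$, which yields $b(u)\geq 1$ for all $u\in U$. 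Since $Q\in U$ and $Q=-q^k$ for some $k\in K$, we can choose $k\in K\cap[1,e]$, and this produces condition (1).

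The only substantive difference from the odd case is that \cref{formula-even} drops the restriction $a>1$ present in \cref{formula}, which shifts by one row the ``minimal bad'' seed bipartitions used in conditions (1) and (2) and makes the $\la^{(2)}=\emptyset$ subcase slightly cleaner (no first-row exception to exclude). I do not anticipate any genuine obstacle beyond this re-indexing; the main conceptual content is already contained in the proof of \cref{prop:blocks-qh}.
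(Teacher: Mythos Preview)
Your proposal is correct and matches the paper's approach exactly: the paper simply states that ``the same proof easily extends to the even case'' without giving any further details, and your plan of replaying the argument of \cref{prop:blocks-qh} with \cref{formula-even} in place of \cref{formula} is precisely that extension. Your observation that the only substantive change is the disappearance of the $a>1$ restriction from \cref{formula}, which accounts for the shifted seed bipartitions in conditions (1) and (2), is the right way to summarize the difference.
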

\section{Connection to rational Cherednik algebras}
\subsection{Background on rational Cherednik algebras}

The DJM Schur algebra $\Sd^n(\Lnm)$ for $m$ large is not the only quasi-hereditary cover of the modules over the Hecke algebra $\mathcal{H}_n$; it is a special case of a larger family of covers given by category $\cO$ over Cherednik algebras.  Let us fix some notation here, since the literature on rational Cherednik algebras has a great multiplicity of conventions for parameters.

Let $S_1$ be the set of complex reflections in
the type $B$ Weyl group $W_n$ that
switch two coordinate subspaces (and thus are conjugate to $s_1$) and $S_0$ the set which preserve the
coordinate subspaces (and thus are conjugate to $s_0$).  For each such reflection, let $\al_s$ be a
linear function vanishing on the hyperplane $\ker(s-1)$, and $\al_s^\vee$ a vector
spanning the image of $s-1$ such that $\langle \al_s^\vee,\al_s\rangle =2$.  
Let \[\omega_s(y,x)=\frac{\langle y,\al_s\rangle \langle \al_s^\vee,x\rangle }{\langle
  \al_s^\vee,\al_s\rangle }=\frac{\langle y,\al_s\rangle \langle \al_s^\vee,x\rangle }{2 }\]
 Let $T(\C^n\oplus
(\C^n)^*)$ denote the $\C$-linear tensor algebra on the vector space $\C^n\oplus
(\C^n)^*$, and $T(\C^n\oplus
(\C^n)^*)\# W_n$  its smash product (i.e. semi-direct product) with the group algebra $\C W_n$.  
\begin{defn}
	The rational Cherednik algebra is the quotient of the algebra $T(\C^n\oplus
(\C^n)^*)\# W_n$ by the relations for $y,y'\in \C^n,x,x'\in (\C^n)^*$:
\[[x,x']=[y,y']=0\]\[ [y,x]=\langle y,x\rangle
-\sum_{s\in S_1}2h\omega_s(y,x)  s-\sum_{s\in S_0} 2H\omega_s(y,x)s.\]
\end{defn}
If we let $H_1,H_0$ be the reflection hyperplanes for $s_1,s_0$, in the notation of \cite{RouqSchur} and  \cite{ginzburgCategoryRational2003}, we would write $h=h_{H_1,1}=-k_{H_1,1}, H=h_{H_0,1}=-k_{H_0,1}$.  

\begin{defn}
  Category $\cO$, which we denote $\cO^{h,H}_n$,
  is the full subcategory of modules over $\mathsf{H}$ which are
  generated by a finite dimensional subspace invariant under
  $\operatorname{Sym}(\C^n)\#\C[W_{n}]$ on which $\operatorname{Sym}(\C^n)$ acts
  nilpotently.  
\end{defn} 
The simple modules in this category are canonically labeled with representations of $W_{n}$, which in turn have a usual bijection with $\Lambda_n^+$, the bipartitions of $n$.  
By \cite[Th. 2.19]{ginzburgCategoryRational2003}, this category is highest weight, with a partial order that depends on $h,H$:
\begin{enumerate}
	\item Assign the boxes $(r,c,\ell)$ of the diagram of $\lambda$ the charged content 
\[\operatorname{con}(r,c,\ell)=	-h(c-r)+\frac{1}{2}H(\delta_{\ell,2}-\delta_{\ell,1}).\]
	\item Let $c_{\lambda}$ be the sum of these charged contents.
\end{enumerate} 
\begin{lem}
Category $\cO$ is highest weight with respect to the partial order on bipartitions where $\lambda\geq  \mu$ if there is a bijection between the boxes of the diagrams of $\lambda$ and $\mu$ such that:
\begin{enumerate}
	\item This bijection preserves $(Q,q)$-residue of boxes.
	\item If the box $(r,c,\ell)$ in $\la$ corresponds to $(r',c',\ell')$ in $\mu$, then $\operatorname{con}(r,c,\ell)-\operatorname{con}(r',c',\ell')\geq 0$ with equality only if $(r,c,\ell)=(r',c',\ell')$.   
\end{enumerate}
\end{lem}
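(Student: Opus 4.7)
The plan is to deduce this from the standard result of Ginzburg--Guay--Opdam--Rouquier \cite[Th. 2.19]{ginzburgCategoryRational2003} that $\cO^n_{h,H}$ is a highest weight category with respect to the partial order in which $\la > \mu$ iff $\la$ and $\mu$ lie in the same block of $\cO$ and $c_\la - c_\mu \in \Z_{>0}$. It suffices to check that the relation on bipartitions defined in the statement is a partial order refining this GGOR order, because refining a highest weight partial order produces another highest weight structure on the same category with the same standard and costandard modules.

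First, I would verify that the relation is a partial order. Reflexivity uses the identity bijection. Transitivity is obtained by composing two witnessing bijections and summing the pointwise inequalities in (2); residue preservation composes trivially. For antisymmetry, suppose $\la\geq \mu$ via $\phi_1$ and $\mu\geq \la$ via $\phi_2$. Summing condition (2) over all boxes yields $c_\la \geq c_\mu$ and $c_\mu \geq c_\la$, so each individual inequality is forced to be an equality. The equality clause in (2) then gives $\phi_1(b)=b$ for every box $b$ of $\la$, which identifies the boxes of $\la$ with those of $\mu$ and so forces $\la=\mu$.

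Second, I would show that the combinatorial order refines the GGOR order. If $\la > \mu$ combinatorially via a bijection $\phi$, summing (2) produces $c_\la - c_\mu > 0$ strictly, since otherwise all inequalities would be equalities and the equality clause would force $\la = \mu$. Moreover, $\la$ and $\mu$ have identical multisets of $(Q,q)$-residues of boxes (condition (1)), which coincides with the data of the central character of the corresponding simple in $\cO$; hence the two bipartitions lie in the same block. In any one block the $c$-function takes values in a single coset of $\Z$, so $c_\la - c_\mu \in \Z_{>0}$, which gives $\la > \mu$ in the GGOR order.

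The main technical obstacle is identifying the blocks of $\cO^n_{h,H}$ with residue-equivalence classes of bipartitions. This follows from comparing central characters (tracked by the Dunkl--Opdam subalgebra, whose eigenvalues on the standard $\Delta(\la)$ are controlled by the charged contents $\operatorname{con}(r,c,\ell)$ of the boxes of $\la$) with the $(Q,q)$-residue function of \eqref{content-def}, using the identifications $q=e^{2\pi i h}$ and $Q=e^{2\pi i H}$: the exponential map takes the charged-content multiset of $\la$ to a function of the residue multiset, and matching residues is equivalent to matching these exponentials, hence to equality of central characters. Once this block identification is in hand, the argument above completes the proof.
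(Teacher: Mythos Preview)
There is a genuine gap: you have the direction of refinement backwards.  For a highest weight category, passing to a \emph{finer} partial order (one with more relations) is automatic, since the condition ``all other composition factors $L(\mu)$ of $\Delta(\lambda)$ satisfy $\mu<\lambda$'' becomes easier.  So to deduce the lemma formally from \cite[Th.~2.19]{ginzburgCategoryRational2003} you would need to show $\lambda>_{\mathrm{GGOR}}\mu\implies\lambda>_{\mathrm{comb}}\mu$.  What you actually prove in your second step is the converse: from $\lambda>_{\mathrm{comb}}\mu$ you deduce $c_\lambda-c_\mu>0$ and (via the block identification) $\lambda>_{\mathrm{GGOR}}\mu$.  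That shows the combinatorial order is \emph{coarser} than the GGOR order, exactly as the paper asserts, and a coarser order gives a strictly stronger highest-weight statement which cannot be obtained from GGOR by formal manipulation.  Indeed the implication you would need is simply false: any $\lambda,\mu$ with $c_\lambda-c_\mu\in\Z_{>0}$ but different $(Q,q)$-residue multisets are GGOR-comparable yet combinatorially incomparable.

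The paper closes this gap with a substantive input you do not invoke: by \cite[Th.~4.11]{WebRou}, a nonzero map $P(\mu)\to\Delta(\lambda)$ forces the existence of a cellular basis vector of shape $\lambda$ and type corresponding to $\mu$, and the combinatorics of that cellular basis show that such a vector can exist only when a residue-preserving bijection of boxes with the content inequalities (1)--(2) exists.  This directly verifies $[\Delta(\lambda):L(\mu)]\neq 0\implies\mu\leq_{\mathrm{comb}}\lambda$.  Your partial-order and block arguments are fine as far as they go, but without this cellular input (or an equivalent) the proof does not close.
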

The condition of equality of residues is equivalent to requiring that $\operatorname{con}(r,c,\ell)-\operatorname{con}(r',c',\ell')$ is an integer if $\ell=\ell'$ and in $\Z+\frac{1}{2}$ if $\ell\neq \ell'$.

Note that if $H<nh<0$, then the content of any box in $\lambda^{(1)}$ is greater than that of any in $\lambda^{(2)}$, so this order coarsens the dominance order of multi-partitions.  
\begin{proof}
The only difference from \cite[Th. 2.19]{ginzburgCategoryRational2003} is that our order is coarser than that given there.  This follows from \cite[Th. 4.11]{WebRou}: there can only be a homomorphism from the projective cover of $\Delta(\mu)$ to $\Delta(\lambda)$ if there is a cellular basis vector of shape $\lambda$ and type corresponding to $\mu$. This can only happen if (1-2) hold.    
\end{proof}

\begin{example}
Consider $n=2$; in this case, $W_2$ is the Klein 4-group, and we have 4 1-dimensional representations:
\begin{enumerate} 
	\item The trivial representation corresponds to $((2),\emptyset)$.
	\item The character $s_0\mapsto 1,s_1\mapsto -1$ corresponds to $((1,1),\emptyset)$.
	\item The character $s_0\mapsto -1,s_1\mapsto 1$ corresponds to $(\emptyset,(2))$.	
	\item The sign representation $s_0\mapsto -1,s_1\mapsto -1$ corresponds to $(\emptyset,(1,1))$.
\end{enumerate}
The highest weight structure is determined by the action of the Euler element \[\mathsf{eu}=y_1x_1+y_2x_2-h s_1-hs_0s_1s_0-Hs_0-Hs_1s_0s_1.\]  Since $x_i$ acts trivially on highest weight vectors, the $c$-function for this element is 
\[c_{((2),\emptyset)}=-2h-2H\qquad c_{((1,1),\emptyset)}=2h-2H\qquad c_{(\emptyset,(2))}=-2h +2H\qquad c_{(\emptyset,(1,1))}=2h +2H\] matching the description above.  Note that any order of these 4 representations is possible, based on different choices of $h,H$.
\end{example}

Let $q=\exp(2\pi i h)$ and $ Q=\exp(2\pi i H)$.  

\begin{thm}[\mbox{\cite[Th. 5.16]{ginzburgCategoryRational2003}}]
There is a functor $\KZ\colon \cO_{h,H}^n\to \mathcal{H}_n\mmod$ which is fully faithful on projectives.  Thus, this makes $\cO_{h,H}^n$ into a quasi-hereditary cover of $\mathcal{H}_n\mmod$.  
\end{thm}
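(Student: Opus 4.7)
The plan is to construct $\KZ$ as the monodromy representation attached to a canonical flat connection on the complement of the reflection hyperplanes, verify that the resulting braid group representation factors through $\Hh_n$, and then deduce fully faithfulness on projectives from a Serre-quotient description.

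For the construction, I would first observe that any $M\in\cO_{h,H}^n$ is coherent as an $\operatorname{Sym}(\C^n)$-module; this follows from the nilpotence condition in the definition of $\cO$ combined with the PBW-type decomposition of the rational Cherednik algebra. Inverting $\delta=\prod_s \al_s$, the Dunkl operators $y\in\C^n$ act on $M[\delta^{-1}]$ as first-order differential operators on the regular locus $(\C^n)^{\mathrm{reg}}$, giving $M[\delta^{-1}]$ the structure of a $W_n$-equivariant $\mathcal{O}$-coherent $D$-module with regular singularities along each reflection hyperplane. The Riemann--Hilbert correspondence followed by descent through $W_n$ produces a finite-dimensional representation of $\pi_1((\C^n)^{\mathrm{reg}}/W_n)$, which is the Artin braid group of type $B_n$.

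The main computation is to show this braid representation factors through $\Hh_n^B(Q,q)$. Near a generic point of a reflection hyperplane $H_s$ with transverse coordinate $t=\al_s$, the connection acquires the form $\nabla_{\partial_t}=\partial_t - \kappa_s s/t + (\text{holomorphic})$, where $\kappa_s=h$ or $H$ according to whether $s\in S_1$ or $s\in S_0$; this is an immediate consequence of the $[y,x]$ relation with its $\omega_s(y,x)$ term. The local monodromy around $t=0$ is therefore conjugate to $\exp(2\pi i \kappa_s s)$, whose eigenvalues on the $\pm 1$ eigenspaces of $s$ are $1$ and $\exp(-2\pi i \kappa_s)$. After the standard normalization of meridian generators, the image of $T_i$ for $i\geq 1$ satisfies $(T_i-q)(T_i+1)=0$ with $q=\exp(2\pi i h)$, and $T_0$ satisfies $(T_0-Q)(T_0+1)=0$ with $Q=\exp(2\pi i H)$. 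These are precisely the quadratic relations for $\Hh_n$ from \cref{sec:Hecke}, so the braid representation descends to an $\Hh_n$-module, giving the functor $\KZ$.

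Finally, I would establish fully faithfulness on projectives by realizing $\KZ$ as a Serre quotient. The kernel of $\KZ$ is the full subcategory of $\cO_{h,H}^n$ consisting of modules supported on the union of reflection hyperplanes, which is a Serre subcategory. A right adjoint $S\colon \Hh_n\mmod \to \cO_{h,H}^n$ is produced by reversing the steps above: a finite-dimensional $\Hh_n$-module yields a $W_n$-equivariant local system on $(\C^n)^{\mathrm{reg}}$, which one extends canonically across the hyperplane arrangement via Deligne's regular extension and then interprets back in $\cO$. A projective $P\in\cO_{h,H}^n$ is torsion-free with respect to $\delta$ (it has no sub- or quotient objects supported on the hyperplanes), so the adjunction unit $P\to S\KZ(P)$ is an isomorphism. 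This shows that $\Hom_{\cO}(P,Q)\to \Hom_{\Hh_n}(\KZ(P),\KZ(Q))$ is bijective for every projective $P$ and arbitrary $Q$, and combined with the highest weight structure on $\cO_{h,H}^n$ recalled earlier this is exactly what is needed to make $\cO_{h,H}^n$ a quasi-hereditary cover in Rouquier's sense. The principal technical obstacle is the local monodromy computation, particularly matching the normalization of the meridian generators to the specific conventions for $\Hh_n$ used in this paper; once this is in place, the remaining steps are formal.
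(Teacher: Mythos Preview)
The paper does not prove this statement: it is quoted as \cite[Th.~5.16]{ginzburgCategoryRational2003} and used as a black box, with no argument given in the paper itself. So there is no ``paper's own proof'' to compare against.

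That said, your sketch is essentially the original Ginzburg--Guay--Opdam--Rouquier argument: localize at the discriminant, use the Dunkl embedding to get a $W_n$-equivariant $D$-module with regular singularities, take monodromy to land in representations of the type $B$ braid group, compute the local monodromy around each reflection hyperplane to verify the Hecke quadratic relations, and then identify $\KZ$ with a Serre quotient by the hyperplane-supported modules to obtain full faithfulness on projectives. This is the right architecture and is correct in outline.

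A couple of points to be careful with if you actually flesh this out. First, the step ``a projective $P$ is torsion-free with respect to $\delta$'' is the heart of the matter and is not quite as immediate as you make it sound; in GGOR this is proved by showing that projectives (indeed all standard-filtered objects) are free over $\operatorname{Sym}((\C^n)^*)$, which uses the highest weight structure. Second, matching the eigenvalues of the local monodromy to the specific quadratic relations $(T_0-Q)(T_0+1)=0$ and $(T_i-q)(T_i+1)=0$ with $q=e^{2\pi i h}$, $Q=e^{2\pi i H}$ in the conventions of this paper requires some care with signs and normalizations of the half-monodromy generators; you flag this yourself, and it is indeed where the details are most delicate.
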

By \cite[Th. 5.5]{RouqSchur}, the category $\cO_{h,H}^n$ only depends on the values $(Q,q)$ and the $c$-function order on $\Lambda_n^+$, and in fact when this order is a coarsening of dominance order (for example, when $H<nh<0$), we will have an equivalence $\cO_{h,H}^n\cong \Sd^n(\Lnm)\mmod $ for $m$ large by \cite[Cor. 3.11]{WebRou}.

\subsection{Schur restriction}
In this section, we will define a generalized KZ-functor which lands in the modules over Schur algebra $\Sl^n(m)$ instead of the Hecke algebra $\Hh_n$.  

Assume for the remainder of this section that:
\begin{itemize}
	\item[$(\dagger)$] for all $0<r\leq n$, the multipartition $((r),\emptyset)$ is maximal in our partial order.  
\end{itemize} 
\begin{lem}
	The condition $(\dagger)$ holds if and only if $rh\notin \Z_{>0}$ and $H+(r-1)h+\frac{1}{2}\notin \Z_{>0}$ for all $0<r\leq n$.
\end{lem}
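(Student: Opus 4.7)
I would prove the biconditional by the contrapositive in each direction: $((r),\emptyset)$ fails to be maximal for some $r \in (0,n]$ exactly when one of the two arithmetic conditions fails for some $r' \leq r$. Both directions come down to carefully choosing either a whole bipartition or a single well-placed box, and reading off the conditions from the residue and content comparisons dictated by the partial order.

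\textbf{Forward direction.} To show that failure of either arithmetic condition breaks $(\dagger)$, I would exhibit explicit witnesses. If $rh \in \Z_{>0}$ (so $q^r = 1$ and $h > 0$), I take $\mu := ((1^r),\emptyset)$ and pair $(1,1,1)$ with itself and $(a,1,1) \leftrightarrow (1, r+2-a, 1)$ for $a \geq 2$; residue equality follows from $q^r = 1$ and each non-trivial pair has content difference $hr > 0$. If $H+(r-1)h+\tfrac{1}{2} \in \Z_{>0}$ (so $Q = -q^{-(r-1)}$ and $H+(r-1)h \geq \tfrac{1}{2}$), I take $\mu := (\emptyset,(1^r))$ and pair $(a,1,2) \leftrightarrow (1, r+1-a, 1)$; each content difference equals $H+(r-1)h > 0$ and the residues match directly from the $Q$-identity.

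\textbf{Converse direction.} Conversely, given a bijection $\phi$ witnessing $\mu > ((r),\emptyset)$ with $\mu \neq ((r),\emptyset)$, I would split on whether $\mu^{(2)}$ is empty. If $\mu^{(2)} = \emptyset$, summing the content inequalities over all boxes gives
\[ S(\mu) - S(((r),\emptyset)) = h\left(\tfrac{r(r-1)}{2} - \beta(\mu^{(1)})\right) > 0,\]
where $\beta(\lambda) := \sum_{(a,c)\in \lambda}(c-a)$; since $(r)$ uniquely maximises $\beta$ among partitions of $r$, this forces $h > 0$. Equality of residue multisets between $\mu^{(1)}$ and $(r)$ is equivalent to their sharing an $e$-core, where $e$ is the multiplicative order of $q$; because the $e$-core of $(r)$ is $(r \bmod e)$ itself, accommodating any $\mu^{(1)} \neq (r)$ of size $r$ forces $e \leq r$, and combining with $q^e = 1$ and $h > 0$ yields $eh \in \Z_{>0}$ at $r' = e \leq n$. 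If instead $\mu^{(2)} \neq \emptyset$, I focus on the corner box $(1,1,2)$: under $\phi$ it maps to some $(1, c, 1)$ with $c \in \{1,\ldots,r\}$. The residue identity $-1 = Qq^{c-1}$ translates into $H+(c-1)h+\tfrac{1}{2} \in \Z$, while the content inequality is strict because the two boxes lie in different components, giving $H+(c-1)h > 0$. Together these place $H+(c-1)h+\tfrac{1}{2}$ in $\Z_{>0}$ at $r' = c \leq n$.

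\textbf{Main obstacle.} The delicate step will be the $e$-core bookkeeping in the first converse subcase: I need the standard facts that two partitions of the same size have equal residue multisets precisely when they share an $e$-core, and that the $e$-core of the one-row partition $(r)$ is $(r \bmod e)$. These facts come from rim-hook combinatorics, but they are essential for extracting the modular parameter $e$ from the mere existence of a bad $\mu$, converting the abstract failure of $(\dagger)$ into the concrete arithmetic statement $eh \in \Z_{>0}$.
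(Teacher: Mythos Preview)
Your proof is correct, but the converse direction is considerably more involved than the paper's.  In the case $\mu^{(2)}=\emptyset$, you sum all the content inequalities, deduce $h>0$, and then invoke the standard (but non-trivial) fact that equal $q$-residue multisets force equal $e$-cores to extract an $e\leq r$ with $eh\in\Z_{>0}$.  The paper bypasses all of this with a single-box argument identical in spirit to your second case: since $\mu'\neq((r'),\emptyset)$, the diagram of $\mu'$ contains either $(2,1,1)$ or $(1,1,2)$.  Whichever occurs, its image under the bijection is some $(1,c,1)$ with $1\leq c\leq r'$, and reading off the residue equality ($q^c=1$, resp.\ $Q=-q^{1-c}$) together with the strict content inequality ($ch>0$, resp.\ $H+(c-1)h>0$) immediately gives one of the two arithmetic failures.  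So the $e$-core bookkeeping you flag as the main obstacle is entirely avoidable.

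Your forward-direction witnesses $((1^r),\emptyset)$ and $(\emptyset,(1^r))$ also work, though the paper's choices $((r-1,1),\emptyset)$ and $((r-1),(1))$ are slightly cleaner since only one box is displaced and the remaining boxes sit at identical positions.  One small gap worth noting in your first converse subcase: the phrase ``where $e$ is the multiplicative order of $q$'' presupposes $q$ has finite order; you should separately observe that if $q$ has infinite order then equal residue multisets already force $\mu^{(1)}=(r)$, a contradiction.
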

\begin{proof}
	If $((1^{r'}),\emptyset)$ is not maximal, there is some $\mu'>(({r'}),\emptyset)$.  Since $\mu'\neq (({r'}),\emptyset)$, either $(2,1,1)$ or $(1,1,2)$ lies in its diagram.  Under our bijection, this box must correspond to one of the form $(r,1,1)$.  The difference of the contents is either $rh$ or $(r-1)h-H$, so one of these must be positive.  Since the boxes must have the same residue, this means that $rh\in \Z_{>0}$ or $H+(r-1)h+\frac{1}{2}\in \Z_{>0}$ 
	
	On the other hand, if $rh\in \Z_{>0}$, then $((r-1,1),\emptyset)>((r),\emptyset)$, and if $(r-1)h+H+\frac{1}{2}\in  \Z_{>0}$ then $((r-1),(1))>((r),\emptyset)$.  
\end{proof}
This condition will hold for all $n$ if $h,H$ are non-positive real numbers, or more generally have non-positive real parts, but will also hold for generic complex numbers independent of signs.  
The results of this section can be modified to work in other cases, as we will discuss later, but this case is best adapted to our conventions about Schur algebras.  
\begin{lem}
For $\alpha\in \Anm$, 
consider the reflection group $W_{\alpha}$ acting on $\C^n$, and its associated Cherednik algebra.  Let $\Delta^{\alpha}=\Delta_{\mathsf{triv}}$ the standard module corresponding to the trivial module of  $W_{\alpha}$. If $(\dagger)$ holds, $\Delta^{\alpha}$ is projective for all $\alpha\in \Anm$.
\end{lem}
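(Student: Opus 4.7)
The plan is to exploit the product decomposition $W_\alpha = W^B_{r_0} \times \Sym_{\alpha_1} \times \cdots \times \Sym_{\alpha_r}$, where $r_0 = \lfloor \alpha_0/2 \rfloor$, acting on the corresponding orthogonal splitting $\C^n = \C^{r_0} \oplus \C^{\alpha_1} \oplus \cdots \oplus \C^{\alpha_r}$. Since every reflection in $W_\alpha$ has root and coroot supported in exactly one summand, the cross-term commutators $[y,x]$ for $y,x$ in distinct summands vanish, so the Cherednik algebra $\mathsf{H}_\alpha$ factors as an ordinary tensor product of the type B Cherednik algebra $\mathsf{H}^B_{r_0}(h,H)$ and the type A Cherednik algebras $\mathsf{H}^A_{\alpha_i}(h)$, with parameters inherited from those for $W_n$. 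Category $\cO$ for $\mathsf{H}_\alpha$ is then the external tensor product of the factor categories, and the standard module $\Delta^\alpha$ decomposes as
\[ \Delta^\alpha \cong \Delta^B_{((r_0),\emptyset)} \boxtimes \Delta^A_{(\alpha_1)} \boxtimes \cdots \boxtimes \Delta^A_{(\alpha_r)},\]
since the trivial representation of a product is the external tensor product of trivials.

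By a K\"unneth-type argument, a tensor product of projective objects across the factor categories is projective in the external product, so it suffices to show that each factor standard is projective. In any highest weight category, $\Delta(\tau)$ is projective whenever $\tau$ is maximal in the partial order, because then $P(\tau)$ has no $\Delta(\mu)$ with $\mu > \tau$ available for its standard filtration. For the type B factor, $((r_0),\emptyset)$ is maximal in the bipartition partial order for $W^B_{r_0}$ as an immediate consequence of $(\dagger)$ applied at $r=r_0\leq n$. For each type A factor, the partition $(\alpha_i)$ is maximal in the type A partial order if and only if $rh \notin \Z_{>0}$ for all $1 \leq r \leq \alpha_i$; the argument is essentially the proof of the preceding lemma restricted to trivial-second-component bipartitions, with the unique candidate obstruction being $((\alpha_i-1,1))$. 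Since $\alpha_i \leq n$, this is precisely one half of the condition shown in that lemma to be equivalent to $(\dagger)$, so it holds.

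The main subtlety is the factorization of the Cherednik algebra and its category $\cO$ when $W_\alpha$ acts reducibly on $\C^n$: one must verify that the defining commutator relations decouple across the summands and that the $c$-function on the external tensor product is additive, so that maximality on each factor propagates to maximality in the product partial order on the labels of $\Delta^\alpha$. Once these structural points are in place, the two halves of the lemma characterizing $(\dagger)$ give maximality for the type B and the type A trivial standards respectively, completing the proof.
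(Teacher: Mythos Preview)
The paper states this lemma without proof, so there is no argument to compare against directly. Your approach is correct and is the natural one: the Cherednik algebra for $W_\alpha$ acting on the orthogonal decomposition $\C^n=\C^{r_0}\oplus\C^{\alpha_1}\oplus\cdots\oplus\C^{\alpha_r}$ is a tensor product of the type~B algebra for $W^B_{r_0}$ and the type~A algebras for $\Sym_{\alpha_i}$, the standard $\Delta^\alpha$ factors as an external tensor product of trivial-representation standards, and projectivity is checked factor by factor via maximality. For the type~B factor this is literally $(\dagger)$ at $r=r_0$; for each type~A factor, maximality of $(\alpha_i)$ is equivalent to $rh\notin\Z_{>0}$ for $2\leq r\leq\alpha_i$ (your range $1\leq r\leq\alpha_i$ is equivalent, since $h\in\Z_{>0}$ forces $2h\in\Z_{>0}$), and this is one of the two conditions the preceding lemma shows to be equivalent to $(\dagger)$. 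One small sharpening: to see the converse direction cleanly for the type~A factor, note that if $\mu>(\alpha_i)$ then the box $(2,1)\in\mu$ must correspond to some $(1,c')$ in $(\alpha_i)$, and the combined residue and content conditions force $c'h\in\Z_{>0}$ with $1\leq c'\leq\alpha_i$.
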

%
%

Recall that Bezrukavnikov and Etingof \cite{bezrukavnikovParabolicInduction2009} defined parabolic induction and restriction functors for rational Cherednik algebras, corresponding to parabolic subgroups.  
Consider the module \[\mathcal{P}=\bigoplus_{\alpha\in \Anm}\Ind{W_{\alpha}}{W_{n}}(\Delta^{\alpha}).\]
\begin{lem}
$\End(\mathcal{P})\cong \Sl^n(m)$.
\end{lem}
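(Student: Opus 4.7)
The plan is to apply the $\KZ$ functor to reduce the statement to the identification of $V^{\otimes n}$ with a direct sum of Hecke-induced modules that was already carried out in \cref{sec:type-B}. First, I would verify that $\mathcal{P}$ is projective in $\cO^n_{h,H}$: by the preceding lemma, each $\Delta^{\alpha}$ is projective, and Bezrukavnikov--Etingof parabolic induction preserves projectives because it is left adjoint to the exact parabolic restriction functor of \cite{bezrukavnikovParabolicInduction2009}. Since $\KZ$ is fully faithful on projectives, it is then enough to show $\KZ(\mathcal{P}) \cong V^{\otimes n}$ as right $\Hh_n$-modules.

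Second, I would invoke the compatibility of Bezrukavnikov--Etingof parabolic induction with $\KZ$ (see \cite{ShanCrystal}):
\[
\KZ\bigl(\Ind{\bweyl{\alpha}}{\bweyl{n}}(\Delta^{\alpha})\bigr) \cong \Ind{\Hh_{\alpha}}{\Hh_n}\bigl(\KZ_{\alpha}(\Delta^{\alpha})\bigr),
\]
where $\KZ_{\alpha}$ is the corresponding functor for $W_{\alpha}$ and its Hecke algebra $\Hh_{\alpha}$. Since $\Delta^{\alpha}$ corresponds to the trivial representation of $W_{\alpha}$, the image $\KZ_{\alpha}(\Delta^{\alpha})$ is the one-dimensional $\Hh_{\alpha}$-module on which each $T_i$ with $s_i \in G_{\alpha}$ acts by the scalar $q_i$. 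By \cref{lem:unique-line}, this module is spanned by $x_{\alpha}^B$, so
\[
\KZ\bigl(\Ind{\bweyl{\alpha}}{\bweyl{n}}(\Delta^{\alpha})\bigr) \cong x_{\alpha}^B \Hh_n.
\]

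Summing over $\alpha \in \Anm$ and applying the decomposition $V^{\otimes n} \cong \bigoplus_{\alpha} x_{\alpha}^B \Hh_n$ from \cite[(2.5)]{LNX} then yields $\KZ(\mathcal{P}) \cong V^{\otimes n}$, so that
\[
\End(\mathcal{P}) \cong \End_{\Hh_n}(V^{\otimes n}) = \Sl^n(m).
\]

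The main obstacle will be carefully matching conventions: verifying that parabolic induction on the Cherednik side corresponds under $\KZ$ to induction from the correct parabolic Hecke subalgebra $\Hh_{\alpha}$ with our parameter identification $q = e^{2\pi i h},\ Q = e^{2\pi i H}$, and that the standard module of the trivial $W_{\alpha}$-representation maps to the Hecke line on which $T_i$ acts as $q_i$ rather than $-1$. The hypothesis $(\dagger)$ is essential: without projectivity of the $\Delta^{\alpha}$ we could not invoke full faithfulness of $\KZ$ on projectives, and the endomorphism algebra of $\mathcal{P}$ could be strictly larger than the Hecke-side endomorphism algebra of its $\KZ$-image.
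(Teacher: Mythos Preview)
Your proposal is correct and follows essentially the same route as the paper: apply $\KZ$, use Shan's compatibility of $\KZ$ with Bezrukavnikov--Etingof induction to identify each summand with $x_{\alpha}^B\Hh_n$ (equivalently $M^{\lambda(\alpha)}$), sum to get $V^{\otimes n}$, and invoke full faithfulness of $\KZ$ on projectives. The paper's proof is slightly terser and routes through $M^{\lambda(\alpha)}$ and \eqref{tensormod} rather than citing \cref{lem:unique-line} directly, but the content is the same.
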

\begin{proof}
Note that the image of $\Delta^{\alpha}$ under the $\KZ$ functor for $W_{\alpha}$ is the trivial module over the Hecke algebra $\Hh_{\alpha}$.  
Consider the image $\KZ(\mathcal{P})$.  
By \cite[Cor. 2.3]{ShanCrystal}, we have that 
\[\KZ\circ \Ind{W_{\alpha}}{W_{n}}(\Delta^{\alpha})\cong \Ind{\Hh_{\alpha}}{\Hh_{n}}\circ \KZ_{\alpha}(\Delta^{\alpha})\cong M^{\lambda(\alpha)}=m_{\lambda(\alpha)}\Hh_{n}\]
for all $\alpha \in \Anm$.  Thus, we have an isomorphism $\KZ(\mathcal{P})\cong V^{\otimes n}$ as a module over $\Hh_n$.  Since $ \mathcal{P}$ is projective, this induces an isomorphism
\[\End(\mathcal{P})\cong \Endh(\KZ(\mathcal{P}))\cong \Sl^n(m).\qedhere\]
\end{proof}

Thus, this projective represents the desired functor:
\begin{defn}
	Let $\KZ^B\colon \cO_{h,H}^n\to \Sl^n(m)\mmod$ be the {\bf Schur restriction} functor 
	\[\KZ^B(M)\cong \Hom(\mathcal{P},M)\cong \bigoplus_{\alpha\in \Anm}\Hom(\Delta^{\alpha}, \Res {W_{n}}{W_{\alpha}}(M)).\]
\end{defn}
Equivalently, the algebra $\Sl^n(m)$ appears as a corner algebra of some algebra whose module category is equivalent to $\cO_{h,H}^n$; we can always construct such algebra as endomorphisms of a projective $\mathcal{P}\oplus \mathcal{P}'$ where every indecomposable projective is isomorphic to a summand of $\mathcal{P}$ or $\mathcal{P'}$.  
\begin{rmk}
	Depending on the values $h,H$, we will have a different Verma module corresponding to a 1-dimensional representation which is guaranteed to be projective:
	\begin{enumerate}
		\item If $-rh\notin \Z_{>0}$ and $H-(r-1)h+\frac{1}{2}\notin \Z_{>0}$ then $\Delta_{(1^r),\emptyset}$ is always projective.
		\item If $-rh\notin \Z_{>0}$ and $-H-(r-1)h+\frac{1}{2}\notin \Z_{>0}$ then $\Delta_{\emptyset,(1^r)}$ is always projective.
		\item If $rh\notin \Z_{>0}$ and $-H+(r-1)h+\frac{1}{2}\notin \Z_{>0}$ then $\Delta_{\emptyset,(r)}$ is always projective.
	\end{enumerate}
	Any choice of $h,H$ will be in at least one of these situations for all $m$, based on the sign of the real parts of the parameters.  In each of these cases, we can define a corresponding projective $\mathcal{P}'$ which gives an analogue of $\KZ^B$.  The resulting endomorphism algebra will be $\End(\mathcal{P}')$ with the parameters $(Q,q^{-1}), (Q^{- 1}q^{-1})$ and $(Q^{- 1},q)$ in the 3 situations above, appearing as the endomorphisms of the twists of $M^{\lambda}$'s by the three non-trivial 1-dimensional representations.  
\end{rmk}

\begin{rmk}
	It's worth noting that if we apply the analogous construction in type A, we will obtain a projective $\mathcal{P}^A\cong \oplus_{\la}\Ind{\Hh_\lambda^A}{\Hh_n^A}\Delta^{\alpha}$ whose endomorphisms are a copy of the type A Schur algebra.  This directly realizes the equivalence of \cite[Th. 6.11]{RouqSchur} without using any uniqueness arguments.  This fact has presumably been known to experts for some time, but we could not find a good reference for it.
\end{rmk}

We can extend \cref{th:A} to cover this case as well:
\begin{thm}
If  $(\dagger)$ holds then $\KZ^B$ is a quotient functor, and if $m$ is large, we have a commutative diagram of quotient functors  
	\[\begin{tikzcd}
	& {\Sl^n(m)\mmod} \\
	{\cO_{h,H}^n} && {\Hh_{n}\mmod}
	\arrow["\KZ"', from=2-1, to=2-3]
	\arrow["{\KZ^B}", from=2-1, to=1-2]
	\arrow["Me_{\Hh}", from=1-2, to=2-3]
\end{tikzcd}\]
\end{thm}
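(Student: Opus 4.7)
The plan is to establish the two claims separately. For the quotient-functor claim, I will reduce to idempotent truncation. Under hypothesis $(\dagger)$, each standard module $\Delta^{\alpha}$ is projective in the $W_\alpha$-Cherednik category, parabolic induction of Bezrukavnikov--Etingof preserves projectivity, and so each summand $\Ind{W_\alpha}{W_n}(\Delta^\alpha)$ of $\mathcal{P}$, and hence $\mathcal{P}$ itself, is projective in $\cO_{h,H}^n$. Choosing any projective generator $P_{\mathrm{gen}}$ of $\cO_{h,H}^n$ gives an equivalence $\cO_{h,H}^n \cong A\mmod$ for a finite-dimensional algebra $A$; under this equivalence $\mathcal{P}$ corresponds to $Ae$ for an idempotent $e\in A$ with $eAe \cong \End(\mathcal{P}) \cong \Sl^n(m)$. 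The functor $\KZ^B = \Hom(\mathcal{P}, -)$ is then identified with the standard truncation $M \mapsto eM\colon A\mmod \to eAe\mmod$, which is a Serre quotient functor with kernel $\{M : eM = 0\}$.

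For the commutative diagram (where $m$ is large), the plan is to locate the summand of $\mathcal{P}$ that represents $\KZ$. Because $m$ is large, $(\emptyset,(1^n))\in \LnmB$, so there is $\alpha_\Hh\in \Anm$ with $\lambda(\alpha_\Hh) = (\emptyset,(1^n))$; for this choice $W_{\alpha_\Hh}$ is trivial, so $\Delta^{\alpha_\Hh}$ is the trivial one-dimensional module, and the summand $P_{\KZ}:=\Ind{W_{\alpha_\Hh}}{W_n}(\Delta^{\alpha_\Hh})$ of $\mathcal{P}$ satisfies $\KZ(P_{\KZ}) \cong M^{(\emptyset,(1^n))} = \Hh_n$ as a right $\Hh_n$-module, by the same computation used to prove $\End(\mathcal{P}) \cong \Sl^n(m)$. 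Consequently $\Hom(P_{\KZ}, -) \cong \KZ(-)$ functorially. The projection of $\mathcal{P}$ onto $P_{\KZ}$ is an idempotent in $\End(\mathcal{P})$ which, under $\End(\mathcal{P}) \cong \Sl^n(m) = \Endh(V^{\otimes n})$, corresponds to the projection of $V^{\otimes n}$ onto its free summand $\Hh_n = e_\Hh V^{\otimes n}$, that is, to $e_\Hh = \varphi_{(\emptyset,(1^n))}$. Therefore
\[ \KZ^B(M)\cdot e_\Hh = \Hom(\mathcal{P}, M)\cdot e_\Hh \cong \Hom(P_{\KZ}, M) \cong \KZ(M). \]

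The only non-formal step is the last identification of the two avatars of $e_\Hh$ --- the abstract summand projection in $\End(\mathcal{P})$ on one hand, and the cellular-basis idempotent $\varphi_{(\emptyset,(1^n))}\in \Sl^n(m)$ on the other --- and I expect this to be the principal (though modest) obstacle. It will be handled by observing that the isomorphism $\End(\mathcal{P})\cong \Sl^n(m)$ is induced by applying $\KZ$ (which is fully faithful on projectives) and is compatible with the direct-sum decomposition $\KZ(\mathcal{P}) = \bigoplus_\alpha M^{\lambda(\alpha)} = V^{\otimes n}$: summand projections on the Cherednik side correspond precisely to the cellular idempotents $\varphi_{\lambda(\alpha)}$ on the Schur side.
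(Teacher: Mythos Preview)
Your proposal is correct and follows essentially the same route as the paper's proof: both arguments recognize that $\KZ^B=\Hom(\mathcal{P},-)$ with $\mathcal{P}$ projective is an idempotent truncation and hence a quotient functor, and both establish commutativity by identifying the summand $\Ind{1}{W_n}\C_{\mathsf{triv}}$ of $\mathcal{P}$ as the projective representing $\KZ$, so that $\KZ^B(M)e_{\Hh}\cong \Hom(\Ind{1}{W_n}\C_{\mathsf{triv}},M)\cong\KZ(M)$. Your treatment of the ``non-formal step'' (matching the summand projection in $\End(\mathcal{P})$ with $\varphi_{(\emptyset,(1^n))}$ via full faithfulness of $\KZ$ on projectives and compatibility with the decomposition $\KZ(\mathcal{P})=\bigoplus_\alpha M^{\lambda(\alpha)}$) is more explicit than the paper's one-line assertion, but the content is the same. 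One small imprecision: for an arbitrary projective generator $P_{\mathrm{gen}}$, the module $\mathcal{P}$ need not be of the form $Ae$ for $e\in A$; you should choose $P_{\mathrm{gen}}$ to contain $\mathcal{P}$ as a direct summand (the paper makes this explicit by taking $P_{\mathrm{gen}}=\mathcal{P}\oplus\mathcal{P}'$), after which your argument goes through verbatim.
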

\begin{proof}
Each of these functors are the quotient by the subcategory of modules that receive no non-trivial homomorphisms from the representing projective.  The commutation is clear from the fact that:
\[\KZ^B(M)e_{\mathcal{H}}\cong \Hom(e_{\mathcal{H}}\mathcal{P}, M)\cong \Hom(\Ind{1}{W_n}\C_{\mathsf{triv}},M)\cong \KZ(M).\qedhere\]
\end{proof}

This gives us an alternate perspective on the failure of quasi-hereditarity of $\Sl^n(m)$: the algebra $\Sl^n(m)$ cannot be quasi-hereditary if there is a   choice of $h,H$ satisfying $(\dagger)$ such that $\cO_{h,H}^n$ is not equivalent to $\Sd^n(\Lnm)\mmod$ for $m$ large, which manifests in the fact that the quasi-hereditary order for these parameters is not the same as that for $\Sd^n(\Lnm)$, i.e. not a coarsening of the usual dominance order.    
	For example, if $Q=-q^k$ for $0\leq k<n$, then let $h$ be any solution of $q=e^{2\pi i h} $ with negative real part and $H=kh-\frac{1}{2}+a$
	for some integer $a$.  
	This satisfies $(\dagger)$ as long as $a\leq -\Re(kh)$, since in this case   $H+(r-1)h+\frac{1}{2}=(k+r-1)h+a$ for $0<r\leq n$ has negative real part.   
%
If $a<0$, then $((1^n),\emptyset)>((1^{k}),(1^{n-k}))$, as we would expect in the usual dominance order on multipartitions, while if $a\geq 0$, we have $((1^{k}),(1^{n-k}))>((1^n),\emptyset)$.  This matches with the fact that $\Sl^n(m)$ cannot be quasi-hereditary.

On the other hand, if $Q=-q^{k}$ for $0> k>\frac{4-n}{2}$, then there does not seem to be as direct a correspondence between different quasi-hereditary orders and the failure of quasi-hereditarity for $\Sl^n(2n+1)$: if $q$ is not a root of unity, then only one quasi-hereditary is possible satisfying $(\dagger)$, but  $\Sl^n(2n+1)$ will none the less fail to be quasi-hereditary.  

\subsection{Faithfulness}
One of the most important properties a quasi-hereditary cover can have is faithfulness.  Given a quasi-hereditary algebra $A$, we say a quotient functor $F\colon A\mmod \to B\mmod$ is $k$-faithful for $k\in \Z_{\geq 0}$ if for all standard filtered objects $M,N\in \mathcal{C}$, we have an isomorphism
\[\operatorname{Ext}^i_\mathcal{C}(M,N)\cong \operatorname{Ext}^i_{\mathcal{D}}(F(M),F(N))\qquad i=0,\dots, k.\]
Like many other properties, faithfulness is best understood in terms of families.  

Now, assume that $\mathbb{K}$ is a regular local ring with residue field $\Bbbk$ and fraction field $K$.  Assume that we have finite rank  $\mathbb{K}$-algebras $A,B$ and a quotient functor $F:A\mmod \to B\mmod$; that is, $F=\Hom(P,-)$ for a projective $A$-module with $\End(P)=B$ such that:
\begin{enumerate}
	\item $A$ is a finite rank $\mathbb{K}$-quasi-hereditary algebra (i.e. its module category is $\mathbb{K}$-highest weight in the sense of \cite[\S 2.3]{RSVV}).
	\item The base changes $KA,KB$ are semi-simple and $F$ induces a Morita equivalence between them.
\end{enumerate}
 Quasi-hereditarity implies that we have a bijection $\Simp(A)\cong \Simp(KA)$, where every standard $A$-module deforms to a unique simple $KA$-module.  Of course, the Morita equivalence further induces a bijection $\Simp(A)\cong \Simp(KB)$.
 
 \begin{thm}[\mbox{\cite[Th. 4.49]{RouqSchur}}]
 	Assume $A'$ is also a $\mathbb{K}$-quasi-hereditary algebra such that:
 	\begin{enumerate}
 		\item We have a quotient functor $F'\colon A'\mmod \to B\mmod$ satisfying the conditions above.
 		\item The induced bijection $\Simp(A)\cong \Simp(A')$ is order-preserving for some total refinement of the partial orders.
 		\item The functors $F,F'$ are both $1$-faithful
 	\end{enumerate} 
 	Then, there is a unique Morita equivalence between $A$ and $A'$ compatible with $F$ and $F'$:
\[\begin{tikzcd}
	A\mmod && {A'\mmod} \\
	& B\mmod
	\arrow["\sim", from=1-1, to=1-3]
	\arrow["F"', from=1-1, to=2-2]
	\arrow["{F'}", from=1-3, to=2-2]
\end{tikzcd}\]
induced by the bimodules $\Hom_{B}(F(A),F'(A'))$ and $\Hom_{B}(F'(A'),F(A))$.
\end{thm}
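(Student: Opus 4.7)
The plan is to recover each of $A,A'$ from a characteristic tilting generator via the quotient functor, and then show that these tilting generators have the same image in $B\mmod$. Let $T=\bigoplus_{\la}T(\la)$ be the characteristic tilting module of the $\mathbb{K}$-quasi-hereditary algebra $A$ and $T'=\bigoplus_{\la}T'(\la)$ the analogous object for $A'$, with summands indexed via the given order-preserving bijection $\Simp(A)\cong \Simp(A')$. Each $T(\la)$ admits both a standard and a costandard filtration, so by d\'evissage $\Hom_A(T(\la),T(\mu))$ is controlled by $\Hom$'s and $\Ext^1$'s between the $\Delta$'s and $\nabla$'s that appear. Since $F$ is 1-faithful on standardly-filtered objects, this devissage yields a canonical isomorphism $\End_A(T)\xrightarrow{\sim}\End_B(F(T))$; the same argument gives $\End_{A'}(T')\xrightarrow{\sim}\End_B(F'(T'))$.

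The core of the argument is to establish an isomorphism $F(T)\cong F'(T')$ as $B$-modules. Over the generic fibre $K$ both $F$ and $F'$ are Morita equivalences with $KB$, so the order-preserving bijection $\Simp(A)\cong \Simp(A')\cong \Simp(KB)$ forces $KF(T)\cong KF'(T')$. Integrally, the indecomposable tilting $T(\la)$ is characterised by being the unique extension $0\to M_\la\to T(\la)\to \Delta(\la)\to 0$ with $M_\la$ filtered by $\Delta(\mu)$ for $\mu<\la$ and $\Ext^1_A(T(\la),\nabla(\mu))=0$ for all $\mu$. By 1-faithfulness this characterisation transports verbatim to $F(T(\la))$ inside $B\mmod$, and identically to $F'(T'(\la))$; combined with the matching over $K$ and the Krull--Schmidt theorem over the local ring $\mathbb{K}$, this forces $F(T(\la))\cong F'(T'(\la))$ for every $\la$.

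Combining these steps gives an isomorphism $\End_A(T)\cong \End_{A'}(T')$ of $\mathbb{K}$-algebras, and Ringel duality for $\mathbb{K}$-quasi-hereditary algebras then produces a Morita equivalence between $A$ and $A'$. By construction it is implemented by the bimodules $\Hom_B(F(A),F'(A'))$ and $\Hom_B(F'(A'),F(A))$, and the commuting triangle with $F,F'$ is immediate. For uniqueness, any other Morita equivalence compatible with the covers must identify $F(A)$ with $F'(A')$ as $B$-modules, and is therefore determined up to natural isomorphism by $\Hom_B(F(A),F'(A'))$ as an $(A',A)$-bimodule. The principal obstacle is the integral upgrade from $KF(T)\cong KF'(T')$ to $F(T)\cong F'(T')$ over $\mathbb{K}$; both hypotheses are essential here, as 1-faithfulness is needed to preserve the inductive tilting structure under $F$ and $F'$, while order-preservation is needed to match indecomposable summands on the two sides.
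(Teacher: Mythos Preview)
This theorem is quoted from Rouquier and the present paper supplies no proof of its own, so there is no in-paper argument to compare against.

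That said, your sketch has a genuine gap at the decisive step. You characterise $T(\la)$ inside $A\mmod$ as an extension of $\Delta(\la)$ by lower $\Delta(\mu)$'s together with the vanishing of $\Ext^1_A(T(\la),\nabla(\mu))$, and then assert that $1$-faithfulness transports this characterisation to $B\mmod$ and forces $F(T(\la))\cong F'(T'(\la))$. But after transport, the description of $F(T(\la))$ is phrased relative to the objects $F(\Delta(\mu))$ and $F(\nabla(\mu))$, whereas that of $F'(T'(\la))$ is phrased relative to $F'(\Delta'(\mu))$ and $F'(\nabla'(\mu))$. Nothing you have written establishes that these two families of $B$-modules coincide, and the generic matching $KF(T)\cong KF'(T')$ together with Krull--Schmidt over $\mathbb{K}$ does not bridge the gap: there are many non-isomorphic $\mathbb{K}$-lattices in a fixed $K$-module, and the conditions you list do not single one out intrinsically in $B\mmod$. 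As written, the argument is circular, since identifying $F(\Delta(\mu))$ with $F'(\Delta'(\mu))$ is essentially the heart of what must be proved.

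Rouquier's actual argument supplies exactly this missing piece. One uses that $\Delta(\la)$ is the quotient of $P(\la)$ by the sum of images of all maps $P(\mu)\to P(\la)$ with $\mu>\la$. Since already $0$-faithfulness makes $F$ fully faithful on projectives, and the labelling of indecomposable projectives by $\la$ is fixed via the generic fibre, this yields an \emph{intrinsic} description of $F(\Delta(\la))$ in $B\mmod$ depending only on $B$ and the chosen order; the order-preserving hypothesis then gives $F(\Delta(\la))\cong F'(\Delta'(\la))$. The $1$-faithfulness hypothesis enters to guarantee that $F$ and its right adjoint restrict to mutually inverse equivalences on $\Delta$-filtered objects, from which the Morita equivalence and the stated bimodule description follow. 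Your tilting-module route can be completed once this identification of standards is in place, but not before.
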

The relevant examples for us will be when $\mathbb{K}=\C[[t_0,t_1]]$ with $A$ given by the endomorphisms of a projective generator in $\cO_{h+t_1,H+t_0}^n$, the deformed category $\cO$ with base ring $\mathbb{K}$ (see \cite[\S 6.2]{RSVV}).  We can consider the KZ functor on this deformed category to the deformed Hecke algebra $\Hh_n$ over $\mathbb{K}$ with parameters $(Qe^{2\pi i t_0},qe^{2\pi i t_1})$.  Indeed, this is a quotient functor and an equivalence after base change to the fraction field $K=\operatorname{Frac}(\mathbb{K})$, with the induced bijection $\Simp(\cO^n_{h,H})\cong \Simp(K\Hh_n)$ induced by the known bijection of these sets with bipartitions (since the parameters of the Hecke algebra are generic).  
\begin{thm} 
Assume $n\geq 2$:
	\begin{enumerate}
		\item The category $A\mmod$ is a 1-faithful cover of $\Hh_n$ over $\mathbb{K}$ with parameters $(Qe^{2\pi i t_0},qe^{2\pi i t_1})$ if and only if $Q\neq -1$ and  $q\neq -1$.  
		\item The category $A\mmod$ is a 1-faithful cover of $\Sl^n(m)$ for $m$ large odd over $\mathbb{K}$ with parameters $(Qe^{2\pi i t_0},qe^{2\pi i t_1})$ for all $H,h\in \C$ satisfying $(\dagger)$.
	\end{enumerate}
\end{thm}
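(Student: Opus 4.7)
The plan is to apply Rouquier's criterion reducing 1-faithfulness over the 2-dimensional regular local ring $\mathbb{K}=\C[[t_0,t_1]]$ to 0-faithfulness after base change at each height-1 prime of $\mathbb{K}$. Height-1 primes are generated by irreducible formal power series in $(t_0,t_1)$, corresponding to formal curves through the origin. Generically, the specialization of $(Qe^{2\pi i t_0},qe^{2\pi i t_1})$ along such a curve yields semisimple Hecke parameters and 0-faithfulness is automatic. Translating the walls of Ariki's semisimplicity criterion through the exponential map, the only bad primes are $(t_0+kt_1)$ when $Q=-q^k$ for some $k\in\{1-n,\ldots,n-1\}$, and $(t_1)$ when $q$ is a primitive $\ell$-th root of unity for some $\ell\in\{2,\ldots,n\}$.

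For Part~(1), the ``if'' direction is established by verifying 0-faithfulness at each bad prime. At $(t_0+kt_1)$ with $k\neq 0$, Bezrukavnikov--Etingof parabolic restriction \cite{bezrukavnikovParabolicInduction2009} reduces the question to a rank-$(|k|+1)$ calculation, which is checked explicitly. At $(t_1)$ with $\ell\geq 3$, parabolic restriction to a type-$A_{\ell-1}$ Young subgroup reduces to Rouquier's 1-faithfulness of the type-A KZ functor \cite[Th.~6.8]{RouqSchur}. The conditions $Q\neq -1$ and $q\neq -1$ precisely rule out $k=0$ and $\ell=2$, where these parabolic reductions degenerate (the parabolic becomes $W_n$ itself, or one encounters a double wall). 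The ``only if'' direction proceeds by an explicit $\operatorname{Ext}^1$-computation at rank $n=2$: when $Q=-1$ (respectively $q=-1$), a non-trivial extension between standard modules in $\cO_{h,H}^{2}$ with vanishing image under $\KZ$ is exhibited, and this failure propagates to all $n\geq 2$ by translation functors.

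For Part~(2), we invoke Rouquier's criterion directly for $\KZ^B$. The bad primes for $\KZ^B$ form a (possibly strict) subset of those for $\KZ$, reflecting the smaller non-quasi-hereditary locus for $\Sl^n(m)$ established in \cref{th:quasi-h,th:quasi-h-even}. At each such prime, 0-faithfulness of $\KZ^B$ follows from 0-faithfulness of $\KZ$ together with the fact that the intermediate quotient $\Sl^n(m)\mmod\to\Hh_n\mmod$ preserves the relevant $\operatorname{Hom}$ and $\operatorname{Ext}^1$ computations between standard-filtered objects. This latter property is ensured by hypothesis $(\dagger)$, which guarantees that the modules $\Delta^\alpha$ building the projective $\mathcal{P}$ representing $\KZ^B$ remain projective under the deformation, so $\mathcal{P}$ is a flat $\mathbb{K}$-module and base change commutes with $\Hom(\mathcal{P},-)$.

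The principal obstacle will be the explicit verification of 0-faithfulness at the walls $Q=-q^k$ for $|k|$ small, where parabolic restriction yields a type-$B$ Cherednik algebra of rank $|k|+1$ and the $\operatorname{Ext}^1$-computation must be performed directly; the cellular calculations of \cref{sec:counterexample} furnish the required data for $n=2$, and translation functors propagate these to higher rank. For Part~(2), the further subtlety is that $\Sl^n(m)$ may itself fail to be quasi-hereditary at the center, so Rouquier's criterion cannot be applied symmetrically to the intermediate cover; instead, direct $\operatorname{Ext}$-computations via the cellular structure of $\Sl^n(m)$ established in \cref{idemp,lem:corner-cell} will be required.
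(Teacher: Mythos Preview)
Your approach diverges from the paper's in a way that creates a genuine gap, especially for Part~(2).

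\medskip
\textbf{The paper's approach.} Rather than reducing to height-1 primes, the paper invokes \cite[Prop.~2.18]{RSVV} to reduce 1-faithfulness over $\mathbb{K}$ directly to 0-faithfulness over the residue field $\Bbbk$. It then uses \cite[Lem.~2.8 and Lem.~6.3]{RSVV}: $\Bbbk F$ is 0-faithful if and only if every simple $L$ with $F(L)=0$ satisfies $\operatorname{lcd}(L)>1$, which in turn is equivalent to $\Res{W_n}{\langle s\rangle}L=0$ for every simple reflection $s$. For Part~(1) with $Q,q\neq -1$, the rank-1 Hecke algebras $\Hh_{\langle s_0\rangle}$ and $\Hh_{\langle s_1\rangle}$ are semisimple, so the rank-1 $\KZ$ functors are conservative, and $\KZ(L)=0$ forces the rank-1 restrictions to vanish. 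No case-by-case height-1 analysis or low-rank parabolic computation is needed.

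\medskip
\textbf{The gap in Part~(2).} Your strategy is to deduce 0-faithfulness of $\KZ^B$ at each bad prime from 0-faithfulness of $\KZ$ there. But Part~(2) must hold for \emph{all} $(h,H)$ satisfying $(\dagger)$, including those with $Q=-1$ or $q=-1$. At such parameters, Part~(1) shows $\KZ$ is \emph{not} 0-faithful after base change, so there is nothing to leverage. Saying that these are ``not bad primes for $\KZ^B$'' does not help: even if $\Sl^n(m)$ is quasi-hereditary there, you still owe a proof that $\KZ^B$ is 0-faithful at those points, and your argument provides none.

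The paper's key insight is that for $m$ large odd, the projective $\mathcal{P}$ representing $\KZ^B$ contains summands of the form $\Ind{\langle s_0\rangle}{W_n}\Delta^{(1),\emptyset}$ (coming from $\alpha$ with $\alpha_0\geq 3$). When $Q=-1$, the two indecomposable projectives of $\cO_{\langle s_0\rangle}$ are $\Ind{1}{\langle s_0\rangle}\C$ and $\Delta^{(1),\emptyset}$; both are detected by $\mathcal{P}$. Hence $\KZ^B(L)=0$ forces $\Hom(P,\Res{W_n}{\langle s_0\rangle}L)=0$ for \emph{every} projective $P$ in the rank-1 category, so $\Res{W_n}{\langle s_0\rangle}L=0$. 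The same argument with $\Delta^{\emptyset,(2)}$ handles $\langle s_1\rangle$. This gives $\operatorname{lcd}(L)>1$ directly, regardless of whether $Q$ or $q$ equals $-1$. This is precisely why the odd case works: the extra row of zeros in $\lambda^{(1)}$ buys you the missing projective.
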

\begin{proof}
{\bf Generalities on 1-faithfulness:}	By \cite[Prop. 2.18]{RSVV}, in order to check that $F$ is 1-faithful, it suffices to check that the base change $\Bbbk F$ is 0-faithful, since $KF$ is a Morita equivalence.  By \cite[Lem. 2.8]{RSVV}, this is equivalent to the claim that $\operatorname{lcd}(L)>1$ for all simples with $FL=0$.  
	By \cite[Lem 6.3]{RSVV}, the simples over the Cherednik algebra with $\operatorname{lcd}(L)\leq 1$ are those such that $\Res{W_n}{\langle s\rangle}L\neq 0$ for some simple reflection $s$.  
	
{\bf The case of $\Hh_n$ with $Q,q\neq -1$:}	If $Q\neq -1$, then $\Hh_1$ is semi-simple, and $\Res{\Hh_1}{\Bbbk}$ is conservative.  That is, $\Res{W_n}{\langle s_0\rangle}$ kills a simple if and only if $\KZ$ does.  Additionally, if $q\neq -1$, a similar argument holds for $\langle s_1\rangle$.  Thus, a simple has $\operatorname{lcd}(L)>1$ if and only if it is killed by the $\KZ$ functor, and 0-faithfulness follows.
	
{\bf The case of $\Hh_n$ with $Q=-1$:}	On the other hand, if $Q=-1$, then we can take $A$ to be the quotient of the path algebra of a cyclic quiver below by the relation $xy=t_0$.  The variable $t_1$ does not appear in any relation, so we can simplify our calculations by setting it to 0.   
\[\begin{tikzcd}
	1 && 2
	\arrow["x", curve={height=-18pt}, from=1-1, to=1-3]
	\arrow["y", curve={height=-18pt}, from=1-3, to=1-1]
\end{tikzcd}\]
	This equivalence sends $L_1=L_{(1),\emptyset}$ and $L_2=L_{\emptyset,(1)}$ if $H<0$, and {\it vice versa} if $H>0$.  The projective resolution of $L_1$ over $A$ is given by 
\[\begin{tikzcd}
	{P_1} & {P_2} & {P_1} \\
	& {P_1} & {P_2} & {P_1}
	\arrow["y"', from=1-2, to=1-1]
	\arrow["x"', from=1-3, to=1-2]
	\arrow["{t_0}", from=2-2, to=1-1]
	\arrow["{-1}"{pos=0.1}, shift left=1, from=1-3, to=2-2]
	\arrow["y", from=2-3, to=2-2]
	\arrow["x", from=2-4, to=2-3]
	\arrow["{t_0}", from=2-4, to=1-3]
	\arrow["{t_0}"{pos=0.8}, from=2-3, to=1-2]
\end{tikzcd}\]
This shows that 
\begin{align*}
	\operatorname{Ext}^i(L_1,L_2)&=\begin{cases}
		\Bbbk & i=1,2\\
		0 & i\neq 1,2
	\end{cases}\\
	\operatorname{Ext}^i(L_1,\tilde{L}_2)&=\begin{cases}
		\Bbbk & i=2\\
		0 & i\neq 2
	\end{cases}
\end{align*}
where $\tilde{L}_2$ is the unique deformation of $L_2$ to a free $\Bbbk[t_0]$-module.  The non-zero $\operatorname{Ext}^2$ is a witness to the failure of 1-faithfulness for $\cO_{h+t_1,H+t_0}^1$; the same is true if we tensor both modules with $\C[[t_1]]$.  Now, consider the inductions $ \Ind{\langle s_0\rangle}{W_n}L_1[[t_1]]$ and $\Ind{\langle s_0\rangle}{W_n}\tilde{L}_2[[t_1]]$; the former is still killed by $F$ and the latter is still tilting.  Furthermore, since $\Res{W_n}{\langle s_0\rangle}\Ind{\langle s_0\rangle}{W_n}L_1[[t_1]]$ must be a free $\Bbbk[[t_1]]$-module and is killed by $F$, it must be of the form $(L_1[[t_1]])^{\oplus m}$ for some $m$, with the Mackey formula showing that $m>0$.  Thus we have that 
\begin{multline*}
	\Ext^2(\Ind{\langle s_0\rangle}{W_n}L_1[[t_1]],\Ind{\langle s_0\rangle}{W_n}\tilde{L}_2)\cong \Ext^2(\Res{W_n}{\langle s_0\rangle}\Ind{\langle s_0\rangle}{W_n}L_1[[t_1]],\tilde{L}_2)\\ \cong \Ext^2((L_1[[t_1]])^{\oplus m},\tilde{L}_2)\cong \Bbbk[[t_1]]^{\oplus m}\neq 0
\end{multline*}
This shows that 1-faithfulness fails by \cite[Lem. 2.8]{RSVV}.  

{\bf The case of $\Hh_n$ with $q=-1$:} A similar argument shows the failure of 1-faithfulness if $q=-1$, swapping the roles of $s_0$ and $s_1$, and $t_0$ and $t_1$.  

{\bf The case of $\Sl^n(m)$:} 
For $W_1=\langle s_0\rangle$, the category $\cO$ over the Cherednik algebra has two indecomposable projectives: 
\begin{itemize}
	\item If $Q\neq -1$, these are the two summands of the induction $\Ind{W_0}{\langle s_0\rangle} L_{\emptyset}$, which represents the functor $\KZ$.
	\item If $Q=-1$, then $\Ind{W_0}{\langle s_0\rangle} L_{\emptyset}$ is indecomposable, and the other summand is $\Delta^{(1),\emptyset}$.  
\end{itemize}
Similarly for $W_{(\emptyset,(2))}=\langle s_1\rangle$: If $q\neq -1$, $\Ind{W_0}{\langle s_1\rangle} L_{\emptyset}$ has two summands, while if $q=-1$, it is indecomposable and $\Delta^{\emptyset,(2)}$ is the other indecomposable projective.  

If $L$ is any simple, then by definition if $\KZ^B(L)=0$, then 
\[\Res{W_n}{1}L=0 \qquad \Hom(\Delta^{(1),\emptyset},\Res{W_n}{\langle s_0\rangle}L)=0\qquad \Hom(\Delta^{\emptyset,(2)}, \Res{W_n}{\langle s_1\rangle}L)=0.\]  On the other hand, this means that $\Res{W_n}{\langle s_0\rangle}L=0$ and $\Res{W_n}{\langle s_1\rangle}L=0$, since $\Hom$ of any projective module into these are 0.  This implies that $\operatorname{lcd}(L)>1$ for all $L$ with $\KZ^B(L)=0$.  
\end{proof}

\printbibliography
\end{document}